\newtheorem{theorem}{Theorem}[section]
\newtheorem{proposition}{Proposition}[section]
\newtheorem{remark}{Remark}[section]
\newtheorem{lemma}{Lemma}[section]
\numberwithin{equation}{section}
\def\p{\partial}
\def\vr{\varrho}
\def\d{\mathrm{d}}
\def\no{\nonumber}
\def\R{\mathbb{R}}
\def\eps{\varepsilon}
\def\l{\langle}
\def\r{\rangle}
\def\T{\mathbb{T}}
\newcounter{wronumber}\setcounter{wronumber}{1}
\begin{document}
\title[Models on engineered {\em Escherichia coli} populations]
 { On kinetic and macroscopic models for the stripe formation in engineered bacterial populations}

\author[Ning Jiang]{Ning Jiang}
\address[Ning Jiang]{\newline School of Mathematics and Statistics, Wuhan University, Wuhan, 430072, P. R. China}
\email{njiang@whu.edu.cn}

\author[Jiangyan Liang]{Jiangyan Liang}
\address[Jiangyan Liang]{\newline School of Mathematics and Statistics, Wuhan University, Wuhan, 430072, P. R. China}
\email{ljymath@whu.edu.cn}

\author[Yi-Long Luo]{Yi-Long Luo}
\address[Yi-Long Luo]
{\newline School of Mathematics, South China University of Technology, Guangzhou, 510641, P. R. China}
\email{luoylmath@scut.edu.cn}

\author[Min Tang]{Min Tang}
\address[Min Tang]
{\newline School of Mathematics, Institute of Natural Sciences and MOE-LSC, Shanghai Jiao Tong University, Shanghai 200240, China}
\email{tangmin@sjtu.edu.cn}

\author[Yaming Zhang]{Yaming Zhang}
\address[Yaming Zhang]
{\newline School of Mathematics, Institute of Natural Sciences and MOE-LSC, Shanghai Jiao Tong University, Shanghai 200240, China}
\email{zymsasj@sjtu.edu.cn}

\thanks{\today}

\begin{abstract}
We study the well-posedness of the biological models with AHL-dependent cell mobility on engineered {\em Escherichia coli} populations. For the kinetic model proposed by Xue-Xue-Tang recently \cite{XXT-2018-CB}, the local existence for large initial data is proved first. Furthermore, the positivity and local conservation laws for density $\rho (t,x,z)$ and nutrient $n(t,x)$ with initial assumptions \eqref{IC-K2} and \eqref{IC-Average} are justified. Based on these properties, it can be extended globally in time near the equilibrium $(0,0,0)$. Considering the asymptotic behaviors of faster response CheZ turnover rate (i.e., $\eps \rightarrow 0$), one formally derives an anisotropic diffusion engineered Escherichia coli populations model (in short, AD-EECP) for which we find a key extra a priori estimate to overcome the difficulties coming from the nonlinearity of the diffusion structure. The local well-posedness and the positivity and local conservation laws for density and nutrient of the AD-EECP are justified. Furthermore, the global existence around the steady state $(\vr_a, h_a, 0)$ with $\vr_a \in [0, \Lambda_b)$ is obtained. \\

 \noindent\textsc{Keywords.} K-EECP and AD-EECP model; positivity; conservation laws; density-dependent mobility; $z$-flux term; nonlinearity of diffusion structure. \\

 \noindent\textsc{MSC2020.} 35A01, 35B45, 35Q92, 35A09, 35A23.
\end{abstract}

\maketitle

\section{Introduction}\label{Sec-Intro}

\subsection{PDE models on engineered {\em Escherichia coli} populations}

It is a fundamental problem in developmental biology to understand the formation of regularly spaced structures, such as vertebrate segments, hair follicles, fish pigmentation and animal coats \cite{BGCAW-2005, BSM-2009, EL-2010, PRL2012, H-1980, M-2002, MV-2009}.  These patterns involve the complex interaction of intracellular signaling, cell-cell communication, cell growth and cell migration. The overwhelmingly complex physiological context usually makes it difficult to uncover the interplay of these mechanisms. Synthetic biology has recently been used to extract essential components of complex biological systems and examine potential strategies for pattern formation.

One of these problems relate to the bacterium {\em Escherichia coli}. Recently in \cite{LFLRCL-2011-S}, the chemotaxis signaling pathway of {\em E. coli} has been engineered and coupled with a quorum sensing module, leading to cell-density suppressed cell motility. When a suspension of the engineered cells is inoculated at the center of a petri dish with semi-solid agar and rich nutrient, the colony grows, moves outward and sequentially establishes rings or ``stripes" with a high density of cells behind the colony front. These spatial patterns form in a strikingly similar way
as many periodic patterns in other biological systems. When the maximum density of the motile front reaches a threshold, an immotile zone is nucleated. The immotile zone then absorbs bacteria from its neighborhood to expand, forming alternating high and low density zones. These patterns do not form when using wild-type {\em E. coli}; instead, the colony simply expands outward and forms a uniform lawn.

In \cite{LFLRCL-2011-S}, the quorum-sensing module of bacterium  was used to control the transcription of {CheZ}. The engineered cell synthesizes and secretes acyl-homoserine lactone (AHL), a small molecule that is freely diffusible across the cell membrane and degrades rapidly. At high concentrations, AHL suppresses the transcription of { CheZ} in an ultra-sensitive manner. If {CheZ} is suppressed, CheZ protein becomes
diluted as the cell grows and divides. Because CheZ is a dephosphorylation kinase of CheYp, a reduction of CheZ protein can immediately lead to higher CheYp concentration and thus more persistent tumbles of the cell. This, in turn, causes changes to the chemoreceptors as well as to other proteins involved in chemotaxis, and triggers a non-classic chemotactic cellular response. To quantify the effect of AHL in single cell movement, one must take into account the whole chemotaxis pathway as well as CheZ turnover.

A phenomenological PDE model was used to explain the pattern formation process in \cite{LFLRCL-2011-S} and a simplified version was analyzed in \cite{PRL2012}. The model consists of a system of reaction-diffusion equations for the cell density, AHL and nutrient concentrations. The diffusion rate of the cell population is assumed to be a switch-like function of the local AHL concentration. More precisely, the PDE model in \cite{LFLRCL-2011-S} is:
\begin{equation}\label{2011-Science}
  \left\{
    \begin{array}{l}
      \p_t \vr = \Delta_x ( \mu(h) \vr ) +  \tfrac{\gamma n^2 \vr}{n^2 + K^2_n} \,, \\[1.5mm]
      \p_t h = D_h \Delta_x h + \alpha \vr - \beta h \,, \\[1.5mm]
      \p_t n = D_n \Delta_x n - \tfrac{k_n \gamma n^2\vr }{n^2 + K^2_n}\,,
    \end{array}
  \right.
\end{equation}
where $\vr(x,t)$ is the cell density, $h(x,t)$ is the AHL concentration, and $n(x,t)$ is the nutrient level. The key feature (also the analytical difficulty) of this model is that the AHL-dependent cell mobility $\mu(h)$ is modeled by a steep Hill function with an abrupt transition between two different values $D_\vr$ and $D_{\vr,0}$ at $h\approx K_h$, where $K_h>0$ is the AHL threshold of mobility regulation (See Figure \ref{Fig-AHL}). Moreover, $\gamma, k_n, K_n, \alpha, \beta$ are positive parameters. There are some mathematical analysis of the models proposed in \cite{LFLRCL-2011-S} and \cite{PRL2012}, such as \cite{JKW-SIAM2018, JSW-JDE2020, MPW-PhysicaD} in which the system \eqref{2011-Science} is named {\em reaction-diffusion system with density-dependent motility}.

\begin{figure}[h]
 \centering
 \begin{tikzpicture}[node distance=2cm]
 \draw[->](0,0)--(4,0) node[right,scale=1]{AHL concentration $h$};
 \draw[->](0,0)--(0,4) node[above,scale=1]{Cell motility $\mu (h)$};
 \draw[-, very thick, color=red](0,3)--(1.5,3) ;
 \draw[-] node[below left]{0} (2.3,0)--(4,0) ;
 \draw[scale=1, very thick, domain=1.5:2.5, smooth,variable=\t,color=red]
 plot (\t,{1.25*cos( 3.1415926 * ( \t  - 1.5 ) r ) + 1.75 });
 \draw (0,3) node[left]{$D_\rho$};
 \draw[-,thick,dashed] (2,3)--(2,0) node[below, scale=1]{$K_h$};
 \draw[-,very thick,color=red](2.5,0.5)--(3.9,0.5);
 \draw[-](0,0.5)--(0.1,0.5) node[left]{$D_{\rho,0}$};
 \end{tikzpicture}
 \caption{AHL-dependent cell mobility}\label{Fig-AHL}
\end{figure}
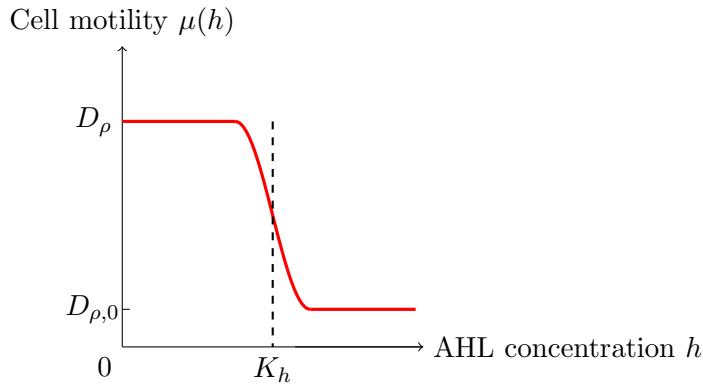

Since the whole chemotaxis pathway is involved in the pattern formation process, it is unclear how cell movement can be reduced to an anisotropic (or cross) diffusion process. Moreover, the model \eqref{2011-Science} does not address the role of intracellular signaling in stripe formation and cannot be used to understand how the spatial structure of the high-density and low-density regions depends on cell-level parameters. To address these questions, recently Xue, Xue and Tang \cite{XXT-2018-CB} proposed a hybrid model for the stripe formation which accounts for the behavior of individual cells. This hybrid model starts with a detailed description of intracellular signaling, single cell movement and cell division, and individual-based component is then coupled with reaction-diffusion equations for AHL and nutrient concentrations. However the hybrid models are very time-consuming due to the large number of cells involved in the pattern formation process. To overcome this computational challenge, in \cite{XXT-2018-CB} Xue-Xue-Tang then derived a mesoscopic model for the cell density from the hybrid model. Numerical comparisons of the hybrid model and the PDE model showed quantitative agreement in 1D under biologically-relevant parameter regimes. More precisely, they derived the following kinetic equation model for  engineered {\em Escherichia coli} populations (briefly named K-EECP system) \cite{XXT-2018-CB}:
\begin{equation}\label{EECPK}\tag{K-EECP}
  \left\{
    \begin{array}{c}
      \p_t \rho = D (z) \Delta_x \rho - \tfrac{1}{\eps} \p_z ( g (z, h) \rho ) + \gamma n \rho \,, \\[1.5mm]
      \p_t h = D_h \Delta_x h + \alpha {\vr} - \beta h \,, \\[1.5mm]
      \p_t n = D_n \Delta_x n - \xi \vr n \,, \\[1.5mm]
      \vr (t, x) = \int_{\T_w} \rho (t,x,z) d z \,,
    \end{array}
  \right.
\end{equation}
with initial data
\begin{equation}\label{IC-EECPK}
  \begin{aligned}
    ( \rho , h, n ) |_{t=0} = ( \rho^{\eps, in} (x, z) , h^{\eps, in} (x) , n^{\eps, in} (x) ) \,.
  \end{aligned}
\end{equation}
In the system \eqref{EECPK}, the scalar function $\rho (t,x,z) $ is the density of {\em Escherichia coli} cells at time $t \geq 0$, at position $x \in \T^3 : = \R^3 / [-\pi, \pi]^3$ with internal state $z \in \T_w : = \R / [0, Z_w ]$ (here $Z_w$ is the CheZ concentration of wild type Escherichia coli cells), namely, with the total concentration $z$ of CheZ protein. Here the period domains are considered for the sake of avoiding the mathematical troubles on the higher order derivatives values on the boundary. $\vr (t,x)$ is the total density of the {\em Escherichia coli} cells at time $t \geq 0$ and at position $x \in \T^3$. The scalar function $h (t,x)$ is the AHL concentration at time $t \geq 0$ and at position $x \in \T^3$. The scalar function $n (t,x)$ represents the local nutrient concentration at time $t \geq 0$ and at position $x \in \T^3$. The coefficient $\gamma > 0$ represents the cell growth rate, the positive constants $D_h$ and $D_n$ denote the diffusion coefficients of AHL and the nutrient, respectively. $\alpha > 0$ is the production rate of AHL, $\beta > 0$ stands for the degradation rate of AHL and $\xi > 0$ characterizes the consumption rate of nutrient. Moreover, the known scalar function $D (z) > 0$ is the diffusion coefficient with the form
\begin{equation}\label{D(z)}
  \begin{aligned}
    D ( z ) = \frac{ s_0^2 \mu_0 ( z ) }{ 3 \lambda_0 ( z ) [ \mu_0 ( z ) + \lambda_0 ( z ) ] } \,,
  \end{aligned}
\end{equation}
where $s_0$ represents that cells can move in any directions with constant speed $s_0$. $\lambda_0 (z)$ is the switching frequencies from the running to tumbling states, and $\mu_0 (z)$ is that from the tumbling to running states. In other words, $D (z)$ characterizes that cells with different intracellular CheZ concentration $z$ have different mobility coefficient. $g (z, h)$ is the intracellular dynamic of CheZ protein. In \cite{XXT-2018-CB}, $g (z, h)$ is given the form
\begin{equation}\label{g(zh)}
  \begin{aligned}
    g (z, h) = k_V L (h) - k_V z \,,
  \end{aligned}
\end{equation}
where the steady state $L(h)$ of the intracellular CheZ is
\begin{equation}\label{L(h)}
  L (h) =
    \left\{
      \begin{array}{l}
        Z_w \,, \quad \textrm{ if } h < \overline{h} \,, \\
        0 \,, \qquad  \textrm{ if } h \geq \overline{h} \,.
      \end{array}
    \right.
\end{equation}
Here $k_V > 0$ is the growth rate of the volume of a cell, $\overline{h}$ is the threshold AHL level for the suppression of CheZ, which is a non-dimensional constant.

The non-dimensional parameter $\tfrac{1}{\eps}$ preceding the $z$-flux term in the first $\rho$-equation of \eqref{EECPK} is introduced to investigate the effect of a slower or faster CheZ turnover rate on the spatial patterning. It parameterizes the convective speed in $z$, which corresponds to the response speed of intracellular CheZ to the external signal AHL. More precisely,
\begin{itemize}
	\item $\eps = 1$ represents the base-line model;
	\item $0 < \eps < 1$ represents faster response;
	\item $\eps > 1$ represents slower response.
\end{itemize}

Since numerical simulations of the kinetic model \eqref{EECPK} share similar behavior with models in \cite{LFLRCL-2011-S}, a natural question is: what's the relation between the kinetic model \eqref{EECPK} with the PDE model \eqref{2011-Science}? The answer is: it depends on the CheZ turnover rate, which is represented by the coefficient $\frac{1}{\eps}$ in the first equation of \eqref{EECPK}. The formal analysis is provided in the next subsection.

\subsection{Formal asymptotic behaviors of fast CheZ turnover rate}\label{Subsec:Formal}

It is natural to analyze the asymptotic behavior on the non-dimensional parameter $\eps$: the fast CheZ turnover rate corresponding to $\eps \rightarrow 0$, which is a singular limit problem. Assume $( \rho^\eps (t, x, z), h^\eps (t, x), n^\eps (t,x) )$ is a solution to the system \eqref{EECPK} with initial data \eqref{IC-EECPK}, which {\em formally} converges to $ ( \rho (t,x,z), h (t,x), n (t, x) )$ as $\eps \rightarrow 0$. Then
\begin{equation*}
  \begin{aligned}
    - \p_z ( ( k_V L (h^\eps) - k_V z ) \rho^\eps ) = \eps \p_t \rho^\eps - \eps D (z) \Delta_x \rho^\eps - \eps \gamma n^\eps \rho^\eps \longrightarrow 0
  \end{aligned}
\end{equation*}
as $\eps \rightarrow 0$, which means that for any test function $\phi (z) \in C_c^\infty (\T_w)$,
\begin{equation*}
  \begin{aligned}
    \int_{\T_w} ( k_V L (h^\eps) - k_V z ) \rho^\eps \phi (z) \d z = \int_{\T_w} ( k_V L (h^\eps) - k_V z ) \rho^\eps \Phi' (z) \d z \\
    = - \int_{\T_w} \p_z ( ( k_V L (h^\eps) - k_V z ) \rho^\eps ) \Phi (z) \d z \rightarrow 0
  \end{aligned}
\end{equation*}
as $\eps \rightarrow 0$, where $\Phi (z)$ is the primitive function of $\phi (z)$. Moreover,
\begin{equation*}
  \begin{aligned}
    \int_{\T_w} ( k_V L (h^\eps) - k_V z ) \rho^\eps \phi (z) \d z \rightarrow \int_{\T_w} ( k_V L (h) - k_V z ) \rho \phi (z) \d z \ \textrm{ as } \eps \rightarrow 0 \,.
  \end{aligned}
\end{equation*}
We then know that for any $\phi (z) \in C_c^\infty (\T_w)$,
\begin{equation*}
  \begin{aligned}
    \int_{\T_w} ( k_V L (h) - k_V z ) \rho \phi (z) \d z = 0 \,,
  \end{aligned}
\end{equation*}
which mean that
\begin{equation}\label{Steady-state-z}
  \begin{aligned}
    z  = L (h) \,.
  \end{aligned}
\end{equation}
This indicates that the limit $\rho (t, x ,z) = \vr (t,x) \delta (z - L (h))$, where $\vr (t,x) = \int_{\T_w} \rho (t,x,z) \d z$ and $\delta (\cdot)$ denotes the $\delta
$-function such that $\delta (0) = \infty$ and $\delta (z) = 0$ for $z \neq 0$. Then for the first equation of \eqref{EECPK}, i.e.,
\begin{equation*}
  \begin{aligned}
    \p_t \rho^\eps (t, x ,z) = \Delta_x ( D(z) \rho^\eps (t,x,z) ) - \tfrac{k_V}{\eps} \p_z \big( ( L (h^\eps (t,x)) - z ) \rho^\eps (t,x,z) \big) + \gamma n^\eps (t,x) \rho^\eps (t,x,z) \,,
  \end{aligned}
\end{equation*}
we integrate by parts over $z \in \T_w$ and obtain
\begin{equation*}
  \begin{aligned}
    \p_t \int_{\T_w} \rho^\eps (t, x ,z) \d z = \Delta_x ( \int_{\T_w} D(z) \rho^\eps (t,x,z) \d z ) + \gamma n^\eps (t,x) \int_{\T_w} \rho^\eps (t,x,z) \d z \,.
  \end{aligned}
\end{equation*}
By letting $\eps \rightarrow 0$ in the above equation, we {\em formally} deduce that
\begin{equation}\label{Formal-1}
  \begin{aligned}
    \p_t \int_{\T_w} \rho (t, x ,z) \d z = \Delta_x ( \int_{\T_w} D(z) \rho (t,x,z) \d z ) + \gamma n (t,x) \int_{\T_w} \rho (t,x,z) \d z \,.
  \end{aligned}
\end{equation}
We plug $\rho (t, x ,z) = \vr (t,x) \delta (z - L (h))$ into \eqref{Formal-1}, so that
\begin{equation}\label{Formal-2}
  \begin{aligned}
    \p_t \vr (t,x) = \Delta_x ( D (L (h (t,x))) \vr (t,x) ) + \gamma n (t,x) \vr (t,x) \,,
  \end{aligned}
\end{equation}
where the calculation
\begin{equation*}
  \begin{aligned}
    \int_{\T_w} D(z) \rho (t,x,z) \d z = \int_{\T_w} D(z) \vr (t,x) \delta (z - L (h (t,x))) \d z = D ( L ( h (t, x) ) ) \vr (t,x)
  \end{aligned}
\end{equation*}
is utilized. Furthermore, from taking limit in the second and the third equations in \eqref{EECPK} as $\eps \rightarrow 0$ and combining with the equation \eqref{Formal-2}, one formally derives the following anisotropic diffusion engineered {\em Escherichia coli} populations model (in short, AD-EECP system)
\begin{equation}\label{AD-EECP}\tag{AD-EECP}
  \left\{
    \begin{array}{l}
      \p_t \vr = \Delta_x ( \widetilde{D} (h) \vr ) + \gamma n \vr \,, \\[1.5mm]
      \p_t h = D_h \Delta_x h + \alpha \vr - \beta h \,, \\[1.5mm]
      \p_t n = D_n \Delta_x n - \xi \vr n \,,
    \end{array}
  \right.
\end{equation}
where
\begin{equation}\label{Tilde-D}
  \begin{aligned}
    \widetilde{D} (h) : = D (L (h)) \,.
  \end{aligned}
\end{equation}
Note, the model \eqref{AD-EECP} is almost the same as \eqref{2011-Science}. Now the the AHL-dependent cell mobility becomes $\widetilde{D} (h)$. In this sense, the models proposed in \cite{XXT-2018-CB} is more precise and more general than the models in \cite{LFLRCL-2011-S} and \cite{PRL2012}. Remark that the previous formal derivation sensitively requires the discontinuity of $\rho (t,x,z)$ (involving the Dirac function $\delta (z)$).

The previous formal limit mechanism indicates that when the response speed of intracellular CheZ to the external signal AHL is very fast, the intracellular CheZ concentration $z$ of cells will be approximated by its steady state $L(h)$, as shown in \eqref{Steady-state-z}. Finally, the initial data of the system \eqref{AD-EECP} is imposed on
\begin{equation}\label{IC-AD-EECP}
  \begin{aligned}
    {\vr} (0,x) = {\vr}^{in}_a (x)  \,, \quad h (0,x) = h^{in}_a (x) \,, \quad n (0,x) = n^{in}_a (x) \,.
  \end{aligned}
\end{equation}
Formally, it can be assumed that as $\eps \to 0$,
\begin{equation*}
	\begin{aligned}
		\int_{ \T_w } \rho^{\eps, in} (x,z) \d z \to \vr_a^{in} (x) \,, \ h^{\eps, in} (x) \to h_a^{in} (x) \,, \ n^{\eps, in} (x) \to n_a^{in} (x) \,.
	\end{aligned}
\end{equation*}

It is natural to analytically study the kinetic and macroscopic models mentioned in the above subsections. The following mathematical analysis problems should be addressed:
\begin{enumerate}
  \item Mathematically prove the existence of pattern formation, which should be represented as periodic travelling waves solutions of \eqref{EECPK} and \eqref{AD-EECP};
  \item Justify the limit from the hybrid model proposed in \cite{XXT-2018-CB} to the kinetic model \eqref{EECPK};
  \item Justify the limit from the kinetic model \eqref{EECPK} to the macroscopic model \eqref{AD-EECP};
\end{enumerate}
To address the above problems, the first step is the existence of the solutions to these systems. This is the goal of this paper:  the well-posedness of the kinetic system \eqref{EECPK}, the macroscopic system \eqref{AD-EECP}  in the {\em classical solutions regime}. For both kinetic and macroscopic models, the main analytical difficulties come from the AHL-dependent mobility.

\subsection{Some smoothness hypotheses}
 We first consider the mollified steady state $L_\ell  (h)$ of the intracellular CheZ $z$:

Let $\varphi (\varsigma) = \frac{1}{\kappa_0} \exp ( \frac{1}{|\varsigma|^2 - 1} )$ if $|\varsigma| < 1$ and $\varphi (\varsigma) = 0$ if $|\varsigma| \geq 1$, where $\kappa_0 = \int_{|\varsigma| < 1} \exp ( \frac{1}{|\varsigma|^2 - 1} ) d \varsigma > 0$. For any $\ell > 0$, we denote $\varphi_\ell (\varsigma) = \frac{1}{\ell} \varphi ( \frac{\varsigma}{\ell} )$. We then define the smoothly mollified function $L_\ell (h)$ of the step function $L(h)$ given in \eqref{L(h)} such that
\begin{equation}\label{L(h)-m}
  \begin{aligned}
    L_\ell  (h) = \int_\R L (\varsigma) \varphi_\ell ( h - \varsigma ) \d \varsigma \,,
  \end{aligned}
\end{equation}
whose diagram is as shown in Figure \ref{Fig-MAAI-CheZ}.
\begin{figure}[h]
	\centering
	\begin{tikzpicture}[node distance=2cm]
	  \draw[->](-2,0)--(4,0) node[right,scale=1]{$h$};
	  \draw[->](0,-1)--(0,3.5) node[above,scale=1]{$L_\ell (h)$};
	  \draw[-](-1.5,2)--(1.3,2) ;
	  \draw[-] node[below left]{0} (2.3,0)--(4,0) ;
	  \draw[scale=1,domain=1.3:2.3,smooth,variable=\t]
	  plot (\t,{cos( 3.1415926 * ( \t  - 1.3 ) r ) + 1 });
	  \draw (0,2)--(0.1,2) node[above left]{$Z_w$};
	  \draw (1.3,0.1)--(1.3,0) node[below left= -2pt, scale=1]{$\bar{h}-\ell$};
	  \draw (1.8,0.1)--(1.8,0) node[below=-2pt, scale=1]{$\bar{h}$};
	  \draw (2.3,0.1)--(2.3,0) node[below right= -2pt, scale=1]{$\bar{h}+\ell$};
	\end{tikzpicture}
	\caption{Mollified steady state of the intracellular CheZ.}\label{Fig-MAAI-CheZ}
\end{figure}
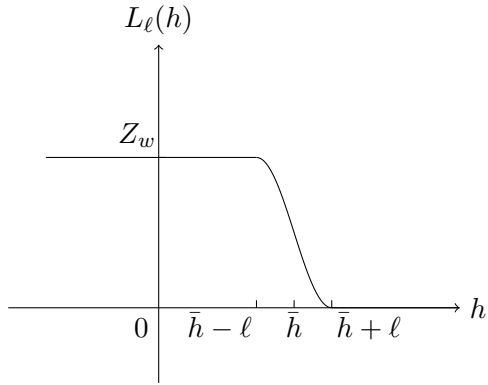

Notice that $L_\ell (h) \rightarrow L(h)$ as $\ell \rightarrow 0$, and for any integer $k \geq 1$,
\begin{equation}\label{equation 1.11}
  \begin{aligned}
    | \varphi_\ell^{(k)} (\varsigma) | \leq \tfrac{C}{\ell^k} \quad ( \, \forall \, \varsigma \in \R ) \,,
  \end{aligned}
\end{equation}
where $C > 0$ is some positive constant depending on $\kappa_0 > 0$. Thus, we easily derive that the scalar function $L_\ell : \omega \in \R \mapsto L_\ell (\omega) \in \R^+$ satisfies
\begin{equation}
  \begin{aligned}
    \sup_{\omega \in \R} | L_\ell^{(k)} (\omega) | \leq \tfrac{C}{\ell^k} <  \infty \,.
  \end{aligned}
\end{equation}
Here and in what following, the symbol $f^{(k)} (x)$ means the $k$-order derivative $\tfrac{\d^k}{\d x^k} f (x)$ for single variable functions. Moreover, for the smooth scalar function
\begin{equation}\label{g(z,w)}
  \begin{aligned}
    g_\ell (z, \omega) := k_V L_\ell (\omega) - k_V z \,,
  \end{aligned}
\end{equation}
set
\begin{equation}\label{equation 1.3}
\begin{aligned}
  \qquad \quad c_1 : = \sum_{i=1}^s \sup_{(z,\omega) \in \T_w \times \mathbb{R} } | \p_\omega^i g_\ell (z,\omega)  | \leq \sum_{i=1}^s C \ell^{-i} < \infty \,,
\end{aligned}
\end{equation}
where the constant $C > 0$ is independent of $\ell > 0$. Moreover, there holds
\begin{equation}\label{equation 1.4}
\begin{aligned}
	\p_z g_\ell (z, \omega) \equiv - k_V \,, \quad \p_z^l \p_\omega^m g_\ell (z, \omega) \equiv 0 \ \ \textrm{for any } l \geq 1 \ \textrm{ and } m \geq 0 \,,
\end{aligned}
\end{equation}

Furthermore, the diffusion coefficient $D (z)$ given in \eqref{D(z)} should also be imposed on some smooth hypotheses and positive lower assumptions. More precisely,
\begin{equation}\label{equation 1.5}
  \begin{aligned}
    a : = \sum_{i=0}^s \sup_{ z \in \T_w } | D^{(i)} (z) | < \infty \,,
  \end{aligned}
\end{equation}
\begin{equation}\label{equation 1.6}
  \begin{aligned}
    \qquad \qquad \qquad d : = \inf_{ z \in \T_w } D(z) > 0 , \, \ \int_{ \T_w } | D(z) - d |^2 \d z < \infty \,.
  \end{aligned}
\end{equation}
We then define a number $\eta$ as
\begin{equation}\label{equation 1.7}
  \begin{aligned}
    \eta : = \tfrac{\|D(z)-d\|_{L_z^2}^2}{d^2} \geq 0 \,.
  \end{aligned}
\end{equation}
Consequently, the function $\widetilde{D}_\ell (\zeta) : = D ( L_\ell (\zeta) )$ (the original discontinuous form of $\widetilde{D}$ given in \eqref{Tilde-D}) automatically satisfies
\begin{equation}\label{equation 1.12}
  \begin{aligned}
    b : = \sup_{ \omega \in \mathbb{R} } \sum_{i=0}^s | \widetilde{D}_\ell^{(i)} ( \omega ) | < \infty \,, \quad d = \inf_{\zeta \in \mathbb{R} } \widetilde{D}_\ell (\zeta) > 0 \,.
  \end{aligned}
\end{equation}
Since $L_\ell (\zeta)$ ranges in $[0, Z_w]$, exactly one period of $\T_w$, $\widetilde{D}_\ell$ and $D(z)$ have the same infimum $d > 0$. Remark that $b > 0$ depends on $\ell > 0$ and $b \to + \infty $ as $\ell \to 0$. Summarily, in this paper, we impose the following two hypothesis on the systems \eqref{EECPK} and \eqref{AD-EECP}:
\begin{enumerate}
	\item[\bf (H1)] The steady state $L (h)$ is replaced by its mollified form $L_\ell (h)$ given in \eqref{L(h)-m}. Therefore the scalar function $g (z,h)$ satisfies the bound \eqref{equation 1.3}.
	\item[\bf (H2)] The diffusion coefficient $D (z)$ in \eqref{EECPK} satisfies the assumptions \eqref{equation 1.5} and \eqref{equation 1.6}. Consequently, the diffusion coefficient $\widetilde{D}_\ell (\zeta) : = D ( L_\ell (\zeta) )$ subjects to \eqref{equation 1.12}.
\end{enumerate}
For simplicity, the subscript $\ell$ will be dropped in the rest of this paper.

\subsection{Notations and main results}
Before presenting our main results, we first gather the all notations and conventions used throughout this paper. We employ the notations $A \lesssim B$ to represent $A \leq C B$ for some harmless positive constant $C$. The symbol $A \thicksim B$ is employed if $C_1 B \leq A \leq C_2 B$ for some positive constants $C_1, C_2$. For any $p,q\in [1,\infty]$, we introduce the Banach spaces $L^p_x$, $L^q_z$ and $L^p_x L^q_z$ equipped with the norms
\begin{align*}
  & \| f \|_{L^{p}_x} = ( \int_{ \mathbb{T}^3 } |f|^p \d x )^{\frac{1}{p}} \,, \quad \| f \|_{L^{q}_z} = ( \int_{ \T_w } |f|^q \d z )^{\frac{1}{q}} \,, \\
  & \| f \|_{L^{p}_xL_z^q} = ( \int_{ \mathbb{T}^3 } \| f(x,\cdot) \|_{L_z^q}^p \d x )^{\frac{1}{p}}
\end{align*}
for $1 \leq p , q < \infty$, and if $p = \infty$ or $q = \infty$, the corresponding modifications shall be taken. For $p = q = 2$, we denote $L^2_{x,z} : = L^2_x L^2_z$ and use the notation $\l \cdot,\cdot \r_{L^2_x}$, $\l \cdot,\cdot \r_{L^2_z}$ and $\l \cdot,\cdot \r_{L^2_{x,z}}$ to represent the inner product on the Hilbert space $L_x^2$, $L_z^2$ and $L_{x,z}^2$, respectively.

In this paper, the symbol $\nabla_x$ stands for the gradient operator and $\Delta_x$ is the Laplacian operator. Moreover, for any multi-index $k=(k_1, k_2, k_3) \in \mathbb{N}^3$ and integer $ l \in \mathbb{N}$, we denote the higher order derivative operators
\begin{align*}
  \p_x^k : = \frac{ \p^{ |k| } }{ \p x_1^{k_1} \p x_2^{k_2} \p x_3^{k_3} } \,, \quad \p_z^l \p_x^k : = \frac{ \p^{ |k| + l } }{ \p x_1^{k_1} \p x_2^{k_2} \p x_3^{k_3} \p z^l} \,,
\end{align*}
where $|k|=k_1+k_2+k_3$. If each component of $k \in \mathbb{N}^3$ is not greater than that of $\tilde{k} \in \mathbb{N}^3$, we denote $k \leq \tilde{k}$. We further employ the symbol $k < \tilde{k}$ to indicate that $k \leq \tilde{k}$ and $|k| < |\tilde{k}|$. We then define the Sobolev spaces $H^s_x$, $H_{x}^{s} L^2_z$ and $H^s_{x,z}$ endowed with the norms
\begin{equation*}
  \begin{aligned}
    & \| \, \cdot \, \|_{H^s_x} : = ( \sum_{|k| \leq s} \| \p^k_x \, \cdot \, \|_{L^2_x}^2 )^{\frac{1}{2}} \,, \ \| \, \cdot \, \|_{H_{x}^{s} L^2_z} : = ( \sum_{ |k| \leq s } \| \p_x^k \, \cdot \, \|_{L_{x,z}^2}^2 )^{\frac{1}{2}} \,, \\
    & \| \, \cdot \, \|_{H_{x,z}^{s}} : = ( \sum_{ l + |k| \leq s } \| \p_z^l \p_x^k \, \cdot \, \|_{L_{x,z}^2}^2 )^{\frac{1}{2}} \,,
  \end{aligned}
\end{equation*}
respectively. Moreover, for the functions $D (z) \geq d > 0$ and $\widetilde{D} (x) = D \circ h (x) \geq d > 0$, we define the following weighted spaces
\begin{align*}
  \| \, \cdot \, \|_{L_{x,z}^2 (D)} & : = \| \cdot D^\frac{1}{2} (z) \|_{L^2_{x,z}} \,, \qquad \| \, \cdot \, \|_{L_x^2 ( \widetilde{D} ) } : = \| \, \cdot \, \widetilde{D} (h(x))^\frac{1}{2} \|_{L^2_x} \,,  \\
  \| \, \cdot \, \|_{H_{x,z}^s (D) } & : = ( \sum_{l + |k| \leq s} \| \p_z^l \p_x^k \, \cdot \, \|^2_{L^2_{x,z}} )^{\tfrac{1}{2}} \,, \quad \| \, \cdot \, \|_{H_x^s L_z^2(D) } : = ( \sum_{|k| \leq s} \| \p^k_x \, \cdot \, \|_{L^2_{x,z} (D)}^2 )^{\tfrac{1}{2}} \,, \\
  \| \, \cdot \, \|_{H_x^s(\widetilde{D})} & : = ( \sum_{|k| \leq s} \| \p_x^k \, \cdot \, \|^2_{L^2_x (\widetilde{D})} )^{\tfrac{1}{2}} \,.
\end{align*}

In this paper, we will mainly investigate the well-posedness of the kinetic system \eqref{EECPK} with initial data \eqref{IC-EECPK} and the macroscopic system \eqref{AD-EECP} with initial data \eqref{IC-AD-EECP}, including the local existence with large initial data and the global existence near the constant equilibria, which are such that the conservation laws of mass holds. In order to state the results on the system \eqref{EECPK}, we first introduce the energy functional
\begin{equation}\label{Es}
	\begin{aligned}
		\mathscr{E}_s (\rho, h, n) : = \| \rho \|^2_{H^s_{x,z}} + \| \int_{\T_w} \rho (\cdot, \cdot, z) \d z \|^2_{H^s_x} + \| h \|^2_{H^s_x} + \| n \|^2_{H^s_x} \,,
	\end{aligned}
\end{equation}
and the energy dissipative rate
\begin{equation}\label{Ds}
	\begin{aligned}
		\mathscr{D}_s (\rho, h, n) : = \| \nabla_x \rho \|^2_{H^s_{x,z} (D)} + \| \nabla_x \int_{\T_w} \rho (\cdot, \cdot, z) \d z \|^2_{H^s_x} + \| \nabla_x h \|^2_{H^s_x} + \| \nabla_x n \|^2_{H^s_x} + \| h \|^2_{H^s_x} \,.
	\end{aligned}
\end{equation}
For simplicity, we also employ the symbol
\begin{equation*}
	\begin{aligned}
		  \vr (t,x) : = \int_{\T_w} \rho (t, x, z) \d z \,.
	\end{aligned}
\end{equation*}
Now we precisely state our main theorem as follows:

\begin{theorem}[Well-posedness of \eqref{EECPK}]\label{Thm1}
  Let $s \geq 3$ be any fixed integer. Assume $g (z, \omega)$ and $D(z)$ satisfy the smoothness hypotheses $(\rm H1)$-$(\rm H2)$. All coefficients are assumed to be positive.
  \begin{enumerate}
  	\item {\bf (Local well-posedness with large data)} If the initial energy
       \begin{align}\label{IC-K1}
         E_L^{\eps,in} : = \mathscr{E}_s (\rho^{\eps, in}, h^{\eps, in}, n^{\eps, in}) < \infty \,,
       \end{align}
      then for any $0 < T < \tfrac{1}{C(1+\eps^{-1})} \ln \tfrac{[1 + (E_L^{\eps, in})^\frac{s}{2}]^\frac{2}{s}}{E_L^{\eps, in}} $, the Cauchy problem \eqref{EECPK}-\eqref{IC-EECPK} admits a unique solution $ ( \rho (t,x,z), h(t,x), n (t,x) ) $ with
      \begin{align*}
        & \rho \in L^\infty(0,T;H_{x,z}^{s}) \,, \ \nabla_x \rho \in L^2 (0,T; H_{x,z}^s (D)) \,, \\
        & \vr \,, h \,, n \in L^\infty(0, T; H^s_x) \cap L^2 (0, T; H^{s+1}_x) \,.
      \end{align*}
      Moreover, the following energy bound holds:
      \begin{align*}
        \sup_{0 \leq t \leq T} \mathscr{E}_s (\rho, h, n) + \int_0^T \mathscr{D}_s (\rho, h, n) \d t \leq \widetilde{B}_\eps (T, E_L^{\eps, in}, C) < \infty \,.
      \end{align*}
      Here the constant $C=C(k_V, c_1, c_2, a, d, \alpha, \beta, \gamma, \xi, s)>0$ is independent of $\eps$ (given in Lemma \ref{lemma 2.1} below).

      \item {\bf (Positivity and conservation laws)} If the initial data further satisfy
      \begin{equation}\label{IC-K2}
      	\begin{aligned}
      		\rho^{\eps, in} (x,z) \,, h^{\eps, in} \,, n^{\eps, in} \geq 0 \,,
      	\end{aligned}
      \end{equation}
      then the solution $(\rho, h, n)$ constructed above obeys the positivity
      \begin{equation}
      	  \begin{aligned}
      	  	  \rho (t, x,z), h(t,x), n(t,x) \geq 0 \,.
      	  \end{aligned}
      \end{equation}
      Furthermore, if the initial data are also assumed
      \begin{equation}\label{IC-Average}
      	\begin{aligned}
      		\iint_{\T^3 \times \T_w} \rho^{\eps, in} (x,z) \d z \d x = (2 \pi)^3 Z_w \rho_0 \geq 0 \,, \ \int_{\T^3} n^{\eps, in} (x) \d x =  0
      	\end{aligned}
      \end{equation}
      for any constant $\rho_0 \geq 0$, then there hold
      \begin{equation}\label{Consv-KEECP}
      	\begin{aligned}
      		\int_{\T^3} \vr (t,x) \d x = (2 \pi)^3 Z_w \rho_0 \geq 0 \,, \ \int_{\T^3} n (t,x) \d x = 0 \quad \textrm{ for all } t \in [0,T] \,.
      	\end{aligned}
      \end{equation}

      \item {\bf (Global existence around $(0,0,0)$)} Under the initial assumptions \eqref{IC-K1}-\eqref{IC-K2}-\eqref{IC-Average} with $\rho_0 = 0$, the solution $(\rho, h, n)$ around the trivial steady state $(0,0,0)$ globally exists. Moreover, the solution $(\rho, h, n)$ satisfies $\rho (t,x,z) = n (t,x) \equiv 0$ and
      \begin{equation}
      	\begin{aligned}
      		\sup_{ t \geq 0 } \, \mathscr{E}_s (\rho, h, n) (t) + \int_0^\infty \mathscr{D}_s (\rho , h , n) (\varsigma) \d \varsigma \leq \mathcal{C}_1 \mathscr{E}_s (\rho^{\eps, in}, h^{\eps, in}, n^{\eps, in})
      	\end{aligned}
      \end{equation}
      for some uniform constant $\mathcal{C}_1 > 0$.
  \end{enumerate}
\end{theorem}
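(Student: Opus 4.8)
The plan is to treat the local existence of Part (1) as the analytic core and then deduce the positivity and conservation of Part (2) and the global statement of Part (3) from it. For local existence I would construct solutions by a linearized iteration: given the $m$-th iterate $(\rho^m,h^m,n^m)$, solve the linear problem obtained by freezing the coefficient $g(z,h^m)$, the mobility, and the reaction coefficients at the previous step, which splits into a transport--diffusion equation for $\rho^{m+1}$ (parabolic in $x$, first order transport in $z$) and two scalar damped heat equations for $h^{m+1}$ and $n^{m+1}$. The whole argument then rests on a uniform-in-$m$ a priori bound for $\mathscr{E}_s$ with the quantitative lifespan; once that is in hand, standard weak compactness produces a limit solving \eqref{EECPK} with the asserted regularity, and an $L^2$-type energy estimate on the difference of two solutions gives uniqueness.

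For the a priori estimate I would apply $\p_z^l\p_x^k$ with $l+|k|\le s$ to the $\rho$-equation and pair with $\p_z^l\p_x^k\rho$ in $L^2_{x,z}$, running in parallel the $H^s_x$ estimates for $h$, $n$, and separately for $\vr=\int_{\T_w}\rho\,\d z$. Three structural features organize everything. First, $D(z)\Delta_x$ is dissipative only in $x$, producing $\|\nabla_x\rho\|^2_{H^s_{x,z}(D)}$ but no $z$-smoothing, so the $z$-derivatives must be controlled through the transport term alone; since $\p_z g\equiv-k_V$ by \eqref{equation 1.4}, integrating $-\tfrac1\eps\int g\,\p_z\rho\cdot\rho$ by parts leaves only the bounded contribution $\tfrac{k_V}{2\eps}\|\rho\|^2$, the source of the factor $(1+\eps^{-1})$, while the cross terms $D'(z)\Delta_x\p_x^k\rho$ coming from the $z$-dependence of $D$ are absorbed into the $\rho$-dissipation by Young's inequality and the bound $a$ of \eqref{equation 1.5}. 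Second, integrating the $\rho$-equation over $\T_w$ annihilates the $z$-flux by periodicity and yields the $\eps$-free equation $\p_t\vr=\Delta_x\int_{\T_w}D(z)\rho\,\d z+\gamma n\vr$; the splitting $\int D(z)\rho\,\d z=d\,\vr+\int(D(z)-d)\rho\,\d z$ isolates the coercive diffusion $d\,\Delta_x\vr$ and leaves a remainder bounded via Cauchy--Schwarz in $z$ by $\eta=\|D-d\|^2_{L^2_z}/d^2$ times the $\rho$-dissipation, which is then absorbed, so that the $\vr,h,n$ block is estimated with an $\eps$-independent constant and the singular factor $1/\eps$ stays confined to the $\rho$-estimate. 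Third, the couplings $\gamma n\rho$, $\alpha\vr$, $\xi\vr n$ and the chain-rule terms from $L_\ell(h)$ inside $g$ are treated by Moser/tame product estimates with $H^s_x\hookrightarrow L^\infty_x$ (valid since $s\ge3$); the worst term $L_\ell^{(s)}(h)(\p_x h)^s\rho$ is of order $\mathscr{E}_s^{s/2}$ in $h$, producing the top-order nonlinearity $\mathscr{E}_s^{1+s/2}$ weighted by $1/\eps$. Collecting gives $\tfrac{\d}{\d t}\mathscr{E}_s+c\,\mathscr{D}_s\le C(1+\eps^{-1})\big(\mathscr{E}_s+\mathscr{E}_s^{1+s/2}\big)$ with $C$ independent of $\eps$, and integrating this Bernoulli-type inequality yields exactly the stated lifespan and the bound $\widetilde B_\eps$.

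For positivity I would argue by the maximum principle, equivalently by testing each equation against the negative part of its unknown. Rewriting the $\rho$-equation as $\p_t\rho-D(z)\Delta_x\rho+\tfrac1\eps g\,\p_z\rho-(\tfrac{k_V}\eps+\gamma n)\rho=0$, a degenerate parabolic equation with bounded coefficients, the datum $\rho^{\eps,in}\ge0$ forces $\rho\ge0$, whence $\vr\ge0$; then $h$ and $n$ solve $\p_t h-D_h\Delta_x h+\beta h=\alpha\vr\ge0$ and $\p_t n-D_n\Delta_x n+\xi\vr\,n=0$ with nonnegative data, giving $h,n\ge0$. For the conservation laws, $\int_{\T^3}n^{\eps,in}\,\d x=0$ together with $n^{\eps,in}\ge0$ forces $n^{\eps,in}\equiv0$, hence $n\equiv0$ by uniqueness; integrating the $\vr$-equation over $\T^3$ then leaves only $\gamma\int n\vr\,\d x=0$, so $\int_{\T^3}\vr\,\d x$ is constant and equals $(2\pi)^3Z_w\rho_0$.

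Finally, the global statement around $(0,0,0)$ follows cleanly: with $\rho_0=0$, conservation gives $\int_{\T^3}\vr\,\d x=0$, and combined with $\vr\ge0$ this forces $\vr\equiv0$, hence $\rho\equiv0$, and likewise $n\equiv0$. The system then collapses to the single linear damped heat equation $\p_t h=D_h\Delta_x h-\beta h$, whose identity $\tfrac{\d}{\d t}\|h\|_{H^s_x}^2+2D_h\|\nabla_x h\|_{H^s_x}^2+2\beta\|h\|_{H^s_x}^2=0$ gives global existence, exponential decay, and the bound $\sup_t\mathscr{E}_s+\int_0^\infty\mathscr{D}_s\,\d\varsigma\le\mathcal{C}_1\mathscr{E}_s(\rho^{\eps,in},h^{\eps,in},n^{\eps,in})$ after a standard continuation argument. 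The main obstacle is the one highlighted in Part (1): closing the $z$-derivative hierarchy for $\rho$ with no $z$-dissipation available, which forces one to rely on the exact transport structure $\p_z g\equiv-k_V$ and on the $d$/$\eta$-splitting that quarantines the $1/\eps$ singularity inside the $\rho$-block so that $C$ stays $\eps$-independent; a secondary difficulty is that the steep mollified switch makes the derivative bounds $c_1$ (hence $b$) degenerate as $\ell\to0$, so all estimates are uniform only at fixed mollification.
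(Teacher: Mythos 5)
Your proposal is correct and, for parts (1) and (3), follows essentially the same route as the paper: a linearized iteration freezing $g(z,h^m)$ and the reaction coefficients, the same closed hierarchy of mixed $(x,z)$-derivative estimates exploiting $\p_z g\equiv -k_V$ and the splitting $D(z)=d+(D(z)-d)$ with the weight $\eta=\|D-d\|_{L^2_z}^2/d^2$ quarantining the $\rho$-dissipation, the same Bernoulli-type inequality $\tfrac{\d}{\d t}E_L\le C(1+\eps^{-1})(1+E_L^{s/2})E_L$ integrated via $\ln\bigl(E_L/(1+E_L^{s/2})^{2/s}\bigr)$ to produce exactly the stated lifespan, and the same reduction in part (3) to the linear damped heat equation for $h$ after $\rho\equiv n\equiv 0$. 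The only genuine divergence is in part (2). For positivity of $\rho$ you test against the negative part $\rho_-$ (Stampacchia truncation), using $\int g\,\p_z\rho\,\rho_-=\tfrac{k_V}{2}\|\rho_-\|_{L^2_{x,z}}^2$ and Gr\"onwall; the paper instead runs a contradiction argument on the open set $\Omega_-=\{\rho<0\}$ with smooth cutoffs $\chi_R$ and a limit $R\to\infty$, invoking Perthame's Lemma 3.10 for $h$ and $n$. These are the same energy-truncation idea, and yours is the cleaner packaging (the justification of $\p_z\rho\cdot\rho_-=\p_z\rho_-\cdot\rho_-$ is routine at the $H^s$, $s\ge 3$, regularity available). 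For the conservation law you take a shortcut the paper does not: since $n^{\eps,in}\ge 0$ is continuous with zero average, $n^{\eps,in}\equiv 0$, hence $n\equiv 0$ by uniqueness of the linear $n$-equation, and $\int_{\T^3}\vr\,\d x$ is then trivially conserved; the paper instead derives the combined conserved quantity $\iint\rho\,\d z\,\d x+\tfrac{\gamma}{\xi}\int n\,\d x$ and uses monotonicity of $\int n$ together with $n\ge 0$. Both are valid; your route is more direct but conceals the structural fact, recorded in the paper, that the combined quantity is conserved unconditionally, which is what makes the zero-average hypothesis \eqref{IC-Average} the natural one.
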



\begin{remark}\label{Rmk1-1}
	There is a natural goal to prove the global-in-time solution around the steady space-homogeneous states to the system \eqref{EECPK}. Now we try to seek the steady space-homogeneous solutions with form $(\rho_* (z), h_*, n_*)$, where $\rho_* (z) \geq 0$ and the nonnegative constants $h_*$, $n_*$ are to be determined. Since the smooth solutions (at least continuous solutions) are considered in this paper, $\rho_* (z)$ is also assumed to be continuous. Plugging $(\rho_* (z), h_*, n_*)$ into \eqref{EECPK} reduces to
	\begin{equation}\label{Steady-*}
		\left\{
		  \begin{aligned}
		  	  - \tfrac{k_V}{\eps} \partial_z [ (L(h_*) - z) \rho_* (z) ] + \gamma n_* \rho_* (z) = 0 \,, \\
		  	  \alpha \int_{ \T_w } \rho_* (z) \d z - \beta h_* = 0 \,, \\
		  	  - \xi \int_{ \T_w } \rho_* (z) \d z \cdot n_* = 0 \,.
		  \end{aligned}
		\right.
	\end{equation}
The last equality means $n_* = 0$ or $\int_{ \T_w } \rho_* (z) \d z = 0$.

{\bf Case 1. $n_* = 0$.} The first equation in \eqref{Steady-*} means
\begin{equation*}
	\begin{aligned}
		( L(h_*) - z ) \rho_* (z) = c_*
	\end{aligned}
\end{equation*}
for some undetermined constant $c_*$ and any $z \in \T_w$. Recall $L (h_*) \in [0, Z_w]$ (see \eqref{L(h)} above). Choosing $z = L(h_*) \in \T_w$ in the previous equality yields that $c_* = 0$. Namely, $( L(h_*) - z ) \rho_* (z) = 0$ holds for any $z \in \T_w$, which means that $\rho_* (z) \equiv 0$. The second equality in \eqref{Steady-*} thereby implies $h_* = 0$.

{\bf Case 2. $\int_{ \T_w } \rho_* (z) \d z = 0$.} The positivity and continuity of $\rho_* (z) \geq 0$ imply $\rho_* (z) \equiv 0$. There therefore holds $h_* = 0$ via the second equality in \eqref{Steady-*}.

In summary, the steady space-homogeneous solutions are of the forms $(0,0,n_*)$, where the constant $n_* \geq 0$.

The goal is to analyze the long time existence near the steady space-homogeneous solutions $(0, 0, n_*)$ with $n_* \geq 0$. As shown in Appendix \ref{Appendix} below, the system \eqref{EECPK} is linearly unstable around $(0,0,n_*)$ while $n_* > 0$. It remains to consider the long time existence near the trivial state $(0,0,0)$. As the similar arguments on the long time existence around the system \eqref{AD-EECP} in Theorem \ref{Thm3} below in the energy method framework, there are two quantities $\gamma \iint_{\T^3 \times \T_w} [\rho (t,x,z)]^2 n (t,x) \d z \d x$ and $- \xi \iint_{\T^3 \times \T_w} \rho (t,x,z) [ n (t,x) ]^2 \d z \d x$, which should be controlled by $\mathscr{E}_s^\frac{1}{2} (\rho, h, n) \mathscr{D}_s (\rho, h, n)$. In this process, the averages $ \frac{1}{|\T^3 \times \T_w|} \iint_{\T^3 \times \T_w} \rho (t,x,z) \d z \d x$ and $ \frac{1}{|\T^3|} \int_{ \T^3 } n (t,x) \d x$ shall be coincided with the constant steady state $(\rho_* (z)=0, h_* = 0, n_* = 0)$, which ensure the validity of Poincar\'e inequality. Namely, one needs
\begin{equation} \label{Average}
	\begin{aligned}
		\iint_{\T^3 \times \T_w} \rho (t,x,z) \d z \d x = \int_{ \T^3 } n (t,x) \d x = 0 \,.
	\end{aligned}
\end{equation}
The previous equality is derived from \eqref{Consv-KEECP} with $\rho_0 = 0$ under the initial conditions \eqref{IC-K1}, \eqref{IC-K1} and \eqref{IC-Average}.

However, once the zero-average condition \eqref{Average} holds, the positivity of $(\rho, h, n)$ yields that $\rho (t,x,z) = n(t,x) \equiv 0$ and $h (t,x)$ subjects to
\begin{equation}\label{Linear-h}
	\begin{aligned}
		\partial_t h = D_h \Delta_x h - \beta h \,, \ h (0,x) = h^{\eps, in} (x) \geq 0 \,.
	\end{aligned}
\end{equation}
Consequently, under the initial conditions \eqref{IC-K1}-\eqref{IC-K2}-\eqref{IC-Average} with $\rho_0 = 0$, the result of long time existence around the trivial steady solution $(0,0,0)$ is reasonable and foreseeable. Nevertheless, the global-in-time solution around $(0,0,0)$ without initial assumption \eqref{IC-Average} is still open.
\end{remark}

\begin{remark}\label{Rmk1-2}
	We now focus on the limit from \eqref{EECPK} to \eqref{AD-EECP} system as $\eps \to 0$. The formal derivation of the limit has been given in Subsection \ref{Subsec:Formal} in the weak distribution sense. In the part (1) of Theorem \ref{Thm1}, the local existence time $T$ has a upper bound
	\begin{equation*}
		\begin{aligned}
			T_\eps : = \tfrac{1}{C(1+\eps^{-1})} \ln \tfrac{[1 + (E_L^{\eps, in})^\frac{s}{2}]^\frac{2}{s}}{E_L^{\eps, in}} > 0 \,.
		\end{aligned}
	\end{equation*}
    If the initial energy $E_L^{\eps, in}$ is uniformly bounded in $\eps > 0$,
    \begin{equation*}
    	\begin{aligned}
    		T_\eps \to 0 \quad \textrm{as} \ \eps \to 0 \,.
    	\end{aligned}
    \end{equation*}
    This fails to pass the limit from \eqref{EECPK} system to \eqref{AD-EECP} system as $\eps \to 0$ in the smooth solution regime. The reason is that the formal derivation above depends on the Dirac distribution function. However, the smooth regime excludes exactly the all discontinuous cases. In this sense, the rigorous justification of the limit must be in the weak solution regime. This will be appeared in our separated work.
\end{remark}

Next, we investigate the well-posedness of the system \eqref{AD-EECP} with initial data \eqref{IC-AD-EECP} and study the long time existence of the solutions. In order to state simply the results on the system \eqref{AD-EECP}, one defines the following energy functional
\begin{equation}
	\begin{aligned}
		\mathbf{E}_s (\vr, h, n) : = \| \vr \|_{H^s_x}^2 + \| h \|_{H^s_x}^2 + \| n \|_{H^s_x}^2
	\end{aligned}
\end{equation}
and energy dissipative rate
\begin{equation}
	\begin{aligned}
		\mathbf{D}_s (\vr, h, n) : = \| \nabla_x \vr \|_{H^s_x ( \widetilde{D} ) }^2 + \| \nabla_x h \|_{H^s_x}^2 + \| h \|_{H^{s}_{x}}^2 + \| \nabla_x n \|_{H^s_x}^2 + \| n \|_{H^s_x}^2 \,.
	\end{aligned}
\end{equation}
We also define an additional energy functional
\begin{equation}
	\begin{aligned}
		\mathbb{E}_s (h) : = \| \widetilde{D} (h) \|^2_{H^s_x} \,.
	\end{aligned}
\end{equation}
It is easy to see that $\mathbb{E}_s (h) \leq C \mathbf{E}_s (\vr, h, n)$ under the hypothesis (H2).

We also should clarify the steady space-homogeneous solutions $(\vr_a, h_a, n_a)$, where the constants $\vr_a, h_a, n_a \geq 0$ is to be determined. Substituting $(\vr_a, h_a, n_a)$ into \eqref{AD-EECP} yields that
\begin{equation*}
	\left\{
	  \begin{aligned}
	  	  \gamma n_a \vr_a = 0 \,, \\
	  	  \alpha \vr_a - \beta h_a = 0 \,, \\
	  	  - \xi \vr_a n_a = 0 \,.
	  \end{aligned}
	\right.
\end{equation*}
Hence, the steady space-homogeneous states $(\vr_a, h_a, n_a)$ of \eqref{AD-EECP} system must satisfy
\begin{equation*}
	\begin{aligned}
		n_a = 0 \,, h_a = \frac{\alpha}{\beta} \vr_a \geq 0 \,, \ \textrm{ or } \vr_a = h_a = 0 \,, n_a \geq 0 \,.
	\end{aligned}
\end{equation*}
As shown in Appendix, the solution $(\vr, h, a)$ around $(0,0,n_a)$ with $n_a > 0$ is linear unstable. Thus, we will prove the long time existence of the solutions around $(\vr_a, h_a, 0)$ with $h_a = \frac{\alpha}{\beta} \vr_a \geq 0$. However, not all $\vr_a \geq 0$ makes the \eqref{AD-EECP} system dissipative near $(\vr_a, h_a, 0)$. As shown in Lemma \ref{Lmm-Dissi} below, only if $\vr_a \in [0, \Lambda_b)$, the system will be dissipated near $(\vr_a, h_a, 0)$, where
\begin{equation}\label{Lambda-b}
	\begin{aligned}
		\Lambda_b : = \tfrac{4 \beta d^\frac{3}{2} D_h}{\alpha^2 \mathcal{C}_p^2 b} > 0 \,.
	\end{aligned}
\end{equation}
Here the constant $\mathcal{C}_p > 0$ is given in Lemma \ref{Lmm-Dissi}. The similar ideas can be found in \cite{JLZ-2021-CMS}.

Now we state our main results on the \eqref{AD-EECP} system as follows.

\begin{theorem}[Well-posedness of \eqref{AD-EECP}]\label{Thm3}
  Let $s \geq 4$ be any fixed integer. Assume that $\widetilde{D} (\cdot)$ satisfy the hypothesis $({\bf H2})$. All coefficients are assumed to be positive.
  \begin{enumerate}
  	\item {\bf (Local well-posedness with large data)} If the initial energy
      \begin{equation}\label{IC-A1}
        \begin{aligned}
          E_l^{in} : = \mathbf{E}_s (\vr_a^{in}, h_a^{in}, n_a^{in}) < \infty \,,
        \end{aligned}
     \end{equation}
    then there exists a number $T = T ( E_l^{in}, s, d, b, \alpha, \beta, \gamma, \xi ) > 0 $ such that the Cauchy problem \eqref{AD-EECP}-\eqref{IC-AD-EECP} admits a unique solution $ (\vr, h, n ) (t,x)$ on the interval $[0,T]$ satisfying
    \begin{align*}
      \vr, h, n \in L^\infty (0,T; H_{x}^s ) \cap L^2(0,T; H_x^{s+1} ) \,.
    \end{align*}
    Moreover, there holds
    \begin{align*}
      \sup_{ 0 \leq t \leq T } \mathbf{E}_s (\vr, h, n) + \int_0^T \mathbf{D}_s (\vr, h, n) \d t \leq B (E_l^{in}, T) \,.
    \end{align*}

    \item {\bf (Positivity and conservation laws)} If the initial data further satisfy
    \begin{equation}\label{IC-A2}
    	\begin{aligned}
    		\vr_a^{in} (x) \,, h_a^{in} (x) \,, n_a^{in} (x) \geq 0 \,,
    	\end{aligned}
    \end{equation}
    then the solution $(\vr, h, n)$ constructed above obeys the positivity
    \begin{equation}
    	\begin{aligned}
    		\vr (t, x), h(t,x), n(t,x) \geq 0 \,.
    	\end{aligned}
    \end{equation}
    Furthermore, if the initial data are also assumed
    \begin{equation}\label{IC-Average-a}
    	\begin{aligned}
    		\int_{\T^3} \vr_a^{in} (x) \d x = (2 \pi)^3 \vr_0 \geq 0 \,, \ \int_{\T^3} n_a^{in} (x) \d x =  0
    	\end{aligned}
    \end{equation}
    for any constant $\vr_0 \geq 0$, then there hold
    \begin{equation}\label{Consv-ADEECP}
    	\begin{aligned}
    		\int_{\T^3} \vr (t,x) \d x = (2 \pi)^3 \vr_0 \geq 0 \,, \ \int_{\T^3} n (t,x) \d x = 0 \quad \textrm{ for all } t \in [0,T] \,.
    	\end{aligned}
    \end{equation}

    \item {\bf (Long time existence around $(\vr_a, h_a, 0)$)} Consider the steady states $(\vr_a, h_a, 0)$ of \eqref{AD-EECP} with $h_a = \frac{\alpha}{\beta} \vr_a$ and $0 \leq \vr_a < \Lambda_b$, where the constant $\Lambda_b > 0$ is given in \eqref{Lambda-b}. Under the initial assumptions \eqref{IC-A1}-\eqref{IC-A2}-\eqref{IC-Average-a} with $\vr_0 = \vr_a \in [0, \Lambda_b )$, there is a small constant $\vartheta_0 > 0$, depending only on the all coefficients and $s$, such that if the initial energy
    \begin{align}\label{equation 1.18}
      \mathbf{E}_s^{in} : = \mathbf{E}_s ( \vr^{in}_a - \vr_a, h^{in}_a - h_a , n^{in}_a) \leq \vartheta_0 \,,
    \end{align}
    then the solution $(\vr, h, n) (t, x)$ to the Cauchy problem \eqref{AD-EECP}-\eqref{IC-AD-EECP} constructed in the part $(1)$ can be globally extended. Moreover, $n(t,x) \equiv 0$ and there holds
   \begin{align*}
	  \sup_{ t \geq 0 } \mathbf{E}_s ( \vr - \vr_a, h - h_a, n ) + \int_0^\infty \mathbf{D}_s ( \vr - \vr_a, h - h_a, n ) \d t \leq \mathcal{C}^*_2 \mathbf{E}_s^{in}
   \end{align*}
   for some constant $ \mathcal{C}^*_2 > 0 $, depending only on the all coefficients and $s$.
  \end{enumerate}
\end{theorem}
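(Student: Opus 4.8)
The plan is to prove all three parts by the energy method in $H^s_x$, organized around the weighted dissipation that the nonlinear diffusion $\Delta_x(\widetilde{D}(h)\vr)$ produces after a single integration by parts; the hypothesis $s\ge 4$ guarantees the embedding $H^s_x\hookrightarrow L^\infty_x$ and the Moser product/composition estimates with one derivative to spare. First I would construct solutions by a linearized iteration: given $(\vr^m,h^m,n^m)$, let $(\vr^{m+1},h^{m+1},n^{m+1})$ solve the linear parabolic system obtained by freezing the coefficient $\widetilde{D}(h^m)$ and the reaction factors $n^m,\vr^m$; each iterate is a standard linear parabolic Cauchy problem on $\T^3$, solvable in $L^\infty_t H^s_x\cap L^2_t H^{s+1}_x$. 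The core is a uniform a priori estimate: applying $\p_x^k$ ($|k|\le s$) to the $\vr$-equation, pairing with $\p_x^k\vr$ and integrating by parts once, the principal part becomes $-\|\p_x^k\nabla_x\vr\|_{L^2_x(\widetilde{D})}^2$, which is exactly the weighted term in $\mathbf{D}_s$ and is coercive since $\widetilde{D}\ge d>0$ by (H2). Treating the $h$- and $n$-equations likewise yields $\|\nabla_x h\|_{H^s_x}^2+\|h\|_{H^s_x}^2$ and $\|\nabla_x n\|_{H^s_x}^2+\|n\|_{H^s_x}^2$. I would then bound the commutators and reactions by $P(\mathbf{E}_s)\,\mathbf{E}_s$ plus a small multiple of $\mathbf{D}_s$ that is absorbed, closing $\tfrac{\d}{\d t}\mathbf{E}_s+\mathbf{D}_s\lesssim P(\mathbf{E}_s)$ and producing a local time $T$ by a continuity argument; contraction of the iterates in $H^{s-1}_x$ then gives existence and uniqueness.

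\noindent\textbf{Part (2): positivity and conservation.} For positivity I would run a simultaneous estimate on the negative parts $(\vr_-,h_-,n_-)$: multiplying each equation by its own negative part, the diffusion terms carry the dissipative sign and every reaction term is bounded, using the $L^\infty_x$ control from Part (1), by $C(\|\vr_-\|_{L^2_x}^2+\|h_-\|_{L^2_x}^2+\|n_-\|_{L^2_x}^2)$; since these vanish at $t=0$, Gr\"onwall forces them to vanish on $[0,T]$. For the conservation laws, integrating the $n$-equation over $\T^3$ and using periodicity gives $\tfrac{\d}{\d t}\int_{\T^3}n\,\d x=-\xi\int_{\T^3}\vr n\,\d x$; together with $n\ge 0$ and $\int_{\T^3}n_a^{in}\,\d x=0$ this forces $n_a^{in}\equiv 0$, hence $n\equiv 0$ by uniqueness. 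The source $\gamma n\vr$ then vanishes, and integrating the $\vr$-equation gives $\int_{\T^3}\vr(t)\,\d x=(2\pi)^3\vr_0$.

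\noindent\textbf{Part (3): global existence near $(\vr_a,h_a,0)$.} Since $n\equiv 0$ by Part (2), the problem reduces to the $(\vr,h)$ subsystem. With $\tilde\vr=\vr-\vr_a$, $\tilde h=h-h_a$ (both of zero average, so Poincar\'e with constant $\mathcal{C}_p$ applies), the linearization produces the cross term $\vr_a\widetilde{D}'(h_a)\Delta_x\tilde h$ in the $\tilde\vr$-equation. The decisive fact is that the quadratic form assembled from $-d\|\nabla_x\tilde\vr\|^2-D_h\|\nabla_x\tilde h\|^2-\beta\|\tilde h\|^2$ against the indefinite cross terms (estimated by $|\widetilde{D}'|\le b$ together with Poincar\'e and the coupling $\alpha\langle\tilde\vr,\tilde h\rangle$) is negative definite exactly when $\vr_a<\Lambda_b=\tfrac{4\beta d^{3/2}D_h}{\alpha^2\mathcal{C}_p^2 b}$ — this is Lemma~\ref{Lmm-Dissi}. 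Propagating this coercivity to order $s$ and dominating the genuinely nonlinear terms by $\mathbf{E}_s^{1/2}\mathbf{D}_s$ yields, for small energy,
\[
  \tfrac{\d}{\d t}\,\mathbf{E}_s(\tilde\vr,\tilde h,0)+c\,\mathbf{D}_s(\tilde\vr,\tilde h,0)\le 0 ,
\]
and a standard continuation argument from $\mathbf{E}_s^{in}\le\vartheta_0$ gives global existence together with the stated uniform bound.

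\noindent\textbf{Main obstacle.} I expect the crux everywhere to be the top-order treatment of $\Delta_x(\widetilde{D}(h)\vr)$: after the single integration by parts the worst term $\vr\,\p_x^k\nabla_x\widetilde{D}(h)$ with $|k|=s$ carries $s+1$ derivatives of $h$ and appears to lose a derivative. This is resolved by the ``extra a priori estimate'' flagged in the introduction — via the chain rule, the bounds in (H2) and the auxiliary functional $\mathbb{E}_s(h)$, one controls $\|\widetilde{D}(h)\|_{\dot{H}^{s+1}_x}$ by the $h$-dissipation $\|\nabla_x h\|_{H^s_x}$; paired against $\p_x^k\nabla_x\vr$, which lives in the $\vr$-dissipation, this borderline term is precisely of the form $\mathbf{E}_s^{1/2}\mathbf{D}_s$ and closes. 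Making this balance hold uniformly, and in Part (3) compatibly with the sharp threshold $\Lambda_b$, is the main technical point.
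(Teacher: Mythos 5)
Your parts (2) and (3) follow essentially the paper's own route: the paper proves positivity by a Perthame-style cutoff/contradiction argument rather than by testing with negative parts, but your Stampacchia variant is an equally standard alternative, and your part (3) (reduction to $n\equiv 0$ via positivity plus zero average, fluctuation equations, Poincar\'e, and the coercivity threshold of Lemma \ref{Lmm-Dissi}) is exactly what the paper does. The genuine problem is in part (1), and it sits precisely where you locate your ``main obstacle.'' Your proposed resolution does not close for \emph{large} data: bounding the top-order term $\l \vr\, \nabla_x \p_x^k \widetilde{D}(h), \nabla_x \p^k_x \vr \r_{L^2_x}$ by $\|\vr\|_{L^\infty_x} \|\nabla_x \widetilde{D}(h)\|_{H^s_x} \|\nabla_x \p^k_x \vr\|_{L^2_x}$ and then controlling $\|\nabla_x \widetilde{D}(h)\|_{H^s_x}$ by a polynomial of the energy times $\|\nabla_x h\|_{H^s_x}$ (chain rule plus (H2)) produces a right-hand side of the form $\mathbf{E}_s^{1/2}\mathbf{D}_s$, exactly as you say — but a differential inequality of the type $\tfrac{\d}{\d t}E + D \leq C E^{1/2} D + P(E)$ yields a lifespan only when $E^{1/2}(0)$ is small enough for $CE^{1/2}D$ to be absorbed into $D$; since $D$ carries one more derivative than $E$, it cannot be dominated by $P(E)$, and no Gr\"onwall argument closes it with large data. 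This is precisely the failed structure \eqref{Abst-0} that the paper isolates in its introduction, so your claimed conclusion ``$\tfrac{\d}{\d t}\mathbf{E}_s+\mathbf{D}_s\lesssim P(\mathbf{E}_s)$'' is not attainable by the estimates you describe.

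The paper's actual device is different in one crucial respect: the borderline term is retained as $E_l(t)\,\mathbb{D}_l(t)$ (via $E_l^{1/2} D_l^{1/2} \mathbb{D}_l^{1/2} \leq \tfrac{1}{2} D_l + \tfrac{1}{2} E_l \mathbb{D}_l$, with $\mathbb{D}_l = D_h \|\nabla_x \widetilde{D}(h)\|^2_{H^s_x}$), and — rather than converting $\mathbb{D}_l$ back into the $h$-dissipation — one derives a \emph{second, independent} inequality $\tfrac{\d}{\d t}\mathbb{E}_l + \mathbb{D}_l \leq C(1+E_l^{s+1})$ from the parabolic evolution \eqref{equation 5.5} of $\widetilde{D}(h)$ itself, obtained by multiplying the $h$-equation by $\widetilde{D}'(h)$ (this is also where $s\geq 4$ is genuinely used, through the term $K_{43}$). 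On the bootstrap interval where $E_l \leq \mathcal{C}_3 E_l^{in}$, the second inequality gives the a priori bound $\int_0^t \mathbb{D}_l \,\d\tau \leq \mathbb{E}_l^{in} + C\big(1+(\mathcal{C}_3 E_l^{in})^{s+1}\big)t$, and Gr\"onwall applied to the first then yields $E_l(t) \leq E_l^{in}\exp\big\{C\mathbb{E}_l^{in} + C(\cdots)t\big\}$, hence a strictly positive lifespan for arbitrarily large data. Note the paper's explicit warning that the two inequalities ``cannot be simply added together'': adding them, or trading $\mathbb{D}_l$ for $D_l$ as in your sketch, recreates \eqref{Abst-0}. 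To repair your part (1), replace your absorption step by this two-tier Gr\"onwall/bootstrap argument of Lemma \ref{Lmm-AD} and Subsection \ref{Sec: Local-Result to the limit equation}.
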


\begin{remark}\label{Rmk1-3}
	When constructing the large local existence of the system \eqref{AD-EECP}, we need an extra energy inequality associated with the evolution of $\widetilde{D} (h)$ (see \eqref{equation 5.5}). In deriving the extra energy inequality, there is quantity $K_{43} = 2 D_h \l \widetilde{D} '' (h) \p^k_x h \Delta_x h , \p^k_x \widetilde{D} (h) \r$ (see \eqref{K43}), which will be bounded by $\| h \|_{H^s_x} \| \Delta_x h \|_{H^2_x} ( 1 + \| h \|^{s-1}_{H^{s-1}_x} ) \| \nabla_x h \|_{H^{s-1}_x} \lesssim (1 + \| h \|^{s-1}_{H^s_x}) \| h \|^2_{H^s_x}$. Here we will use the inequality $\| \Delta_x h \|_{H^2_x} \leq \| h \|_{H^s_x}$, so the Sobolev index $s \geq 4$ is required.
\end{remark}

\begin{remark}\label{Rmk1-4}
    As shown in Lemma \ref{Lmm-AD-PhiPsi} below, there are two quantities $\gamma \int_{ \T^3 } [\vr (t,x) - \vr_a]^2 n(t,x) \d x$ and $- \xi \int_{ \T^3 } [\vr (t,x) - \vr_a] [n (t,x)]^2 \d x$, which should be bounded by $\mathbf{E}_s^\frac{1}{2} (\vr - \vr_a, h-h_a, n) \mathbf{D}_s (\vr - \vr_a, h-h_a, n)$ when proving the long time existence of \eqref{AD-EECP} around the steady states $(\vr_a, h_a, 0)$ with $h_a = \frac{\alpha}{\beta} \vr_a$ and $\vr_a \in [0, \Lambda_b )$ in the energy method framework. In this process, the averages $\frac{1}{|\T^3|} \int_{ \T^3 } \vr (t,x) \d x$ and $\frac{1}{|\T^3|} \int_{ \T^3 } n (t,x) \d x$ should be consistent with the steady states $(\vr_a, h_a, 0)$ above, which ensure the validity of Poincar\'e inequality applied in functions $\vr (t,x) - \vr_a$ and $n(t,x)$. Hence one needs
    \begin{equation}\label{Average-a}
    	\begin{aligned}
    		\int_{ \T^3 } \vr (t,x) \d x = (2 \pi)^3 \vr_a \geq 0 \,, \int_{ \T^3 } n (t,x) \d x = 0 \,.
    	\end{aligned}
    \end{equation}
The previous conserved equality is implied by \eqref{Consv-ADEECP} with $\vr_0 = \vr_a \geq 0$ under the initial conditions \eqref{IC-A1}, \eqref{IC-A2} and \eqref{IC-Average-a}. But the zero-average condition $\int_{ \T^3 } n (t,x) \d x = 0$ in \eqref{Average-a} and the positivity $n (t,x) \geq 0$ imply that $n(t,x) \equiv 0$ and $(\vr, h) (t,x)$ subjects to
\begin{equation}\label{rho-h}
	\left\{
	  \begin{aligned}
	  	\p_t \vr = \Delta_x ( \widetilde{D} (h) \vr )  \,, \\
	  	\p_t h = D_h \Delta_x h + \alpha \vr - \beta h \,.
	  \end{aligned}
	\right.
\end{equation}
Therefore, under the initial conditions \eqref{IC-A1}-\eqref{IC-A2}-\eqref{IC-Average-a} with $\vr_0 = \vr_a \in [0, \Lambda_b )$, the long time existence of \eqref{AD-EECP} around the steady states $(\vr_a, h_a, 0)$ above is reasonable. However, the long time stability of $(\vr_a, h_a, 0)$ without initial assumption \eqref{IC-Average-a} is still open.

Furthermore, if one sets $\vr_a = 0$, which means $h_a = 0$, then positivity and conservation laws in Theorem \ref{Thm3} also imply that $\int_{ \T^3 } \vr (t,x) \d x = 0$ and the continuous function $\vr (t,x) \geq 0$. It thereby infers that $\vr (t,x) \equiv 0$ also holds and $h (t,x)$ obeys
\begin{equation*}
	\begin{aligned}
		\p_t h = D_h \Delta_x h - \beta h \,.
	\end{aligned}
\end{equation*}
This corresponds to the case of \eqref{EECPK} system around the steady state $(0,0,0)$, see Theorem \ref{Thm1}.
\end{remark}

\begin{remark}\label{Rmk1-5}
	Formally, as in Subsection \ref{Subsec:Formal}, \eqref{AD-EECP} system is the limit of \eqref{EECPK} system when the response speed $\eps$ of intracellular CheZ to the external signal AHL goes to zero. In Theorem \ref{Thm1}, the global-in-time solution to \eqref{EECPK} equations under initial assumptions \eqref{IC-K1}-\eqref{IC-K2}-\eqref{IC-Average} around ONLY the steady state $(0,0,0)$ is proved. However, in Theorem \ref{Thm3}, the limit \eqref{AD-EECP} equations admit the global-in-time solutions under initial hypotheses \eqref{IC-A1}-\eqref{IC-A2}-\eqref{IC-Average-a} around the steady states $(\vr_a, h_a, 0)$ with $h_a = \frac{\alpha}{\beta} \vr_a$ and $ \vr_a \in [0, \Lambda_b ) $. It is intuitively confused that, compared with \eqref{EECPK} equations, the \eqref{AD-EECP} model admits some more global solutions near non-zero steady states $(\vr_a, h_a, 0)$ with $h_a = \frac{\alpha}{\beta} \vr_a$ and $0 < \vr_a < \Lambda_b $. The reason is the assumed smooth approximate of $L(h)$ in \eqref{L(h)-m}, which leads to the well-defined constant $b > 0$ in \eqref{equation 1.12}. Actually, $b > 0$ depends on $\ell > 0$, the modified length near the jumped point $\bar{h}$ of $L(h)$, see \eqref{L(h)-m} or Figure \ref{Fig-MAAI-CheZ}. Moreover, $b \to + \infty$ as $\ell \to 0$, which implies that $\Lambda_b \to 0$ as $\ell \to 0$. Together with $L_\ell (h) \to L (h)$ as $\ell \to 0$, the global solutions of \eqref{AD-EECP} near non-zero steady states $(\vr_a, h_a , 0)$ with $h_a = \frac{\alpha}{\beta} \vr_a$ and $\vr_a \in (0, \Lambda_b)$ are consistent with that of \eqref{EECPK} near $(0,0,0)$ in the sense of asymptotic behavior $\ell \to 0$.
\end{remark}


\subsection{Main ideas and sketch of proofs}

For the \eqref{EECPK} model, one of the main difficulties come from the $z$-flux term $- \tfrac{1}{\eps} \p_z (g (z, h) \rho)$ associated with the intracellular dynamic of CheZ protein. This term is the main novelty of the model proposed in \cite{XXT-2018-CB}. In particular, we need $(x,v)$-derivative energy estimates. This is different with many kinetic equation, for example, the classical Boltzmann equation, for which the pure spacial derivative energy estimate is available, see \cite{Guo-Indiana} and \cite{JXZ-Indiana}.

In the proof of positivity, the similar approaches to justify the positivity of weak solutions of reaction kinetic equations in Perthame's book \cite{Perthame-2015-BOOK} are employed. More precisely, based on the continuous solution constructed in part (1) of Theorem \ref{Thm1}, let $\Omega_-$ be the set of the points $(t,x,z)$ such that $\rho (t,x,z) < 0$ (see \eqref{Omega-}, Subsection \ref{Subsec-2-3} below). By contradiction arguments, the continuity of $\rho (t,x,z)$ with positive initial data $\rho^{\eps, in} \geq 0$ and the Gr\"onwall inequality imply $\Omega_- = \emptyset$, which concludes $\rho (t,x,z) \geq 0$. The positivity of $h (t,x)$ and $n (t,x)$ can be proved by directly employing Lemma \ref{Lmm-Perthame} constructed in \cite{Perthame-2015-BOOK}.

A basic physical view is to study the conservation laws of a system, such as mass, moment and energy. In the \eqref{EECPK} system, $\iint_{ \T^3 \times \T_w } \rho (t,x,z) \d z \d x + \frac{\gamma}{\xi} \int_{ \T^3 } n (t,x) \d x$ is the only conserved quantity without any assumption, i.e.,
\begin{equation*}
	\begin{aligned}
		\tfrac{\d}{\d t} \left( \iint_{ \T^3 \times \T_w } \rho (t,x,z) \d z \d x + \tfrac{\gamma}{\xi} \int_{ \T^3 } n (t,x) \d x \right) = 0 \,,
	\end{aligned}
\end{equation*}
derived from the constitutive of system. However, this summation cannot be separated directly. Under the positivity of $\rho$ and $n$, $\frac{\d}{\d t} \int_{ \T^3 } n (t,x) \d x \leq 0$. Together with the initial hypothesis $\int_{ \T^3 } n^{\eps, in} (x) \d x = 0$, one concludes $\int_{ \T^3 } n (t,x) \d x = 0$. Then the total mass of $\rho (t,x,z)$ is conserved.

As illustrated in Remark \ref{Rmk1-1}, $(0,0,0)$ is the unique continuous steady state such that the linearized model of \eqref{EECPK} system near $(0,0,0)$ is stable (see Appendix \ref{Appendix} below). However, while proving the global-in-time solution of \eqref{EECPK} near $(0,0,0)$, the conditions $\iint_{\T^3 \times \T_w} \rho (t,x,z) \d z \d x = \int_{ \T^3 } n (t,x) \d x = 0$ are required, see Remark \ref{Rmk1-1} above. Together with positivity of the continuous functions $\rho$ and $h$, $\rho (t,x,z) = n (t,x) = 0$. In this situation, the \eqref{EECPK} system reduces to the linear parabolic equation \eqref{Linear-h} of $h$. Therefore, the global existence of \eqref{EECPK} equations around $(0,0,0)$ can be constructed under initial hypotheses \eqref{IC-K1}-\eqref{IC-K2}-\eqref{IC-Average} with $\rho_0 = 0$.

For the Cauchy problem \eqref{AD-EECP}-\eqref{IC-AD-EECP}, the nonlinearity of the diffusion term $\Delta_x ( \widetilde{D} (h) \vr )$ will generate some difficulties in proving the local existence with large initial data. Generally speaking, in order to prove the local well-posedness with large initial data by employing the Gr\"onwall inequality, one often should obtain the following type energy differential inequality
\begin{equation*}
  \begin{aligned}
    \tfrac{\d}{\d t} E_l (t) + D_l (t) \leq P (E_l (t))
  \end{aligned}
\end{equation*}
for some strictly increasing function $P (\cdot) : \R^+ \rightarrow \R^+$ with $P (0) = 0$. However, it is difficult to obtain this type of inequality to the \eqref{AD-EECP} equations. One will specifically show this difficulty based on the $L^2$-estimates, since the higher order derivatives estimates correspond to the similar structures. More precisely, when taking $L^2$-inner product of the diffusion term $\Delta_x ( \widetilde{D} (h) \vr )$ by dot with $\vr$, there will generate a unsigned quantity $\l \nabla_x \widetilde{D} (h) \cdot \vr, \nabla_x \vr \r_{L^2_x}$ (see \eqref{l1}). A simple observation implies that the unsigned quantity can be bounded by $\| \vr \|_{L^\infty_x} \| \nabla_x h \|_{L^2_x} \| \nabla_x \vr \|_{L^2_x (\widetilde{D})}$, which corresponds to the quantity $\| \vr \|_{L^\infty_x} \| \nabla_x \p^k_x h \|_{L^2_x} \| \nabla_x \p^k_x \vr \|_{L^2_x (\widetilde{D})}$ in the higher order derivatives estimates for $1 \leq |k| \leq s$. The norm $\| \rho \|_{L^\infty_x}$ can be dominated by the energy $E_l^\frac{1}{2} (t)$ and $\| \nabla_x h \|_{L^2_x} \| \nabla_x \vr \|_{L^2_x (\widetilde{D})}$ can only be controlled by the dissipative rate $D_l (t)$ (see \eqref{AD-Loc-Ener}). If one does as the way above, the following type energy differential inequality can be obtained
\begin{equation}\label{Abst-0}
  \begin{aligned}
    \tfrac{\d}{\d t} E_l (t) + D_l (t) \leq E_l^\frac{1}{2} (t) D_l (t) + P (E_l (t)) \,.
  \end{aligned}
\end{equation}
In order to control the quantity $E_l^\frac{1}{2} (t) D_l (t)$, the small size of $E_l^\frac{1}{2} (t) $ must be required. Therefore, the small initial data should be imposed, which is NOT what we expected in the proof of local existence. One of our {\em novelties} is to seek an extra energy inequality to overcome this difficulty. To be more precise, we introduce extra energy $\mathbb{E}_l (t) = \| \widetilde{D} (h) \|^2_{H^s_x}$ and extra dissipative rate $\mathbb{D}_l (t) = D_h \| \nabla_x \widetilde{D} (h) \|^2_{H^s_x}$. Then the unsigned quantity $\l \nabla_x \widetilde{D} (h) \cdot \vr, \nabla_x \vr \r_{L^2_x}$ can be bounded by $E_l^\frac{1}{2} (t) D_l^\frac{1}{2} (t) \mathbb{D}_l^\frac{1}{2} (t)$ ($\leq \tfrac{1}{2} D_l (t) + \tfrac{1}{2} E_l (t) \mathbb{D}_l (t)$), so that we obtain the following type energy inequality
\begin{equation}\label{Abst-1}
  \begin{aligned}
    \tfrac{\d}{\d t} E_l (t) + D_l (t) \leq E_l (t) \mathbb{D}_l (t) + P (E_l (t)) \,.
  \end{aligned}
\end{equation}
One further focuses on the $\widetilde{D} (h)$-evolution in \eqref{equation 5.5}, i.e.,
\begin{equation*}
  \begin{aligned}
    \p_t \widetilde{D} (h) = D_h \Delta_x \widetilde{D} (h) - D_h \widetilde{D}''(h) |\nabla_x h|^2 - \beta \widetilde{D}' (h) h + \alpha \vr \widetilde{D}' (h) \,.
  \end{aligned}
\end{equation*}
One can thereby derive the extra energy inequality
\begin{equation}\label{Abst-2}
  \begin{aligned}
    \tfrac{\d}{\d t} \mathbb{E}_l (t) + \mathbb{D}_l (t) \leq Q (E_l (t)) \,.
  \end{aligned}
\end{equation}
The Sobolev index $s \geq 4$ is actually required here. Thanks to the energy inequalities \eqref{Abst-1} and \eqref{Abst-2}, one can derive the local existence with large initial data, for details see Subsection \ref{Sec: Local-Result to the limit equation} below. We emphasize that the inequalities \eqref{Abst-1} and \eqref{Abst-2} cannot be simply added together. Otherwise, we will get an inequality similar to the structure of the inequality \eqref{Abst-0}.

For the positivity and conservation laws of the \eqref{AD-EECP} system, the arguments are the almost same as that in the \eqref{EECPK} system. The continuous steady states of \eqref{AD-EECP} are $(\vr_a, h_a, 0)$ with $h_a = \frac{\alpha}{\beta} \vr_a$ and $(0,0,n_a)$ with $n_a > 0$. As shown in Appendix \ref{Appendix}, the linearized model around $(0,0,n_a)$ is unstable. Our goal is therefore to prove the global existence of the \eqref{AD-EECP} system near $(\vr_a, h_a, 0)$ with $h_a = \frac{\alpha}{\beta} \vr_a \geq 0$. As shown in Remark \ref{Rmk1-4}, the conditions $\int_{ \T^3 } \vr (t,x) - \vr_a \d x = \int_{ \T^3 } n (t,x) \d x = 0$ are needed. These conditions hold under initial hypotheses \eqref{IC-A2}-\eqref{IC-Average-a} with $\vr_0 = \vr_a \geq 0$. By the further continuity and positivity of $n (t,x)$, one has $n (t,x) \equiv 0$. Then the \eqref{AD-EECP} model reduces to \eqref{rho-h} for $(\vr, h)$. By defining the fluctuation \eqref{Fluctuation}, i.e., $\vr (t,x) = \vr_a + \Phi (t,x) \,, \ h (t,x) = h_a + \Psi (t,x) $, $( \Phi, \Psi )$ subjects to the equations \eqref{AD-PhiPsi}. However, not all $\vr_a \geq 0$ makes the $(\Phi, \Psi)$-equations dissipative. Lemma \ref{Lmm-Dissi} indicates that only if $0 \leq \vr_a < \Lambda_b$ ($\Lambda_b$ is defined in \eqref{Lambda-b}), the equations \eqref{AD-PhiPsi} will be dissipated. At the end, in Lemma \ref{Lmm-AD-PhiPsi}, the induction approach for the orders of derivatives shall be employed to close the energy differential inequality, namely to obtain the inequality
\begin{equation*}
  \begin{aligned}
    \tfrac{1}{2} \tfrac{\d}{\d t} E_g (t) + D_g (t) \leq C ( 1 + E_g^\frac{s-1}{2} (t) ) E_g^\frac{1}{2} (t) D_g (t)
  \end{aligned}
\end{equation*}
for some instant energy functional $E_g (t)$ and dissipative rate $D_g (t)$. Based on the continuity arguments, one can thereby verify the small global well-posedness of \eqref{AD-EECP} system near $(\vr_a, h_a, 0)$ with corresponding small initial perturbed data under the hypotheses \eqref{IC-A1}-\eqref{IC-A2}-\eqref{IC-Average-a} with $\vr_0 = \vr_a \in [0, \Lambda_b)$.


\subsection{Organizations of current paper}

In the next section, the goal is to study the local well-posedness of the \eqref{EECPK} model with initial data \eqref{IC-EECPK}. The a priori estimates in Lemma \ref{lemma 2.1} is first derived. Based on the a priori estimates, one proves the large local solution by continuity arguments. Then the positivity and conservation laws of the \eqref{EECPK} system is justified. The global existence near $(0,0,0)$ is finally constructed in the hypotheses \eqref{IC-K1}-\eqref{IC-K2}-\eqref{IC-Average} with $\rho_0 = 0$. In Section \ref{Sec:AD-EECP}, the well-posedness of the \eqref{AD-EECP} with initial data \eqref{IC-AD-EECP} is proved. There are two separated a priori estimates first derived in Lemma \ref{Lmm-AD}. The continuity arguments can conclude the local existence of large local solution. Then the positivity and conservation laws of \eqref{AD-EECP} system is verified. Finally, the global-in-time solution near $(\vr_a, h_a, 0)$ with $\vr_a \in [0, \Lambda_b)$ is constructed under the hypotheses \eqref{IC-A1}-\eqref{IC-A2}-\eqref{IC-Average-a} with $\vr_0 = \vr_a$. In Appendix \ref{Appendix}, one shows that near $(0, 0, n_0)$ with $n_0 > 0$, the linearized models of \eqref{EECPK} and \eqref{AD-EECP} systems are both unstable.


\section{Well-posedness of \eqref{EECPK} model: Proof of Theorem \ref{Thm1}}

In this section, we devote to prove Theorem \ref{Thm1}. More precisely, we will employ the energy method to prove the local in time existence in Sobolev space with large initial data. Then the positivity of the solution will be checked. Under the positivity, the conservation law of mass \eqref{Consv-KEECP} will be proved provided that the initial condition \eqref{IC-Average} holds. Finally, based on the conservation law \eqref{Consv-KEECP} and under the further coefficients constraint \eqref{IC-K1}-\eqref{IC-K2}-\eqref{IC-Average} with $\rho_0 = 0$ before, the long time stability of the trivial steady state $(0,0,0)$ will be verified.

\subsection{A priori estimates for \eqref{EECPK} model}\label{A_Priori}
In this subsection, the a priori estimate of the system \eqref{EECPK} will be accurately derived from employing the energy method. We now introduce the following energy functional $E_L(t)$ and energy dissipative rate functional $D_L(t)$:
\begin{equation}\label{Loc-Energ}
\begin{aligned}
   E_{L} (t) & = \| \rho \|^2_{H^s_{x,z}} + \tfrac{1}{\eta+1} \| \vr \|^2_{H^s_{x}} + \| h \|^2_{H^s_{x}} + \| n \|_{H^{s}_{x}}^2 \,, \\
    D_{L} (t) & = \| \nabla_x \rho \|^2_{H^s_{x,z}(D)} + \tfrac{d}{2(\eta+1)} \| \nabla_x \vr \|^2_{H^s_{x}} +  D_h \| \nabla_x h \|^2_{H^s_{x}} + \tfrac{\beta}{2} \| h \|^2_{H^s_{x}} + \ D_n \| \nabla_x n \|_{H^{s}_{x}}^2 \,,
\end{aligned}
\end{equation}
where the constant $\eta \geq 0$ is given in \eqref{equation 1.7}. It is easy to see that
\begin{equation*}
	\begin{aligned}
		E_L (t) \thicksim \mathscr{E}_s (\rho, h, n) (t) \,, \ D_L (t) \thicksim \mathscr{D}_s (\rho, h, n) (t) \,,
	\end{aligned}
\end{equation*}
where the functional $\mathscr{E}_s (\rho, h, n)$ and $ \mathscr{D}_s (\rho, h, n)$ are defined in \eqref{Es} and \eqref{Ds}, respectively.

\begin{lemma}\label{lemma 2.1}
Let $s \geq 3$ be any fixed integer. Assume that $(\rho(t,x,z), h(t,x), n(t,x) )$ is a sufficiently smooth solution to system \eqref{EECPK} on the interval $[0,T]$. Then there is a positive constant $C=C(k_V, c_1, c_2, a, d, \alpha, \beta, \gamma, \xi, s)>0$, independent of $\eps$, such that
\begin{align*}
  \tfrac{\d}{\d t} E_{L} (t) + D_{L} (t) \leq C (1 + \tfrac{1}{\eps} ) ( 1 + E_L^{\frac{s}{2}} (t) ) E_{L} (t)
\end{align*}
 holds for all $t\in[0,T]$.
\end{lemma}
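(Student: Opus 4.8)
The plan is to run the energy method simultaneously on the whole coupled system. Concretely, I apply $\p_z^l \p_x^k$ with $l+|k|\le s$ to the $\rho$-equation and $\p_x^k$ with $|k|\le s$ to the $h$- and $n$-equations, and derive the $\vr$-equation by integrating the $\rho$-equation over $\T_w$ (the $z$-flux integrates to zero by periodicity). Pairing each differentiated equation with the same derivative of its unknown in $L^2$ and integrating the $x$-diffusion by parts produces the clean parabolic dissipations $-\|\nabla_x\rho\|_{H^s_{x,z}(D)}^2$, $-D_h\|\nabla_x h\|_{H^s_x}^2$, $-D_n\|\nabla_x n\|_{H^s_x}^2$, together with $-\beta\|h\|_{H^s_x}^2$ from the linear sink; these assemble $D_L(t)$ on the left. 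Everything else must then be shown to be either absorbable into $D_L$ or bounded by $(1+\tfrac1\eps)(1+E_L^{s/2})E_L$.

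The reaction products $\gamma n\rho$, $\gamma n\vr$ and $\xi\vr n$ are handled by the $H^s$ algebra and Moser product estimates (valid since $s\ge 3>\tfrac32$): they are bounded by $\lesssim\|n\|_{H^s_x}\|\rho\|_{H^s_{x,z}}^2$ and the like, i.e. by $E_L^{3/2}\le(1+E_L^{s/2})E_L$. The decisive nonlinearity is the flux $-\tfrac1\eps\p_z(g(z,h)\rho)$. Here I exploit the algebraic structure \eqref{equation 1.4}: because $\p_z g\equiv-k_V$ and $\p_z^m g\equiv 0$ for $m\ge 2$, the contributions carrying an extra $z$-derivative beyond what $E_L$ controls, namely $-\tfrac1\eps\l g\,\p_z^{l+1}\p_x^k\rho,\p_z^l\p_x^k\rho\r$, collapse after integration by parts in $z$ (which hits only $g$) to a term bounded by $\tfrac1\eps E_L$ — crucially, with no need for any $z$-dissipation, of which there is none. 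The terms in which $\p_x^{k'}$ ($k'\ne 0$) falls on $g$ through $L(h)$ lower the $x$-order of $\rho$ and hence stay within order $s$; estimating $\|L(h)\|_{H^s_x}\lesssim c_1\big(1+\|h\|_{H^s_x}^{s-1}\big)\|h\|_{H^s_x}$ by the composition (Moser-type) estimate yields exactly the factor $E_L^{s/2}$, and with the prefactor $\tfrac1\eps$ the announced bound $\tfrac1\eps(1+E_L^{s/2})E_L$.

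The variable-coefficient commutators form the technical core. In the $\rho$-diffusion, $\p_z^l(D(z)\Delta_x\rho)$ generates terms $D^{(j)}(z)\Delta_x\p_z^{l-j}\p_x^k\rho$ with $j\ge 1$; integrating by parts in $x$ and then in $z$ (with $|D^{(j)}|\le a$ from \eqref{equation 1.5}) turns each into a gradient of a strictly lower $z$-order derivative, so the family is lower-triangular in the number of $z$-derivatives. Starting from the pure-$x$ level $l=0$, where $\p_x^k(D(z)\Delta_x\rho)=D(z)\Delta_x\p_x^k\rho$ produces no commutator at all, these are closed by Young's inequality together with a suitable large weighting of the hierarchy in $l$ (using $D(z)\ge d$), so that each commutator is absorbed by dissipation at a strictly lower level; the resulting constant depends on $a$ and $d$. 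The most delicate balance is the $\vr$-coupling: writing $\int_{\T_w}D(z)\rho\,\d z=d\,\vr+\int_{\T_w}(D(z)-d)\rho\,\d z$, the leading $d\Delta_x\vr$ gives clean dissipation, while the correction, estimated by Cauchy–Schwarz in $z$, is controlled by $d\sqrt{\eta}\,\|\nabla_x\rho\|_{L^2_{x,z}}\|\nabla_x\vr\|_{L^2_x}$ with $\eta$ from \eqref{equation 1.7}. This is exactly why $\|\vr\|_{H^s_x}^2$ is weighted by $\tfrac1{\eta+1}$: Young's inequality splits the correction so that half is absorbed by the $\vr$-dissipation (leaving the coefficient $\tfrac{d}{2(\eta+1)}$ in $D_L$) and the other half, of size $\tfrac{d\eta}{2(\eta+1)}\|\nabla_x\rho\|_{L^2_{x,z}}^2\le\tfrac12\|\nabla_x\rho\|_{L^2_{x,z}(D)}^2$, is absorbed by half of the $\rho$-dissipation.

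Finally I collect all pieces: the coupling $\alpha\l\vr,h\r_{H^s_x}$ in the $h$-equation is bounded by $CE_L$ (or split by Young against $\tfrac\beta2\|h\|^2$), every nonlinear contribution is bounded by $(1+\tfrac1\eps)(1+E_L^{s/2})E_L$, and all absorbed commutators leave the dissipations in $D_L(t)$ with positive coefficients; summing over $l+|k|\le s$ gives the asserted inequality. I expect the main obstacle to be the simultaneous control of the two borderline mechanisms that possess no matching dissipation — the extra $z$-derivative generated by the flux term (resolved only through the exact identities \eqref{equation 1.4}) and the $z$-averaged cross-diffusion in the $\vr$-equation (resolved only through the exact $\tfrac1{\eta+1}$ weighting) — and in arranging that both absorptions close at once with a single constant $C$ that remains uniform in $\eps$.
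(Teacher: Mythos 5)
Your proposal is correct and follows essentially the same route as the paper's proof: mixed $(x,z)$-derivative energy estimates forced by the $x$-derivatives of $g(z,h)$ landing on $\p_z\rho$, the collapse of the leading flux contribution via $\p_z g \equiv -k_V$ yielding $\tfrac{k_V}{2\eps}\|\cdot\|^2_{L^2_{x,z}}$, composition estimates on $L(h)$ producing the $E_L^{s/2}$ factor, and the splitting $D(z)=d+(D(z)-d)$ with the exact $\tfrac{1}{\eta+1}$ weighting so that half the cross term is absorbed by the $\vr$-dissipation and the remainder by the weighted $\rho$-dissipation. The only cosmetic deviation is your hierarchy-weighted absorption of the $\p_z^j D(z)$ commutators into lower-level dissipation, where the paper simply notes these terms have total order at most $s$ and bounds them directly by $C\mathscr{E}_s$; both close the estimate.
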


Before proving the conclusions of Lemma \ref{lemma 2.1}, we first introduce the following lemma, which will be frequently used latter.
\begin{lemma}\label{Lmm-g-norms}
	Assume $g (z,\omega)$ satisfies the hypothesis $(\mathrm{H}1)$. Let integer $s \geq 3$. Then there hold
	\begin{equation}\label{g-derivative}
		\p_z^l \p^k_x g (z, h(t,x)) =
		\left\{
		  \begin{aligned}
		  	0 \,, \qquad & \quad l \geq 2 \,, k \in \mathbb{N}^3 \textrm{ or } l = 1 \,, k \in \mathbb{N}^3 \setminus \{0\} \,, \\
		  	- k_V \,, \quad & \quad l = 1 \,, k = 0 \in \mathbb{N}^3 \,, \\
		  	k_V \partial^k_x L (h) \,, & \quad l = 0 \,, k \in \mathbb{N}^3 \,,
		  \end{aligned}
		\right.
	\end{equation}
	and moreover,
	\begin{equation}
	  \begin{aligned}
	    & \| \p^k_x g (z, h(t,x)) \|_{L^2_x L^\infty_z} \lesssim \| h \|_{H^s_x} + \| h \|^s_{H^s_x} \,, \\
	    & \| \p^k_x g (z, h(t,x)) \|_{L^4_x L^\infty_z} \lesssim \| h \|_{H^{s+1}_x} + \| h \|^s_{H^{s+1}_x} \,, \\
	    & \| \p^k_x g (z, h(t,x)) \|_{L^\infty_{x,z}} \lesssim \| h \|_{H^{s+2}_x} + \| h \|^s_{H^{s+2}_x}
	  \end{aligned}
	\end{equation}
	for $1 \leq |k| \leq s$.
\end{lemma}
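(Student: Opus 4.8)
The plan is to separate the statement into two parts: the explicit derivative identity \eqref{g-derivative}, and the three norm bounds. The identity is a direct consequence of the fact that $g(z,\omega) = k_V L(\omega) - k_V z$ is affine in the internal variable $z$. Since $g$ depends on $z$ only through the linear term $-k_V z$, we have $\partial_z g \equiv -k_V$ and $\partial_z^l g \equiv 0$ for all $l \geq 2$, which is precisely \eqref{equation 1.4}. After substituting $\omega = h(t,x)$ and applying $\partial_x^k$, the constant $-k_V$ is annihilated whenever $|k| \geq 1$, while the spatial derivatives commute with the $z$-derivatives. This yields the three listed cases: $\partial_z^l\partial_x^k g = 0$ when $l \geq 2$ (any $k$) or when $l = 1$ and $k \neq 0$; $\partial_z g = -k_V$ when $l = 1, k = 0$; and $\partial_x^k g(z, h(t,x)) = k_V\,\partial_x^k[L(h(t,x))]$ for the case $l = 0$ (with $|k| \geq 1$ the linear $z$-term drops out, so the displayed formula is to be read in that range).

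For the norm bounds, the crucial observation is that for $|k| \geq 1$ the derivative $\partial_x^k g(z, h(t,x)) = k_V\,\partial_x^k[L(h(t,x))]$ is independent of $z$; hence every $L^\infty_z$-norm reduces to the absolute value, and it suffices to estimate the $L^2_x$, $L^4_x$ and $L^\infty_x$ norms of the composition $\partial_x^k[L(h)]$. First I expand this via the Fa\`a di Bruno (chain rule) formula as a finite sum of terms of the form $L^{(j)}(h)\prod_{i=1}^j \partial_x^{k_i} h$, where $1 \leq j \leq |k|$, each $|k_i| \geq 1$, and $\sum_{i} k_i = k$. Hypothesis $(\mathrm{H}1)$ guarantees that every derivative $L^{(j)}$ of the mollified profile is bounded on $\R$, so $\|L^{(j)}(h)\|_{L^\infty_x} \leq C$, and each summand is controlled by the $L^p_x$-norm of the product $\prod_{i=1}^j \partial_x^{k_i} h$.

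To close the estimates I would use the Gagliardo--Nirenberg inequality in the form $\|\partial_x^{a} h\|_{L^{2m/a}_x} \lesssim \|h\|_{L^\infty_x}^{1 - a/m}\|h\|_{H^m_x}^{a/m}$ with $m = |k|$, together with H\"older's inequality (the exponents $p_i = 2|k|/|k_i|$ satisfy $\sum_i 1/p_i = 1/2$). This gives $\|\prod_{i=1}^j \partial_x^{k_i} h\|_{L^2_x} \lesssim \|h\|_{L^\infty_x}^{j-1}\|h\|_{H^{|k|}_x}$, and the three-dimensional embedding $H^s_x \hookrightarrow L^\infty_x$ (valid since $s \geq 3 > 3/2$) upgrades this to $\lesssim \|h\|_{H^s_x}^{j}$. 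Summing over $1 \leq j \leq |k| \leq s$ and using that the smallest and largest powers dominate for $\|h\|_{H^s_x}$ small and large respectively produces the first bound $\lesssim \|h\|_{H^s_x} + \|h\|_{H^s_x}^s$. For the $L^4_x$ and $L^\infty_x$ versions I would precompose with the three-dimensional embeddings $H^1_x \hookrightarrow L^4_x$ and $H^2_x \hookrightarrow L^\infty_x$, so that $\|\partial_x^k[L(h)]\|_{L^4_x} \lesssim \|L(h)\|_{H^{|k|+1}_x}$ and $\|\partial_x^k[L(h)]\|_{L^\infty_x} \lesssim \|L(h)\|_{H^{|k|+2}_x}$, and then rerun the same Fa\`a di Bruno / Gagliardo--Nirenberg argument at regularity levels $s+1$ and $s+2$ to obtain the stated right-hand sides. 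The main technical obstacle is the bookkeeping in this last step: one must verify that the combinatorics of the chain-rule expansion, the assignment of the interpolation exponents $p_i$, and the Sobolev embeddings all fit together so that the polynomial degree in $\|h\|_{H^s_x}$ never exceeds $s$ --- this is where the restriction $1 \leq |k| \leq s$ and the homogeneity of the Gagliardo--Nirenberg exponents are essential.
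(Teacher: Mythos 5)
The paper does not actually write out a proof of this lemma --- it states only that it ``can be proved directly by the Sobolev embedding theory'' and omits the details --- so your proposal is a legitimate fleshing-out of exactly the intended route: the affine-in-$z$ structure of $g$ for the identity \eqref{g-derivative} (including your correct caveat that the $l=0$ row must be read for $|k| \geq 1$, since for $k=0$ the function $g$ itself retains the $-k_V z$ term), the observation that $\p^k_x g$ is $z$-independent for $|k| \geq 1$ so the $L^\infty_z$ norms are vacuous, and Fa\`a di Bruno plus Moser-type Gagliardo--Nirenberg interpolation for the composition $\p^k_x [L(h)]$. Your $L^2_x$ case is complete and correct as set up: H\"older with $p_i = 2|k|/|k_i|$, the interpolation $\| \p^{k_i}_x h \|_{L^{p_i}_x} \lesssim \| h \|_{L^\infty_x}^{1 - |k_i|/|k|} \| h \|_{H^{|k|}_x}^{|k_i|/|k|}$, the embedding $H^s_x \hookrightarrow L^\infty_x$, and the elementary $x^j \leq x + x^s$ for $1 \leq j \leq s$ to collapse the sum to the two extreme powers.

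The one point where your plan as written would not deliver the stated bounds is the final step. If you first embed $\| \p^k_x [L(h)] \|_{L^4_x} \lesssim \| L(h) \|_{H^{|k|+1}_x}$ and then rerun Fa\`a di Bruno at order $|k|+1$, then for $|k| = s$ the expansion contains the term $L^{(s+1)}(h) \, (\nabla_x h)^{s+1}$, which produces polynomial degree $s+1$ (and, in the $L^\infty$ case at order $s+2$, degree $s+2$), overshooting the stated $\| h \|^s_{H^{s+1}_x}$ and $\| h \|^s_{H^{s+2}_x}$; it moreover requires bounds on $L^{(j)}$ for $j$ up to $s+2$, whereas hypothesis $(\mathrm{H}1)$ via \eqref{equation 1.3} only records $j \leq s$ (harmless in substance, since the mollified $L_\ell$ is $C^\infty$ with all derivatives controlled by \eqref{equation 1.11}, but a mismatch with the hypothesis as stated). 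The fix is simply to reverse the order of operations: expand $\p^k_x [L(h)]$ at order $|k| \leq s$ first, so each summand $L^{(j)}(h) \prod_{i=1}^j \p^{k_i}_x h$ has $j \leq s$ factors, and only then distribute the embeddings factor-wise --- for the $L^4_x$ bound put the factor with the largest $|k_i|$ in $L^4_x$ via $\| \p^{k_i}_x h \|_{L^4_x} \lesssim \| h \|_{H^{|k_i|+1}_x} \leq \| h \|_{H^{s+1}_x}$ and the remaining factors in $L^\infty_x$ via $H^2_x \hookrightarrow L^\infty_x$ (admissible since $j \geq 2$ forces $|k_i| \leq s-1$ there), and for the $L^\infty_{x,z}$ bound put every factor in $L^\infty_x$, using $|k_i| + 2 \leq s+2$. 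This keeps the degree at $j \leq s$, uses only $L^{(j)}$ with $j \leq s$, and yields precisely the stated right-hand sides.
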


Based on the definition of $g (z, \omega)$ in \eqref{g(z,w)}, this lemma can be proved directly by the Sobolev embedding theory. For simplicity of presentation, the details will be omitted here.

\begin{proof}[Proof of Lemma \ref{lemma 2.1}]
We first derive the $L^2$-estimate, which will contain the major structures of the energy functional. Then we estimate the higher order energy bound, which shall be consistent with the structures of $L^2$-estimate. We also emphasize that the properties \eqref{g-derivative} of $g(z,\omega)$ in Lemma \ref{Lmm-g-norms} will also be used frequently. For simplicity, the symbols
$$\mathscr{E}_s : = \mathscr{E}_s (\rho, h, n) \,, \quad \mathscr{D}_s : = \mathscr{D}_s (\rho, h, n) $$
will be employed in the later proof, where $\mathscr{E}_s (\rho, h, n)$ and $\mathscr{D}_s (\rho, h, n)$ are defined in \eqref{Es} and \eqref{Ds}, respectively.

\vspace*{2mm}

{\bf Step 1. $L^2$ estimates.}

\vspace*{2mm}

We take $L^2$-inner product with $\rho(t,x,z)$ on the first equation of system \eqref{EECPK}, and integrate by parts over $x\in \mathbb{T}^3$ and $z\in \T_w$. We thereby have
\begin{align}\label{L2-1}
  \tfrac{1}{2} \tfrac{\d}{\d t} \| \rho \|_{L^2_{x,z}}^2 + \| \nabla_x \rho \|_{L^2_{x,z}(D)}^2 = \underbrace{ - \tfrac{1}{\eps} \l \p_z ( g \rho ) , \rho \r_{L_{x,z}^2}}_{I_1} + \underbrace{ \l \gamma n \rho , \rho \r_{L_{x,z}^2}}_{I_2} \,.
\end{align}
Based on the integration by parts, the H\"older inequality and the property of $g(z,h(t,x))$ in \eqref{g-derivative}, we estimate $I_1$ and $I_2$ as follows:
\begin{align*}
  & I_1 = - \tfrac{1}{\eps} \l \p_z ( g \rho ) , \rho \r_{L_{x,z}^2} = \tfrac{1}{\eps} \l g \rho , \p_z \rho \r_{L_{x,z}^2} = - \tfrac{1}{\eps} \l \p_z g , \tfrac{1}{2} \rho^2 \r_{L_{x,z}^2} = \tfrac{k_V}{2\eps} \| \rho \|_{L^2_{x,z}}^2 \,, \\
  & I_2 = \l \gamma n \rho, \rho \r_{L_{x,z}^2} \leq \gamma \| n \|_{L^2_x} \| \rho \|_{L^4_xL^2_z}^2 \leq C \gamma \| n \|_{L^2_x} \| \rho \|_{H^1_xL^2_z}^2 \,.
\end{align*}
Therefore, we have
\begin{align}\label{equation 2.1}
  \tfrac{1}{2} \tfrac{\d}{\d t} \| \rho \|_{L^2_{x,z}}^2 + \| \nabla_x \rho \|_{L^2_{x,z}(D)}^2 \leq \tfrac{k_V}{2\eps} \| \rho \|_{L^2_{x,z}}^2 + C \gamma \| n \|_{L^2_x} \| \rho \|_{H^1_x L^2_z}^2 \,.
\end{align}
Observe that the mass $\vr (t,x) = \int_{\T_w} \rho (t,x,z) \d z$ obeys the evolution
\begin{equation}\label{Eq-mass}
  \begin{aligned}
    \p_t \vr = \Delta_x \int_{\T_w} D(z) \rho (t, x, z) \d z + \gamma n \vr \,.
  \end{aligned}
\end{equation}
From taking $L^2$-inner product with ${\vr}(t,x)$, integrating by parts over $x\in \mathbb{T}^3$ and splitting $D(z) = d + (D(z) - d)$ (here $d > 0$ is given in \eqref{equation 1.6}), we derive that
\begin{align}\label{L2-2}
  \no& \,\, \tfrac{1}{2} \tfrac{\d}{\d t} \| {\vr} \|_{L^2_{x}}^2 + d \| \nabla_x {\vr} \|_{L^2_{x}}^2 \\
  \no = & - \l \nabla_x \int_{\T_w} ( D(z) - d ) \rho(t,x,z) \d z , \nabla_x \int_{\T_w} \rho(t,x,z) \d z \r_{L_{x}^2} + \gamma \l n {\vr} , {\vr} \r_{L_x^2} \\
  \leq & \| \nabla_x {\vr} \|_{L^2_{x}} \underbrace{ \| \nabla_x \int_{\T_w} (D(z)-d) \rho(t,x,z) \d z \|_{L^2_{x}}}_{I_3} + \underbrace{ \gamma \l n {\vr} , {\vr} \r_{L_x^2}}_{I_4} \,.
\end{align}
From the hypotheses of $D(z)$ in \eqref{equation 1.6}, we easily know that
\begin{align*}
  I_3 & \leq \| \| D(z)-d \|_{L_z^2} \| \nabla_x \rho \|_{L_z^2} \|_{L_x^2} \leq \tfrac{1}{\sqrt{d}} \| D(z)-d \|_{L_z^2} \| \nabla_x \rho \|_{L_{x,z}^2(D)} \,, \\
  I_4 & = \gamma \l n {\vr} , {\vr} \r_{L_{x}^2} \leq \gamma \| n \|_{L^2_x} \| {\vr} \|_{L^4_x}^2 \leq C \gamma \| n \|_{L^2_x} \| {\vr} \|_{H^1_x}^2 \,.
\end{align*}
Moreover, the H\"older inequality implies
\begin{align*}
  \tfrac{1}{\sqrt{d}} \| D(z)-d \|_{L_z^2} \| \nabla_x \rho \|_{L_{x,z}^2(D)} \| \nabla_x {\vr} \|_{L_{x}^2} \leq \tfrac{d}{2} \| \nabla_x {\vr} \|_{L_{x}^2}^2 + \tfrac{\eta + 1}{2}\| \nabla_x \rho \|_{L_{x,z}^2(D)}^2 \,,
\end{align*}
where $\eta : = \tfrac{1}{d^2} \| D(z)-d \|_{L_z^2}^2 \geq 0 $ (see also \eqref{equation 1.7}).
Consequently, we have
\begin{align}\label{equation 2.2}
  \tfrac{1}{2 (\eta + 1) } \tfrac{\d}{\d t} \| {\vr} \|_{L^2_{x}}^2 + \tfrac{ d}{2 (\eta + 1) } \| \nabla_x {\vr} \|_{L^2_{x}}^2 \leq \tfrac{1}{2} \| \nabla_x \rho \|_{L_{x,z}^2(D)}^2 + \tfrac{C \gamma}{ (\eta + 1) } \| n \|_{L^2_x} \| {\vr} \|_{H^1_x}^2 \,.
\end{align}

We then take $L^2$-inner product with $h(t,x)$ on the second equation of system \eqref{EECPK}, and integrate by parts over $x\in \mathbb{T}^3$. We thereby have
\begin{align}\label{equation 2.3}
  \tfrac{1}{2} \tfrac{\d}{\d t} \| h \|_{L^2_{x}}^2 + D_h \| \nabla_x h \|_{L^2_{x}}^2 + \beta \| h \|_{L^2_{x}}^2 = \alpha \l {\vr} , h \r_{L_{x}^2} \leq \tfrac{\alpha^2}{2\beta} \| {\vr} \|_{L^2_{x}}^2 + \tfrac{\beta}{2} \| h \|_{L^2_{x}}^2 \,.
\end{align}
 We further take $L^2$-inner product with $n(t,x)$ on the third equation of system \eqref{EECPK}, and integrate by parts over $x\in \mathbb{T}^3$. We have
\begin{align}\label{equation 2.4}
  \tfrac{1}{2} \tfrac{\d}{\d t} \| n \|_{L^2_x}^2 + D_n \| \nabla_x n \|_{L^2_x}^2 = -\xi \l {\vr} n , n \r_{L_x^2} \leq \xi \| \vr \|_{L^2_x} \| n \|_{H^1_x}^2 \,.
\end{align}
Finally, together with the inequalities $\eqref{equation 2.1}$, $\eqref{equation 2.2}$, $\eqref{equation 2.3}$ and $\eqref{equation 2.4}$, we see
\begin{align}\label{equation 2.5}
  \no \tfrac{1}{2} & \tfrac{\d}{\d t} \Big( \| \rho \|_{L^2_{x,z}}^2 + \tfrac{1}{\eta + 1} \| \vr \|_{L^2_{x}}^2 + \| h \|_{L^2_{x}}^2 + \| n \|_{L^2_{x}}^2 \Big) \\
  + &  \tfrac{1}{2} \| \nabla_x \rho \|_{L^2_{x,z}(D)}^2 + \tfrac{ d}{2(\eta + 1)} \| \nabla_x \vr \|_{L^2_{x}}^2 + D_h \| \nabla_x h \|_{L^2_{x}}^2 + \tfrac{\beta}{2} \| h \|_{L^2_{x}}^2 + D_n \| \nabla_x n \|_{L^2_{x}}^2 \\
  \no \leq & \tfrac{k_V}{2\eps} \| \rho \|_{L^2_{x,z}}^2 +  \tfrac{\alpha^2}{2\beta} \| \vr \|_{L^2_{x}}^2 + C \gamma \| n \|_{L^2_x} \| \rho \|_{H^1_xL_z^2}^2 + \tfrac{C \gamma}{\eta + 1} \| n \|_{L^2_x} \| \vr \|_{H^1_x}^2 + C \xi \| \vr \|_{L^2_x} \| n \|_{H^1_x}^2 \\
  \no \leq & C ( \tfrac{1}{\eps} + \mathscr{E}_s^\frac{1}{2} ) \mathscr{E}_s \,.
\end{align}

\vspace*{2mm}

{\bf Step 2. Higher order derivatives estimates.}

\vspace*{2mm}

The estimates of the higher order derivatives will be divided into two parts: (1) the pure spatial derivatives estimates; (2) the mixed $(x,z)$-derivatives estimates.

\vspace*{2mm}

{\em (1) The pure spatial derivatives estimates.}

\vspace*{2mm}

First, we act $k$-order derivative operator $\p^k_x$ on \eqref{EECPK}${}_1$ for all $1 \leq |k|\leq s$, take $L^2$-inner product by dot with $\p_x^k \rho$ and integrate by parts over $x \in \mathbb{T}^3$ and $z\in \T_w$. We thereby have
\begin{align}\label{H-rho}
  \tfrac{1}{2} \tfrac{\d}{\d t} \| \p_x^k \rho \|_{L^2_{x,z}}^2 + \| \p_x^{k} \nabla_x \rho \|_{L_{x,z}^2(D)}^2 = \underbrace{ - \tfrac{1}{\eps} \l \p_x^k ( \p_z ( g \rho ) ), \p_x^k \rho \r_{L_{x,z}^2}}_{II_1} + \underbrace{ \gamma \l \p_x^k (n \rho), \p_x^k \rho \r_{L_{x,z}^2}}_{II_2} \,.
\end{align}
We split the term $II_1$ as follows:
\begin{align}\label{II1=II11+II12}
  \no II_1 = & \tfrac{1}{\eps} \l \p_x^k (g \rho), \p_z (\p_x^k \rho) \r_{L_{x,z}^2} = \tfrac{1}{\eps} \l g \p_x^k \rho, \p_z (\p_x^k \rho) \r_{L_{x,z}^2} + \tfrac{1}{\eps} \sum_{ \substack{ a+b=k \\ |a| \geq 1}} \l \p_x^a g \p_x^b \rho, \p_z (\p_x^k \rho) \r_{L_{x,z}^2} \\
  = & \underbrace{ \tfrac{1}{\eps} \l g \p_x^k \rho, \p_z (\p_x^k \rho) \r_{L_{x,z}^2}}_{II_{11}} \ \underbrace{ -\tfrac{1}{\eps} \sum_{ \substack{ a+b=k \\ |a| \geq 1 } } \l \p_x^a g \cdot \p_z \p_x^b \rho, \p_x^k \rho \r_{L_{x,z}^2} }_{II_{12}} \,.
\end{align}
Here the property of $g(z,h(t,z))$ in $\eqref{g-derivative}$ is used. Furthermore, the property of $g(z,h(t,z))$ in $\eqref{g-derivative}$ infers that
\begin{align}\label{II11}
  II_{11} = & - \tfrac{1}{2\eps} \l \p_z g, ( \p_x^k \rho )^2 \r_{L_{x,z}^2} = \tfrac{k_V}{2\eps} \| \p_x^k \rho \|_{L_{x,z}^2}^2 \lesssim \tfrac{1}{\eps} \mathscr{E}_s \,.
\end{align}
By Lemma \ref{Lmm-g-norms}, the H\"older inequality, the Sobolev embedding $H_x^1(\mathbb{T}^3)\hookrightarrow L_x^4(\mathbb{T}^3)$ and $H_x^2(\mathbb{T}^3)\hookrightarrow L_x^\infty(\mathbb{T}^3)$, the term $II_{12}$ can be bounded by
\begin{align}\label{II12}
   II_{12} \lesssim \tfrac{1}{\eps} ( \| h \|_{H_x^{s}} + \| h \|_{H_x^s}^s ) \| \rho \|_{H_{x,z}^s} \| \nabla_x \rho \|_{H_x^{s-1} L^2_z} \lesssim \tfrac{1}{\eps} (\mathscr{E}_s^\frac{1}{2} + \mathscr{E}_s^\frac{s}{2}) \mathscr{E}_s \,.
\end{align}
We emphasize that there is a mixed $(x,z)$-derivative term uncontrolled in the quantity $II_{12}$, which indicates us that the estimates of the mixed $(x,z)$-derivatives are required to close the energy bounds. Furthermore, combining with the Sobolev embedding $H_x^1(\mathbb{T}^3)\hookrightarrow L_x^4(\mathbb{T}^3)$ and $H_x^2(\mathbb{T}^3)\hookrightarrow L_x^\infty(\mathbb{T}^3)$, the term $II_2$ can be bounded by
\begin{align}\label{II2}
  \no II_2 = & \gamma \l \p_x^k n \cdot \rho, \p_x^k \rho \r_{L_{x,z}^2} + \gamma \l n \cdot \p_x^k \rho, \p_x^k \rho \r_{L_{x,z}^2} + \gamma \sum_{ \substack{ a+b=k \\ |a| \geq 1, |b| \geq 1 }} \l \p_x^a n \cdot \p_x^b \rho, \p_x^k \rho \r_{L_{x,z}^2} \\
  \lesssim & ( \| n \|_{H^s_{x}} + \| \rho \|_{H^s_x L^2_z} ) ( \| \nabla_x n \|_{H^{s-1}_{x}}^2 + \| \nabla_x \rho \|_{H^{s-1}_{x}L^2_z}^2 ) \lesssim (\mathscr{E}_s^\frac{1}{2} + \mathscr{E}_s^\frac{s}{2}) \mathscr{E}_s \,.
\end{align}
Consequently, collecting the above bounds, summing up for $1 \leq |k| \leq s$ and combining with the $L^2$ estimate \eqref{equation 2.1} reduce to
\begin{align}\label{equation 2.13}
  \tfrac{1}{2} \tfrac{\d}{\d t} \| \rho \|_{H^s_x L^2_z}^2 + \| \nabla_x \rho \|_{H^s_{x}L^2_z (D)}^2 \lesssim \tfrac{1}{\eps} ( 1 + \mathscr{E}_s^\frac{1}{2} + \mathscr{E}_s^\frac{s}{2} ) \mathscr{E}_s + \mathscr{E}_s \,.
\end{align}

We next act $k$-order derivative operator $\p^k_x$ on the equation \eqref{Eq-mass} for all $1 \leq |k| \leq s$, take $L^2$-inner product by dot with $\p_x^k{\vr}$ and integrate by parts over $x\in \mathbb{T}^3$ . We thereby have
\begin{align}\label{H-varrho}
  \no &\tfrac{1}{2} \tfrac{\d}{\d t} \| \p_x^k \vr \|_{L^2_{x}}^2 + d \| \p_x^k \nabla_x \vr \|_{L^2_{x}}^2 \\
  \no &= - \l \p_x^k \nabla_x \int_{\T_w} (D(z)-d) \rho(t,x,z) \d z, \p_x^k \nabla_x \vr \r_{L_{x}^2} + \gamma \l \p_x^k ( n \vr ), \p_x^k \vr \r_{L_{x}^2} \\
  & \leq \| \p_x^k \nabla_x \vr \|_{L^2_{x}} \underbrace{ \| \p_x^k \nabla_x \int_{\T_w} (D(z)-d) \rho(t,x,z) \d z \|_{L^2_{x}}}_{II_3} + \underbrace{ \gamma \l \p_x^k ( n \vr ), \p_x^k \vr \r_{L_{x}^2}}_{II_4} \,.
\end{align}
Here the splitting $D (z) = d + \big(D (z) - d\big)$ is also used. By the similar arguments in estimating the term $I_3$, the quantity $II_3$ can be bounded by
\begin{align}\label{II3-1}
  II_3 \leq \tfrac{\| D(z)-d\|_{L^2_{z}}}{\sqrt{d}} \| \p_x^k \nabla_x \rho \|_{L^2_{x,z}(D)} \,.
\end{align}
For the term $II_4$, combining with the Sobolev embedding $H_x^1(\mathbb{T}^3)\hookrightarrow L_x^4(\mathbb{T}^3)$ and $H_x^2(\mathbb{T}^3)\hookrightarrow L_x^\infty(\mathbb{T}^3)$, we have
\begin{align}\label{II4}
  \no II_4 = & \gamma \l \p_x^k n \cdot \vr, \p_x^k \vr \r_{L_x^2} + \gamma \l n \cdot \p_x^k \vr, \p_x^k \vr \r_{L_x^2} + \gamma \sum_{ \substack{ a+b=k \\ |a| \geq 1, |b| \geq 1 }} \l \p_x^a n \cdot \p_x^b \vr, \p_x^k \vr \r_{L_x^2} \\
  \lesssim & ( \| n \|_{H^s_{x}} + \| \vr \|_{H^{s}_x} ) \big( \| \nabla_x n \|^2_{H^{s-1}_x} + \| \nabla_x \vr \|^2_{H^{s-1}_x} \big) \lesssim \mathscr{E}_s^\frac{3}{2} \,.
\end{align}
Due to the Cauchy inequality, we have
\begin{align}\label{II3-2}
  \tfrac{\| D(z)-d\|_{L^2_{z}}}{\sqrt{d}} \| \p_x^k \nabla_x \rho \|_{L^2_{x,z}(D)} \| \p_x^k \nabla_x \vr \|_{L^2_{x}} \leq \tfrac{d}{2} \| \p_x^k \nabla_x \vr \|_{L^2_{x}}^2 + \tfrac{\eta + 1}{2} \| \p_x^k \nabla_x \rho \|_{L^2_{x,z}(D)}^2
\end{align}
for $\eta = \tfrac{\| D (z) - d \|^2_{L^2_z}}{d^2} \geq 0$. Collecting the above inequalities and combining the $L^2$ estimate \eqref{equation 2.2}, we therefore have
\begin{align}\label{equation 2.14}
  \tfrac{1}{2(\eta + 1)} \tfrac{\d}{\d t} \| \vr \|_{H^s_{x}}^2 + \tfrac{ d}{2(\eta + 1)} \| \nabla_x \vr \|_{H^s_{x}}^2 - \tfrac{1}{2} \| \nabla_x \rho \|_{H^s_{x} L^2_z(D)}^2 \lesssim \mathscr{E}_s^\frac{3}{2} \,.
\end{align}

We next act $k$-order derivative on the second equation of system \eqref{EECPK} for all $1 \leq |k|\leq s$, take $L^2$-inner product by dot with $\p_x^k h$ and integrate by parts over $x\in \mathbb{T}^3$. We thereby have,
\begin{align}\label{H-h}
  \tfrac{1}{2} \tfrac{\d}{\d t} \| \p_x^k h \|_{L^2_{x}}^2 + D_h \| \p_x^k \nabla_x h \|_{L_{x}^2}^2 + \beta \| \p_x^{k} h \|_{L_{x}^2}^2 = \alpha \l \p_x^k \vr, \p_x^k h \r_{L_{x}^2} \leq \tfrac{\beta}{2} \| \p^k_x h \|^2_{L^2_x} + \tfrac{\alpha^2}{2 \beta} \| \p^k_x \vr \|^2_{L^2_x} \,,
\end{align}
which, together with the $L^2$ estimate \eqref{equation 2.3}, implies that
\begin{align}\label{equation 2.15}
  \tfrac{1}{2} \tfrac{\d}{\d t} \| h \|_{H^s_{x}}^2 + D_h \| \nabla_x h \|_{H_{x}^s}^2 + \tfrac{\beta}{2} \| h \|_{H_{x}^s}^2 \leq \tfrac{\alpha^2}{2\beta} \| \nabla_x \vr \|_{H^{s-1}_{x}}^2 \lesssim \mathscr{E}_s \,.
\end{align}

Next, from acting $\p_x^k$ on the third equation of system \eqref{EECPK} for all $1 \leq |k|\leq s $, taking $L^2$-inner product by dot with $\p_x^k n$ and integrating by parts over $x\in \mathbb{T}^3$, we deduce that
\begin{align}\label{H-n}
  \no \tfrac{1}{2} & \tfrac{\d}{\d t} \| \p_x^k n \|_{L^2_{x}}^2 + D_n \| \p_x^k \nabla_x n \|_{L^2_{x}}^2 = -\xi \l \p_x^k ( \vr n), \p_x^k n \r_{L_x^2} \\
  \no = & \xi \l \p_x^k n \cdot \vr, \p_x^k n \r_{L_x^2} + \xi \l n \cdot \p_x^k \vr, \p_x^k n \r_{L_x^2} + \xi \sum_{ \substack{ a+b=k \\ |a| \geq 1, |b| \geq 1 }} \l \p_x^a n \cdot \p_x^b \vr, \p_x^k n \r_{L_x^2} \\
  \lesssim & ( \| n \|_{H^s_{x}} + \| \vr \|_{H^{s}_x} ) \big( \| \nabla_x n \|^2_{H^{s-1}_x} + \| \nabla_x \vr \|^2_{H^{s-1}_x} \big) \lesssim \mathscr{E}_s^\frac{3}{2} \,.
\end{align}
Here the Sobolev embedding $H_x^1(\mathbb{T}^3)\hookrightarrow L_x^4(\mathbb{T}^3)$ and $H_x^2(\mathbb{T}^3)\hookrightarrow L_x^\infty(\mathbb{T}^3)$ are also utilized. Together with the $L^2$ estimate \eqref{equation 2.4}, we infer that
\begin{align}\label{equation 2.16}
  \tfrac{1}{2} \tfrac{\d}{\d t} \| n \|_{H^s_{x}}^2 + D_n \| \nabla_x n \|_{H^s_{x}}^2 \lesssim \mathscr{E}_s^\frac{3}{2}  \,.
\end{align}
It is therefore derived from summing the inequalities \eqref{equation 2.13}, \eqref{equation 2.14}, \eqref{equation 2.15} and \eqref{equation 2.16} that
\begin{equation}\label{Spatial-Est}
  \begin{aligned}
    & \tfrac{1}{2} \tfrac{\d}{\d t} \big( \| \rho \|^2_{H^s_x L^2_z} + \tfrac{1}{\eta + 1} \| \vr \|^2_{H^s_x} + \| h \|^2_{H^s_x} + \| n \|^2_{H^s_x} \big) \\
    & + \tfrac{1}{2} \| \nabla_x \rho \|^2_{H^s_x L^2_z (D)} + \tfrac{d}{2 (\eta + 1)} \| \nabla_x \vr \|^2_{H^s_x} + D_h \| \nabla_x h \|^2_{H^s_x} + \tfrac{\beta}{2} \| h \|^2_{H^s_x} + D_n \| \nabla_x n \|^2_{H^s_x} \\
    & \lesssim \tfrac{1}{\eps} ( 1 + \mathscr{E}_s^\frac{1}{2} + \mathscr{E}_s^\frac{s}{2} ) \mathscr{E}_s + \mathscr{E}_s + \mathscr{E}_s^\frac{3}{2} \,.
  \end{aligned}
\end{equation}

\vspace*{2mm}

{\em (2) The mixed $(x,z)$-derivative estimates.}

\vspace*{2mm}

One notices that there is a term $\tfrac{1}{\eps} ( \| h \|_{H_x^s} + \| h \|_{H_x^s}^s ) \| \rho \|_{H_{x,z}^s}^2$ including the mixed $(x,z)$-derivatives of $\rho (t,x,z)$ in \eqref{II12}, which cannot be controlled by the energy and dissipative rate in the left hand side of \eqref{Spatial-Est}. In other words, the energy inequality \eqref{Spatial-Est} is not closed. The mixed $(x,z)$-derivative estimates are therefore required. We act the mixed derivative operator $\p_z^l \p_x^m$ on the first equation of system \eqref{EECPK} for all $(m, l) \in \mathbb{N}^3 \times \mathbb{N}$ with $l+|m|\leq s $ and $l \neq 0$, take $L^2$-inner product by dot with $\p_z^l\p_x^m{\rho}$ and integrate by parts over $x\in \mathbb{T}^3$ and $z \in \T_w$. We thereby have
\begin{align}\label{M-rho}
  \tfrac{1}{2} \tfrac{\d}{\d t} \| \p_z^l \p_x^m \rho \|_{L^2_{x,z}}^2 = & \underbrace{ \l \p_z^l \p_x^m \Delta_x (D(z)\rho), \p_z^l \p_x^m \rho \r_{L_{x,z}^2}}_{III_1} \underbrace{ - \tfrac{1}{\eps} \l \p_z^{l+1} \p_x^m (g \rho), \p_z^l \p_x^m \rho \r_{L_{x,z}^2}}_{III_2} \no\\
  & + \underbrace{ \gamma \l \p_z^l \p_x^m (n \rho), \p_z^l \p_x^m \rho \r_{L_{x,z}^2}}_{III_3}  \,.
\end{align}

Considering the hypotheses of $D(z)$ in \eqref{equation 1.5} and \eqref{equation 1.6}, the term $III_1$ can be estimated as
\begin{equation}\label{III1}
\begin{aligned}
  III_1 & = - \l D(z) \p_z^l \p_x^m \nabla_x \rho, \p_z^l \p_x^m \nabla_x \rho \r_{L_{x,z}^2} - \sum_{ \substack{ a+b=l \\ |a| \geq 1 }} \l \p_z^a ( D(z) ) \p_z^b ( \p_x^m \nabla_x \rho ), \p_z^l \p_x^m \nabla_x \rho \r_{L_{x,z}^2} \\
  & \leq - \| \nabla_x \p_z^l \p_x^m \rho \|_{L_{x,z}^2(D)}^2 + \tfrac{1}{\sqrt{d}} \sum_{ \substack{ a+b=l \\ |a| \geq 1 }} \| \p_z^a (D(z)) \|_{L^{\infty}_z} \| \p_z^b ( \p_x^m \nabla_x \rho ) \|_{L^{2}_{x,z}} \| \p_z^l \p_x^m \nabla_x \rho \|_{L^{2}_{x,z}(D)} \\
  & \leq - \tfrac{1}{2} \| \nabla_x \p_z^l \p_x^m \rho \|_{L_{x,z}^2(D)}^2 + \tfrac{b^2}{2d} \| \nabla_x \rho \|_{H_{x,z}^{s-1}}^2 \leq - \tfrac{1}{2} \| \nabla_x \p_z^l \p_x^m \rho \|_{L_{x,z}^2(D)}^2 + C \mathscr{E}_s \,.
\end{aligned}
\end{equation}

In order to dominate the term $III_2$, we split it as three parts:
\begin{align*}
  III_2 & = \underbrace{ \tfrac{1}{\eps} \l \p_z^{l} \p_x^m \rho \cdot g, \p_z ( \p_z^l \p_x^m \rho ) \r_{L_{x,z}^2}}_{III_{21}} + \underbrace{ \tfrac{1}{\eps} \l \p_z^{l-1} \p_x^m \rho \cdot \p_z g, \p_z ( \p_z^l \p_x^{m} \rho ) \r_{L_{x,z}^2}}_{III_{22}} \\
  & \quad + \underbrace{ \tfrac{1}{\eps} \sum_{ \substack{ a+b=m \\ |b| \geq 1 }} \l \p_z^l ( \p_x^a \rho \cdot \p_x^b g ), \p_z ( \p_z^l \p_x^{m} \rho ) \r_{L_{x,z}^2}}_{III_{23}} \,.
\end{align*}
Considering the hypothesis of $g(z,h(t,x))$ in \eqref{g-derivative}, the term $III_{21}$ can be bounded by
\begin{align*}
   III_{21} & = \tfrac{1}{2\eps} \l g, \p_z ( \p_z^l \p_x^m \rho )^2 \r_{L_{x,z}^2} = - \tfrac{1}{2\eps} \l \p_z g, ( \p_z^l \p_x^m \rho)^2 \r_{L_{x,z}^2} = \tfrac{k_V}{2\eps} \| \p_z^l \p_x^m \rho \|_{L^2_{x,z}}^2 \lesssim \tfrac{1}{\eps} \mathscr{E}_s \,.
\end{align*}
Similarly, the term $III_{22}$ can be bounded by
\begin{align*}
  III_{22} & = - \tfrac{1}{\eps} \l \p_z^l \p_x^m \rho \cdot \p_z g, \p_z^l \p_x^{m} \rho \r_{L_{x,z}^2} = \tfrac{k_V}{\eps} \| \p_z^l \p_x^m \rho \|_{L^2_{x,z}}^2 \lesssim \tfrac{1}{\eps} \mathscr{E}_s \,.
\end{align*}
Now we estimate the term $III_{23}$. From the property of $g(z, \omega)$ in \eqref{g-derivative} in Lemma \ref{Lmm-g-norms}, we see that
\begin{align*}
  III_{23} & = \underbrace{ - \tfrac{1}{\eps} \sum_{ \substack{ a+b=m \\ |b| = 1 }} \l \p_z^{l+1} \p_x^a \rho \cdot \p_x^b g, \p_z^l \p_x^{m} \rho \r_{L_{x,z}^2}}_{III_{231}} \ \underbrace{ - \tfrac{1}{\eps} \sum_{ \substack{ a+b=m \\ |b| \geq 2 }} \l \p_z^{l+1} \p_x^a \rho \cdot \p_x^b g, \p_z^l \p_x^{m} \rho \r_{L_{x,z}^2}}_{III_{232}} \,.
\end{align*}
Lemma \ref{Lmm-g-norms} and the H\"older inequality tell us that
\begin{equation*}
  \begin{aligned}
    III_{231} \lesssim \tfrac{1}{\eps} \big( \| h \|_{H^s_x} + \| h \|^s_{H^s_x} \big) \| \rho \|_{H^s_{x,z}} \| \nabla_x \rho \|_{H^{s-1}_{x,z}} \lesssim \tfrac{1}{\eps} (\mathscr{E}_s^\frac{1}{2} + \mathscr{E}_s^\frac{s}{2}) \mathscr{E}_s \,,
  \end{aligned}
\end{equation*}
where $s \geq 3$ is required. For the term $III_{232}$, one easily derives from $l + |m| \leq s$, $l \geq 1$, $a+b=m$ and $|b| \geq 2$ that $2 \leq |b| \leq s - 2$, $l + 1 + |a| \leq s - 1$. It is therefore derived from Lemma \ref{Lmm-g-norms}, the H\"older inequality and the Sobolev embedding  $H_x^1(\mathbb{T}^3)\hookrightarrow L_x^4(\mathbb{T}^3)$ that
\begin{equation*}
  \begin{aligned}
    III_{232} \lesssim \tfrac{1}{\eps} \big( \| h \|_{H^s_x} + \| h \|^s_{H^s_x} \big) \| \rho \|_{H^s_{x,z}} \| \nabla_x \rho \|_{H^{s-1}_{x,z}} \lesssim \tfrac{1}{\eps} (\mathscr{E}_s^\frac{1}{2} + \mathscr{E}_s^\frac{s}{2}) \mathscr{E}_s \,.
  \end{aligned}
\end{equation*}
We thereby have
\begin{equation*}
  \begin{aligned}
    III_{23} = III_{231} + III_{232} \lesssim \tfrac{1}{\eps} (\mathscr{E}_s^\frac{1}{2} + \mathscr{E}_s^\frac{s}{2}) \mathscr{E}_s \,.
  \end{aligned}
\end{equation*}
Consequently, one obtains
\begin{equation}\label{III2}
  \begin{aligned}
    III_2 \lesssim \tfrac{1}{\eps} (\mathscr{E}_s^\frac{1}{2} + \mathscr{E}_s^\frac{s}{2}) \mathscr{E}_s \,.
  \end{aligned}
\end{equation}

It remains to control the term $III_3$. The Sobolev embedding $H_x^1(\mathbb{T}^3)\hookrightarrow L_x^4(\mathbb{T}^3)$ and $H_x^2(\mathbb{T}^3)\hookrightarrow L_x^\infty(\mathbb{T}^3)$ reduce to
\begin{align}\label{III3}
   III_{3} \lesssim \| n \|_{H^s_{x}} \| \rho \|_{H^{s}_{x,z}}^2 \lesssim \mathscr{E}_s^\frac{3}{2} \,.
\end{align}

We therefore infer from the bounds of $III_1$, $III_2$ and $III_3$ that
\begin{align}\label{Mixed-Est}
  \tfrac{1}{2} \tfrac{\d}{\d t} \| \p_z^l \p_x^{m} \rho \|_{L^{2}_{x,z}}^2 + \tfrac{1}{2} \| \nabla_x \p^l_z \p^m_x \rho \|^2_{L^2_{x,z} (D)} \lesssim \tfrac{1}{\eps} (1 + \mathscr{E}_s^\frac{1}{2} + \mathscr{E}_s^\frac{s}{2}) \mathscr{E}_s + \mathscr{E}_s + \mathscr{E}_s^\frac{3}{2}
\end{align}
for all $ l + |m| \leq s $ with $l \neq 0$. Finally, together with the inequalities $\eqref{Spatial-Est}$ and $\eqref{Mixed-Est}$, we have
\begin{equation}\label{Closed-Est}
\begin{aligned}
  & \tfrac{1}{2} \tfrac{\d}{\d t} \big( \| \rho \|^2_{H^s_{x,z}} + \tfrac{1}{\eta + 1} \| \vr \|^2_{H^s_x} + \| h \|^2_{H^s_x} + \| n \|^2_{H^s_x} \big) \\
  & +  \| \nabla_x \rho \|^2_{H^s_{x,z} (D)} + \tfrac{d}{2 (\eta + 1)} \| \nabla_x \vr \|^2_{H^s_x} + D_h \| \nabla_x h \|^2_{H^s_x} + \tfrac{\beta}{2} \| h \|^2_{H^s_x} + D_n \| \nabla_x n \|^2_{H^s_x} \\
  & \lesssim \tfrac{1}{\eps} (1 + \mathscr{E}_s^\frac{1}{2} + \mathscr{E}_s^\frac{s}{2}) \mathscr{E}_s + \mathscr{E}_s + \mathscr{E}_s^\frac{3}{2}  \,.
\end{aligned}
\end{equation}
Recalling the definitions of the energy functionals $E_L (t)$ and $D_L (t)$ in \eqref{Loc-Energ}, we finish the proof of Lemma \ref{lemma 2.1} from the inequality \eqref{Closed-Est}.
\end{proof}

\subsection{Local well-posedness to the \eqref{EECPK} system with large initial data}\label{Sec: Local-Result to the EECP model}

In this subsection, we will prove the local well-posedness of the system \eqref{EECPK} with large initial data \eqref{IC-EECPK}, namely, prove the first part of Theorem \ref{Thm1}. We first construct the approximate system by iterative scheme as follows: for all integer $l \geq 0$,
\begin{align}\label{equation 3.1}
  \left\{
    \begin{array}{c}
      \p _t \rho^{l+1} = D(z) \Delta_x \rho^{l+1} - \tfrac{1}{\eps} \p_z ( g(z, h^l) \rho^{l+1} ) + \gamma n^l \rho^{l+1}   \,, \\[2mm]
      \p _t h^{l+1} = D_h \Delta_x h^{l+1} + \alpha {\vr}^{l+1} - \beta h^{l+1} \,, \\[2mm]
      \p _t n^{l+1} = D_n \Delta_x n^{l+1} - \xi {\vr}^l n^{l+1}  \,, \\[2mm]
      (\rho^{l+1}, h^{l+1}, n^{l+1})|_{t=0}=(\rho^{\eps, in}, h^{\eps, in},  n^{\eps, in}) \,.
    \end{array}
  \right.
\end{align}
The iteration starts from
\begin{equation}\label{equation 3.2}
  ( \rho^{0} (t,x,z), h^{0} (t,x), n^{0} (t,x) ) = ( \rho^{\eps, in} (x,z), h^{\eps, in} (x), n^{\eps, in} (x) ) \,.
\end{equation}

In the arguments proving the convergence ($l \rightarrow \infty$) of the approximate solutions \eqref{equation 3.1}, it is essential to obtain uniform (in $l \geq 0$) energy estimates of \eqref{equation 3.1} in a uniform lower bound lifespan time, whose derivations are the almost same as the derivations of the a priori estimates for the system \eqref{EECPK} with initial data \eqref{IC-EECPK}. The arguments of the uniform lower bound lifespan time can be referred to \cite{JL-SIAM-2019}, for instance. The convergence arguments are a standard process. For simplicity, we will only consider the a priori estimates in Lemma \ref{lemma 2.1} for the smooth solutions of \eqref{EECPK}-\eqref{IC-EECPK} on some time interval.

From Lemma \ref{lemma 2.1}, we see that
\begin{equation}\label{Loc-1}
  \begin{aligned}
    \tfrac{\d}{\d t} E_{L} (t) + D_{L} (t) \leq C (1 + \tfrac{1}{\eps} ) ( 1 + E_L^{\frac{s}{2}} (t) ) E_{L} (t) \,,
  \end{aligned}
\end{equation}
where the energy functional $E_L (t)$ and $D_L (t)$ are defined in \eqref{Loc-Energ}. Then \eqref{Loc-1} implies
\begin{equation*}
  \begin{aligned}
    \frac{\tfrac{\d}{\d t} E_L (t) }{( 1 + E_L^\frac{s}{2} (t) ) E_L (t)} \leq C (1 + \tfrac{1}{\eps}) \,,
  \end{aligned}
\end{equation*}
which means that
\begin{equation}\label{Loc-2}
  \begin{aligned}
    \frac{\d}{\d t} \Big[ \ln \frac{E_L (t)}{(1 + E_L^\frac{s}{2} (t) )^\frac{2}{s}} \Big] \leq C (1 + \tfrac{1}{\eps}) \,.
  \end{aligned}
\end{equation}
Noticing that
\begin{equation*}
  \begin{aligned}
    E_L (0) = E_L^{\eps, in} : = \| \rho^{\eps, in} \|^2_{H^s_{x,z}} + \tfrac{1}{2 (\eta + 1)} \| \int_{ \T_w } \rho^{\eps, in} (\cdot, z) \d z \|^2_{H^s_x} + \| h^{\eps, in} \|^2_{H^s_x} + \| n^{\eps, in} \|^2_{H^s_x} < \infty \,,
  \end{aligned}
\end{equation*}
we derive from integrating the inequality \eqref{Loc-2} over $[0,t]$ that
\begin{equation}\label{Loc-3}
  \begin{aligned}
    \frac{E_L (t)}{ \big[ 1 + E_L^\frac{s}{2} (t) \big]^\frac{2}{s} } \leq \frac{E_L^{\eps, in} }{ \big[ 1 + (E_L^{\eps, in})^\frac{s}{2} \big]^\frac{2}{s} } e^{C (1 + \tfrac{1}{\eps}) t} : = A(t) \,.
  \end{aligned}
\end{equation}
Consider the function
$$H (y) = \frac{y}{(1 + y^\frac{s}{2})^\frac{2}{s}} $$
for $y \geq 0$. It is easy to see that $H (0) = 0$, $\lim\limits_{y \rightarrow \infty} H (y) = 1$, and
\begin{equation*}
  \begin{aligned}
    H' (y) = \frac{1}{(1 + y^\frac{s}{2})^{\frac{2}{s}+1}} > 0 \,, \ y \geq 0
  \end{aligned}
\end{equation*}
is strictly decreasing in $[0,\infty)$. Thus the behavior of $H(y)$ reads as follows (Figure \ref{Fig-H}).
\begin{figure}[h]
	\centering
	\begin{tikzpicture}[node distance=2cm]
	\draw[->,thick](-1,0)--(4,0) node[right,scale=1]{$y$};
	\draw[->,thick](0,-1)--(0,4) ;
	\draw[-] node[below left]{0} (2.3,0)--(4,0) ;
	\draw[scale=1, very thick, domain=0:3.9, smooth,variable=\t]
	plot (\t,{-3/(( 3 * \t  + 1 ) ) + 3 }) node[right] {$H(y)$};
	\draw[-,thick,dashed] (4,3)--(-1,3) (0,3) node[above left, scale=1]{1};
	\draw[-,thick] (4,2)--(-1,2) (0,2) node[above left, scale=1]{$A(t)$};
	\draw[-,thick,dashed] (2/3,2)--(2/3,0) (0.2,0) node[below right, scale=1]{$H^{-1} (A(t))$};
	\end{tikzpicture}
	\caption{Behavior of $H (y)$.}\label{Fig-H}
\end{figure}
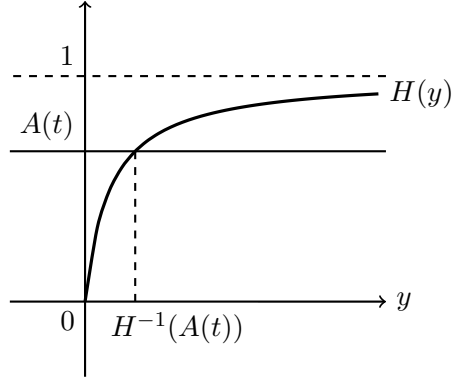
We therefore see that if $A(t) < 1$, the nonlinear inequality \eqref{Loc-3} on the functional $E_L (t)$ can be solved as
\begin{equation}\label{Loc-4}
  \begin{aligned}
    E_L (t) \leq H^{-1} (A(t)) = H^{-1} \left( \tfrac{E_L^{\eps, in} }{ \big[ 1 + (E_L^{\eps, in})^\frac{s}{2} \big]^\frac{2}{s} } e^{C (1 + \tfrac{1}{\eps}) t} \right) : = B_\eps (t) \,.
  \end{aligned}
\end{equation}
Notice that $B_\eps (t)$ is strictly increasing on $t \geq 0$ and $A(t) < 1$ implies that
$$t < T_0 = \tfrac{1}{C(1 +\eps^{-1})} \ln \tfrac{[1 + (E_L^{\eps, in})^\frac{s}{2}]^\frac{2}{s}}{E_L^{\eps, in}} \,. $$
Consequently, from \eqref{Loc-1} and \eqref{Loc-4}, we derive that for any $0 < T < T_0 $,
\begin{equation*}
  \begin{aligned}
    E_L (t) + \int_0^t D_L (\tau) \d \tau \leq E_L^{\eps, in} + CT (1 + \eps^{-1}) B_\eps (T) \big[ 1 + B_\eps^\frac{s}{2} (T) \big] : = \widetilde{B}_\eps (T, E_L^{\eps, in}, C)
  \end{aligned}
\end{equation*}
holds for all $t \in [0,T]$. Consequently, we conclude the local existence of the system \eqref{EECPK} with large initial data \eqref{IC-EECPK}, namely, the first part of Theorem \ref{Thm1}.

\subsection{\bf Positivity and conservation laws}
\label{Subsec-2-3}

In this subsection, the first goal is to prove the positivity of the solution $(\rho, h, n)$ to \eqref{EECPK} system constructed in the first part of Theorem \ref{Thm1}. The approach is inspired by Perthame's work \cite{Perthame-2015-BOOK}. Based on the positivity of $\rho (t,x,z)$ and $n (t,x)$, the second goal is to derive their conservation laws  under the initial assumption \eqref{IC-Average}. Remark that the \eqref{EECPK} system admits a unconditionally conservation law
\begin{equation*}
	\begin{aligned}
		\frac{\d}{\d t} \left( \iint_{ \T^3 \times \T_w } \rho (t,x,z) \d z \d x + \frac{\gamma}{\xi} \int_{ \T^3 } n (t,x) \d x \right) = 0 \,.
	\end{aligned}
\end{equation*}
However, the previous conservation law can not be separated without any additional conditions. Thus the initial assumption \eqref{IC-Average} is required to separately obtain the conservation laws of $\rho (t,x,z)$ and $n (t,x)$.

\begin{proof}[\bf Proof of part (2) in Theoerm \ref{Thm1}] We split it into two steps.

	\vspace*{3mm}
	{\em Step 1. Positivity.}
	\vspace*{3mm}
	
	We first verify the positivity of the \eqref{EECPK} system. More precisely, for the solution $(\rho, h, n)$ constructed in part (1) of Theorem \ref{Thm1}, if the initial data are further assumed $\rho^{\eps, in}, h^{\eps, in}, n^{\eps, in} \geq 0$, then there hold $\rho, h, n \geq 0$. For $h (t,x)$ and $n (t,x)$, the following lemma constructed by Perthame's book \cite{Perthame-2015-BOOK} can be directly employed:
	
	\begin{lemma}[Lemma 3.10 of \cite{Perthame-2015-BOOK}]\label{Lmm-Perthame}
		Let $u \in C(\R^+; L^2 (\R^d))$ be a weak solution to the parabolic equation
		\begin{equation*}
			\left\{
			  \begin{aligned}
			  	& \partial_t u - \Delta_x u + R(t,x) u = Q(t,x) \quad \textrm{ in } \R^d \,, \\
			  	& u (0,x) = u^{in} (x) \,.
			  \end{aligned}
			\right.
		\end{equation*}
	Assume $|R(t,x)| \leq \Gamma (t)$ with $\Gamma \in L^\infty_{\mathrm{loc}} (\R^+)$ and $Q \in C(\R^+; L^2 (\R^d))$. If $Q \geq 0$ and $u^{in} \geq 0$, then $u \geq 0$.
	\end{lemma}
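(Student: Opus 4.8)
The plan is to run a negative-part energy estimate in the spirit of Stampacchia's truncation method, closed by Grönwall's inequality. Write $u = u_+ - u_-$ with $u_- := \max\{-u,0\} \geq 0$, so that proving $u \geq 0$ is equivalent to showing $u_- \equiv 0$; since $u^{in} \geq 0$, the initial negative part vanishes, $u_-(0,\cdot) = 0$. First I would test the equation against $-u_-$. Multiplying $\p_t u - \Delta_x u + R u = Q$ by $-u_-$, integrating over $\R^d$, and using the pointwise identities $(-u_-)\p_t u = \tfrac12 \p_t (u_-^2)$, $\nabla u \cdot \nabla u_- = -|\nabla u_-|^2$ (valid a.e.\ since $\nabla u_+$ and $\nabla u_-$ have disjoint supports), and $u\cdot(-u_-) = u_-^2$, one arrives at
\begin{equation*}
  \tfrac{1}{2} \tfrac{\d}{\d t} \| u_- \|_{L^2(\R^d)}^2 + \| \nabla u_- \|_{L^2(\R^d)}^2 + \int_{\R^d} R\, u_-^2 \,\d x = - \int_{\R^d} Q\, u_- \,\d x \,.
\end{equation*}

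Because $Q \geq 0$ and $u_- \geq 0$, the right-hand side is $\leq 0$; discarding the nonnegative dissipation $\| \nabla u_- \|^2$ and bounding $-\int_{\R^d} R\, u_-^2 \,\d x \leq \int_{\R^d} |R|\, u_-^2 \,\d x \leq \Gamma(t) \| u_- \|_{L^2(\R^d)}^2$ via the hypothesis $|R| \leq \Gamma$ yields the differential inequality
\begin{equation*}
  \tfrac{\d}{\d t} \| u_-(t) \|_{L^2(\R^d)}^2 \leq 2\,\Gamma(t)\, \| u_-(t) \|_{L^2(\R^d)}^2 \,.
\end{equation*}
Then Grönwall's inequality together with $\Gamma \in L^\infty_{\mathrm{loc}}(\R^+)$ gives $\| u_-(t) \|_{L^2(\R^d)}^2 \leq \| u_-(0) \|_{L^2(\R^d)}^2 \exp\big(2\int_0^t \Gamma(s)\,\d s\big) = 0$, hence $u_- \equiv 0$ and $u \geq 0$. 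Applied (with the obvious constant-diffusion variant) to the second and third equations of \eqref{EECPK}, this immediately delivers the positivity of $h$ and $n$, once $\vr \geq 0$ is in hand.

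The main obstacle is making this formal computation rigorous for a merely weak solution: since $u \mapsto u_-$ is only Lipschitz and not $C^1$, both the chain rule and the identity $\skpt{\p_t u}{-u_-} = \tfrac12 \tfrac{\d}{\d t} \| u_- \|_{L^2(\R^d)}^2$ require justification. I would handle this by the standard convex regularization: approximate the convex function $j(s) := \tfrac12 (s_-)^2$ by smooth convex functions $j_\delta$ with $j_\delta'$ bounded and $j_\delta \to j$, $j_\delta' \to -(\cdot)_-$ as $\delta \to 0$, test the equation against $j_\delta'(u)$, use $\skpt{\p_t u}{j_\delta'(u)} = \tfrac{\d}{\d t} \int_{\R^d} j_\delta(u)\,\d x$, and pass to the limit $\delta \to 0$ by dominated convergence. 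This step relies on the weak solution enjoying the parabolic regularity $u \in L^2_{\mathrm{loc}}(\R^+; H^1(\R^d))$ with $\p_t u \in L^2_{\mathrm{loc}}(\R^+; H^{-1}(\R^d))$, which is part of the weak-solution framework and legitimizes both the duality pairing and the (boundary-term-free) integration by parts on $\R^d$.
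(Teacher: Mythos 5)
Your proof is correct, but it is worth noting that the paper itself never proves this lemma: it is quoted verbatim from Perthame's book and used as a black box, so the only in-paper argument to compare with is the adaptation in Subsection \ref{Subsec-2-3}, where the same positivity idea is carried out by hand for the (non-parabolic) $\rho$-equation. There the authors regularize in a different place than you do: they define the negativity set $\Omega_- = \{\rho < 0\}$, use continuity of the solution to see it is relatively open, test the equation against $\rho\,\chi_R$ with smooth cutoffs $\chi_R$ approximating $\mathbbm{1}_{\Omega_-}$ (estimating the commutator terms by $C(1+\eps^{-1})/R$), pass $R \to \infty$, and conclude via Gr\"onwall and a contradiction that $\Omega_- = \emptyset$. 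Since $\rho\,\mathbbm{1}_{\Omega_-} = -\rho_-$, this is morally the same negative-part energy estimate as yours; the difference is that you regularize the truncation nonlinearity $j(s) = \tfrac12 (s_-)^2$ by smooth convex $j_\delta$, while the paper regularizes the indicator of the negativity set in $(t,x,z)$. Your route is the standard Stampacchia argument and buys more generality: it needs only the weak-solution regularity $u \in L^2_{\mathrm{loc}}(H^1)$, $\p_t u \in L^2_{\mathrm{loc}}(H^{-1})$ and no pointwise continuity of $u$, whereas the paper's variant leans on the continuity of the solution (available to them since their solutions are strong, by Sobolev embedding) but extends more naturally to the kinetic equation, where the $z$-transport term $-\tfrac{1}{\eps}\p_z(g\rho)$ is absorbed by localizing the cutoff in $z$ as well. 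One peripheral slip in your closing aside: positivity of $n$ via the lemma does not require $\vr \geq 0$, only the bound $|\xi \vr| \leq C\|\vr\|_{H^s_x} =: \Gamma(t)$ with $Q = 0$; it is only $h$ (where $Q = \alpha\vr$) that needs $\vr \geq 0$ first.
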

	Observe that $n (t,x)$ subjects to $\partial_t n - D_n n + \xi \vr n = 0$ with $n (0,x) = n^{\eps, in} (x) \geq 0$. Here $Q = 0$ and by Sobolev embedding theory
	\begin{equation*}
		\begin{aligned}
			|R(t,x)| = \xi |\vr (t,x)| \leq C \| \vr (t) \|_{H^s_x} : = \Gamma (t) \in L^\infty (0,T) \,.
		\end{aligned}
	\end{equation*}
	Lemma \ref{Lmm-Perthame} thereby implies $n (t,x) \geq 0$.
	
	Recall that $h(t,x)$ obeys $\partial_t h - D_h \Delta_x h + \beta h = \alpha \vr$ with $h (0,x) = h^{\eps, in} (x) \geq 0$. Here $R(t,x) \equiv \beta \in L^\infty (0,T)$ and $Q(t,x) = \alpha \vr (t,x) \in C(0,T; H^{s'}_x)$ for any $s' < s$. Once $\vr (t,x) \geq $ holds, Lemma \ref{Lmm-Perthame} therefore yields $h (t,x) \geq 0$.
	
	It remains to prove $\rho (t,x,z) \geq 0$, which ensures that $\vr (t,x) = \int_{ \T_w } \rho (t,x,z) \d z \geq 0$. However, one fails to directly apply Lemma \ref{Lmm-Perthame} to verify the positivity of $\rho (t,x,z)$, since it obeys
	\begin{equation}\label{rho-Equ}
		\left\{
		\begin{aligned}
			& \partial_t \rho = D (z) \Delta_x \rho - \tfrac{1}{\eps} (g (z,h) \rho) + \gamma n \rho \,, \\
			& \rho (0,x,z) = \rho^{\eps,in} (x,z) \geq 0 \,,
		\end{aligned}
	    \right.
	\end{equation}
    which is not a parabolic equation. However, one can employ the similar arguments of proving Lemma \ref{Lmm-Perthame} in \cite{Perthame-2015-BOOK}. As shown in part (1) of Theorem \ref{Thm1}, there hold
    \begin{equation*}
    	\begin{aligned}
    		\rho \in C ([0,T) \times \T^3 \times \T_w) \,, \ h, n \in C ([0,T) \times \T^3) \,.
    	\end{aligned}
    \end{equation*}
    Define a set
    \begin{equation}\label{Omega-}
    	\begin{aligned}
    		\Omega_- : = \big\{(t,x,z) \in [0,T) \times \T^3 \times \T_w ; \rho (t,x,z) < 0 \} \,.
    	\end{aligned}
    \end{equation}
    Our goal is to prove $\Omega_- = \emptyset$. Since $\rho (0,x,z) = \rho^{\eps,in} (x,z) \geq 0$, one has $\Omega_-^0 : = \big\{(t,x,z) \in \{0\} \times \T^3 \times \T_w ; \rho (0,x,z) < 0 \} = \emptyset$, which is consistent with our goal.

    Assume that $\Omega_- \neq \emptyset$, the continuity of $\rho$ implies that $\Omega_-$ is relatively open in $[0,T) \times \T^3 \times \T_w$. Then, for any $R \gg 1$ there is a compact set $K_R \subseteq \Omega_-$ such that $\mathbf{dist} (K_R, \partial \Omega_-) = \frac{1}{R}$. Take a cutoff function $\chi_R (t,x,z) \in C^\infty_0 (\Omega_-)$ satisfying
    \begin{equation}\label{Cutoff}
    	\begin{aligned}
    		& 0 \leq \chi_R (t,x,z) \leq 1 \,, \ \chi_R (t,x,z) \equiv 1 \ \textrm{ on } K_R \,, \\
    		& |\partial_t \chi_R| + |\nabla_x \chi_R| + |\partial_z \chi_R| \leq \frac{C}{R} \,, \ |\Delta_x \chi_R| \leq \frac{C}{R^2}
    	\end{aligned}
    \end{equation}
    for some constant $C > 0$. Multiplying \eqref{rho-Equ} by $\rho \chi_R$ and integrating by parts over $(t,x,z) \in [0,t] \times \T^3 \times \T_w$ infer that
    \begin{equation}\label{Pos-1}
    	\begin{aligned}
    		& \tfrac{1}{2} \iint_{\T^3 \times \T_w} \rho^2 \chi_R (t) \d z \d x + \int_0^t \iint_{\T^3 \times \T_w} D(z) |\nabla_x \rho|^2 \chi_R \d x \d z \d t' \\
    		= & \tfrac{1}{2} \int_0^t \iint_{\T^3 \times \T_w} \rho^2 \partial_t \chi_R \d z \d x \d t' + \tfrac{1}{2} \int_0^t \iint_{\T^3 \times \T_w} D(z) \rho^2 \Delta_x \chi_R \d z \d x \d t' \\
    		& + \gamma \int_0^t \iint_{\T^3 \times \T_w} n \rho^2 \chi_R \d z \d x \d t' - \tfrac{k_V}{2 \eps} \int_0^t \iint_{\T^3 \times \T_w} \rho^2 \chi_R \d z \d x \d t' \\
    		& - \tfrac{1}{2 \eps} \int_0^t \iint_{\T^3 \times \T_w} g(z,h) \rho^2 \partial_z \chi_R \d z \d x \d t'
    	\end{aligned}
    \end{equation}
    for all $t \in [0, T)$. Together with \eqref{Cutoff} and $\rho \,, g (z,h) \in L^\infty (0,T; H^s_{x,z}) \,, n \in L^\infty (0,T; H^s_x)$ derived from the part (1) of Theorem \ref{Thm1}, one infers that the right-hand side of the previous equality \eqref{Pos-1} can be bounded by $C (1 + \frac{1}{\eps}) \frac{1}{R}$. Therefore, using the Lebesgue Dominated Convergence theorem, we can pass to the limit at $R \to \infty$ and obtain
    \begin{equation}
    	\begin{aligned}
    		\tfrac{1}{2} & \iint_{\T^3 \times \T_w} \rho^2 \mathbbm{1}_{\Omega_-} (t) \d z \d x \\
    		\leq & \gamma \int_0^t \iint_{\T^3 \times \T_w} n \rho^2 \mathbbm{1}_{\Omega_-} (t') \d z \d x \d t' - \tfrac{k_V}{2 \eps} \int_0^t \iint_{\T^3 \times \T_w} \rho^2 \mathbbm{1}_{\Omega_-} (t') \d z \d x \d t' \\
    		\leq & C (1 + \tfrac{1}{\eps}) ( 1 + \| n \|_{L^\infty(0,T; H^s_x)} ) \int_0^t \iint_{\T^3 \times \T_w} \rho^2 \mathbbm{1}_{\Omega_-} (t') \d z \d x \d t' \,.
    	\end{aligned}
    \end{equation}
    The Gr\"onwall inequality implies that $\iint_{\T^3 \times \T_w} \rho^2 \mathbbm{1}_{\Omega_-} (t) \d z \d x \leq 0$ for all $t \in [0,T)$, which means that
    \begin{equation*}
    	\begin{aligned}
    		\iiint_{\Omega_-} \rho^2 \d z \d x \d t = 0 \,.
    	\end{aligned}
    \end{equation*}
	Since $\Omega_-$ is a relatively open set in $[0,T) \times \T^3 \times \T_w$ and $\rho \in C(\Omega_-)$, we have $\rho \equiv 0$ on $\Omega_-$. This is contradicted to the  definition of the set $\Omega_-$ in \eqref{Omega-}. Namely, $\Omega_- = \emptyset$. As a result, $\rho (t,x,z) \geq 0$ for all $(t,x,z) \in [0,T) \times \T^3 \times \T_w $.
	
	\vspace*{3mm}
	{\em Step 2. Conservation laws.}
	\vspace*{3mm}
	
	The next is aimed at justifying the conservation laws of $\rho$ and $n$ under the further initial assumption \eqref{IC-Average}. One integrates, respectively, the first and the third equations of \eqref{EECPK} over $\T^3 \times \T_w$ and $\T^3$. Then there hold
	\begin{equation*}
		\begin{aligned}
			& \frac{\d}{\d t} \iint_{ \T^3 \times \T_w } \rho \d z \d x = \gamma \iint_{ \T^3 \times \T_w } n \rho \d z \d x \,, \\
			& \frac{\d}{\d t} \int_{ \T^3 } n \d x = - \xi \int_{ \T^3 } \vr n \d x = - \xi \iint_{ \T^3 \times \T_w } n \rho \d z \d x \,,
		\end{aligned}
	\end{equation*}
    which means that
	\begin{equation*}
		\begin{aligned}
			\frac{\d}{\d t} \left( \iint_{ \T^3 \times \T_w } \rho (t,x,z) \d z \d x + \frac{\gamma}{\xi} \int_{ \T^3 } n (t,x) \d x \right) = 0 \,.
		\end{aligned}
	\end{equation*}
	Recall that $\rho, n \geq 0$ provided that $\rho^{\eps,in}, n^{\eps, in} \geq 0$. It therefore infers that
	\begin{equation*}
		\begin{aligned}
			\frac{\d}{\d t} \int_{ \T^3 } n \d x = - \xi \iint_{ \T^3 \times \T_w } n \rho \d z \d x \leq 0 \,,
		\end{aligned}
	\end{equation*}
    hence, it is a decreasing function in $t \geq 0$. Together with $\int_{ \T^3 } n \d x \geq 0$ and $\int_{ \T^3 } n^{\eps,in} \d x = 0$ (see \eqref{IC-Average}), one sees $\int_{ \T^3 } n \d x \equiv 0$. In other words, $\frac{\d}{\d t} \iint_{ \T^3 \times \T_w } \rho (t,x,z) \d z \d x = 0$. This finishes the proof of part (2) in Theorem \ref{Thm1} under the initial assumption \eqref{IC-Average}.
\end{proof}

\subsection{\bf Long time existence around $(0,0,0)$}\label{Subsec2.4}

In this subsection, the goal is to prove the global-in-time existence of the \eqref{EECPK} around the steady state $(0,0,0)$ under the initial assumptions \eqref{IC-K1}-\eqref{IC-K2}-\eqref{IC-Average} with $\rho_0 = 0$. Hence the proof of part (3) of Theorem \ref{Thm1} will be finished. Especially, the necessity of the initial hypothesis \eqref{IC-Average} will be illustrated later.

Roughly speaking, while proving the global-in-time existence around a steady state, one should derive a form of energy differential inequality
\begin{equation*}
	\begin{aligned}
		\tfrac{\d}{\d t} \mathcal{E}_{nergy} (t) + \mathcal{D}_{issipative} (t) \leq \mathbb{P} ( \mathcal{E}_{nergy} (t) ) \mathcal{D}_{issipative} (t) \,,
	\end{aligned}
\end{equation*}
where $\mathbb{P} (\cdot)$ is a strictly increasing function and $\mathbb{P} ( 0 ) = 0$. Here $\mathcal{E}_{nergy} (t)$ and $\mathcal{D}_{issipative} (t)$ represent the abstract forms of energy functional and dissipative rate, respectively. Once the initial energy $\mathcal{E}_{nergy} (0)$ is sufficiently small such that so $\mathbb{P} ( \mathcal{E}_{nergy} (0) )$ is, one can derive the quantity $\mathcal{E}_{nergy} (t) + \int_0^t \mathcal{E}_{nergy} (t') \d t'$ admits a upper bound uniform in time $t \geq 0$ by continuity arguments. This concludes our global-in-time existence around a steady state.

In the \eqref{EECPK} system, as shown in Lemma \ref{lemma 2.1} above,
\begin{equation}
	\begin{aligned}
		\mathcal{E}_{nergy} (t) \thicksim & \| \rho \|^2_{H^s_{x,z}} + \| \vr \|^2_{H^s_{x}} + \| h \|^2_{H^s_{x}} + \| n \|^2_{H^s_{x}} \,, \\
		\mathcal{D}_{issipative} (t) \thicksim & \| \nabla_x \rho \|^2_{H^s_{x,z}(D)} + \| \nabla_x h \|^2_{H^s_{x}} + \| h \|^2_{H^s_{x}} + \| \nabla_x \vr \|^2_{H^s_{x}} + \| \nabla_x n \|^2_{H^s_{x}} \,.
	\end{aligned}
\end{equation}
While doing $L^2$ estimates on the $\rho$ and $n$ equations in \eqref{EECPK}, there hold
\begin{align*}
	\tfrac{1}{2} \tfrac{\d}{\d t} \| \rho \|_{L^2_{x,z}}^2 + \| \nabla_x \rho \|_{L^2_{x,z}(D)}^2 = & - \tfrac{1}{\eps} \l \p_z ( g \rho ) , \rho \r_{L_{x,z}^2} + \gamma \l n \rho , \rho \r_{L_{x,z}^2} \,, \\
	\tfrac{1}{2} \tfrac{\d}{\d t} \| n \|_{L^2_{x}}^2 + D_n \| \nabla_x n \|_{L^2_{x}}^2 = & - \xi \l \vr n, n \r_{L_{x}^2} = - \xi \l n \rho , n \r_{L_{x,z}^2} \,.
\end{align*}
Here the quantities $\gamma \l n \rho , \rho \r_{L_{x,z}^2}$ and $- \xi \l n \rho , n \r_{L_{x,z}^2}$ should be hoped to be bounded by
$$\| n \|_{H^s_x} \| \nabla_x \rho \|^2_{H^s_{x,z} (D)} \quad \textrm{and} \quad \| \rho \|_{H^s_{x,z}} \| \nabla_x n \|^2_{H^s_x} \,,$$
respectively, so that they can be both controlled by $[\mathcal{E}_{nergy} (t) ]^\frac{1}{2} \mathcal{D}_{issipative} (t)$. However, this shall be yielded by the Poincar\'e inequality with zero-average conditions, i.e., $\iint_{\T^3 \times \T_w} \rho \d z \d x = \int_{ \T^3 } n \d x = 0$. But these two conditions can not be assumed in advance. They should be derived from some initial assumptions. Thus the initial conditions \eqref{IC-K2}-\eqref{IC-Average} with $\rho_0 = 0$ are necessary.

As indicated in part (2) of Theorem \ref{Thm1}, the initial conditions \eqref{IC-K2} and \eqref{IC-Average}, i.e., $\rho^{\eps,in}, h^{\eps, in}, n^{\eps,in} \geq 0$ and $\iint_{ \T^3 \times \T_w } \rho^{\eps, in} \d z \d x = \int_{ \T^3 } n^{\eps, in} \d x = 0$, imply
\begin{equation*}
	\begin{aligned}
		\rho (t,x,z) \geq 0 \,, \ n (t,x) \geq 0 \,, \ \iint_{ \T^3 \times \T_w } \rho (t,x,z) \d z \d x = 0 \,, \ \int_{ \T^3 } n (t,x) \d x = 0 \,.
	\end{aligned}
\end{equation*}
These mean that $\rho (t,x,z) \equiv 0$ and $n (t,x) \equiv 0$. The \eqref{EECPK} system thereby reduces to
\begin{equation*}
	\left\{
	  \begin{aligned}
	  	\partial_t h = & D_h \Delta_x h - \beta h \,, \\
	  	h (0,x) = & h^{\eps,in} (x) \geq 0 \,.
	  \end{aligned}
	\right.
\end{equation*}
Once the notations $\mathscr{E}_s (\rho, h, n) $ and $\mathscr{D}_s (\rho, h, n) $ are still employed even $\rho = n = 0$, one can easily derive
\begin{equation*}
	\begin{aligned}
		\sup_{ t \geq 0 } \, \mathscr{E}_s (\rho, h, n) (t) + \int_0^\infty \mathscr{D}_s (\rho , h , n) (\varsigma) \d \varsigma \leq \mathcal{C}_1 \mathscr{E}_s (\rho^{\eps, in}, h^{\eps, in}, n^{\eps, in})
	\end{aligned}
\end{equation*}
with some constant $C_1 > 0$ uniform in $t \geq 0$. Consequently, the proof of part (3) in Theorem \ref{Thm1} is finished.

\section{Well-posedness of \eqref{AD-EECP} model: Proof of Theorem \ref{Thm3}}\label{Sec:AD-EECP}

This section is aimed at proving Theorem \ref{Thm3}. Namely, we will first employ the energy method to verify the local in time existence with large initial data. Then the positivity of the solution constructed in part (1) of Theorem \ref{Thm3} will be proved under the condition \eqref{IC-A2}. Finally, under the further initial condition \eqref{IC-Average-a} with $\vr_0 = \vr_a \geq 0$, one will justify the global-in-time existence of \eqref{AD-EECP} system around the steady state $(\vr_a, h_a, 0)$ with $h_a = \frac{\alpha}{\beta} \vr_a \geq 0$, i.e., the part (3) of Theorem \ref{Thm3}.

\subsection{A priori estimates to \eqref{AD-EECP} model}\label{A_Priori to the limit equation}

In this subsection, the a priori estimate to the system \eqref{AD-EECP} will be accurately derived from employing the energy method. We now introduce the following energy functional $E_l (t)$, $\mathbb{E}_l (t)$, and energy dissipative rate $D_l (t)$, $\mathbb{D}_l (t)$:
\begin{equation}\label{AD-Loc-Ener}
  \begin{aligned}
    E_l (t) & = \| \vr \|_{H^s_x}^2 + \| h \|_{H^{s}_{x}}^2 + \| n \|_{H^{s}_{x}}^2 \,, \\
    D_l (t) & = \tfrac{1}{2} \| \nabla_x \vr \|^2_{H^s_x (\widetilde{D})} + D_h \| \nabla_x h \|^2_{H^s_x} + \tfrac{\beta}{2} \| h \|^2_{H^s_x} + D_n \| \nabla_x n \|^2_{H^s_x} \,, \\
    \mathbb{E}_l (t) & = \| \widetilde{D} (h) \|_{H^s_x}^2 \,, \ \mathbb{D}_l (t) = D_h \| \nabla_x \widetilde{D} (h) \|_{H^s_x}^2 \,.
  \end{aligned}
\end{equation}

Then we have the following lemma.

\begin{lemma}\label{Lmm-AD}
  Let fixed integer $s\geq 4$ and $\widetilde{D} (\cdot) $ satisfy the assumption $(\mathrm{H}2)$. Assume that $( \vr , h )$ is a sufficiently smooth solution to system \eqref{AD-EECP} on the interval $[0,T]$. Then
\begin{align*}
  & \tfrac{\d}{\d t} E_l (t) + D_l (t) \leq C E_l (t) \mathbb{D}_l (t) + C \big( 1 + E_l^s (t) \big) E_l (t) \,, \\
  & \tfrac{\d}{\d t} \mathbb{E}_l (t) + \mathbb{D}_l (t) \leq C ( 1 + E_l^{s+1} (t) )
\end{align*}
holds for all $t\in[0,T]$ and some constant $C > 0$.
\end{lemma}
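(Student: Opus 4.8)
The plan is to prove the two differential inequalities separately by the energy method: the first is a Sobolev energy estimate on \eqref{AD-EECP} in which the nonlinear diffusion is handled by exploiting the extra dissipation $\mathbb{D}_l$, and the second is an auxiliary estimate on the transported quantity $\widetilde{D}(h)$. For the first inequality, I would apply $\p_x^k$ to each of the three equations for $0 \leq |k| \leq s$, pair with $\p_x^k\vr$, $\p_x^k h$, $\p_x^k n$ respectively in $L^2_x$, and integrate by parts. The $h$- and $n$-equations produce standard parabolic terms together with reaction terms $\alpha\p_x^k\vr$, $\gamma\p_x^k(n\vr)$ and $\xi\p_x^k(\vr n)$ that are controlled by $C(1+E_l^s)E_l$ via Hölder, the embeddings $H^1_x\hookrightarrow L^4_x$ and $H^2_x\hookrightarrow L^\infty_x$, and Moser-type product rules, exactly as in Lemma \ref{lemma 2.1}. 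The only delicate contribution comes from the diffusion term of the $\vr$-equation.

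After one integration by parts and the Leibniz expansion, $-\langle \nabla_x\p_x^k(\widetilde{D}(h)\vr), \nabla_x\p_x^k\vr\rangle$ yields the good dissipation $-\|\nabla_x\p_x^k\vr\|^2_{L^2_x(\widetilde{D})}$ from the top-order term $\widetilde{D}(h)\nabla_x\p_x^k\vr$, plus commutators in which at least one derivative falls on $\widetilde{D}(h)$. The single dangerous commutator is the one in which \emph{all} derivatives fall on $\widetilde{D}(h)$ and none on $\vr$, namely (schematically) $\langle (\nabla_x\p_x^k\widetilde{D}(h))\,\vr, \nabla_x\p_x^k\vr\rangle$; here the factor $\nabla_x\p_x^k\widetilde{D}(h)$ carries $|k|+1$ derivatives and, since $\mathbb{E}_l$ only controls $s$ derivatives of $\widetilde{D}(h)$, it is genuinely not dominated by $E_l$. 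Rather than estimating the term by $\|\vr\|_{L^\infty_x}\|\nabla_x\widetilde{D}(h)\|_{H^s_x}\|\nabla_x\vr\|_{H^s_x(\widetilde{D})}\lesssim E_l^{1/2}D_l$, which would force small data (compare \eqref{Abst-0}), I would bound it by $E_l^{1/2}D_l^{1/2}\mathbb{D}_l^{1/2}$, using $\|\vr\|_{L^\infty_x}\lesssim E_l^{1/2}$, $\|\nabla_x\p_x^k\widetilde{D}(h)\|_{L^2_x}\lesssim\mathbb{D}_l^{1/2}$, and $\|\nabla_x\p_x^k\vr\|_{L^2_x}\leq d^{-1/2}\|\nabla_x\p_x^k\vr\|_{L^2_x(\widetilde{D})}\lesssim D_l^{1/2}$, and then apply Young's inequality to split it as $\tfrac14\|\nabla_x\p_x^k\vr\|^2_{L^2_x(\widetilde{D})}+CE_l\mathbb{D}_l$. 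All intermediate commutators $\p_x^a\widetilde{D}(h)\p_x^b\vr$ with $|b|\geq 1$ carry a gradient on $\vr$ (hence a factor $D_l^{1/2}$) and at most $s$ derivatives on $\widetilde{D}(h)$ (controlled by composition estimates and $E_l$), so they are absorbed into $\tfrac14 D_l + C(1+E_l^s)E_l$. Summing over $|k|\leq s$ and absorbing the leftover dissipation into $D_l$ gives the first inequality.

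For the second inequality, I would first derive the evolution of $w:=\widetilde{D}(h)$. From $\p_t w=\widetilde{D}'(h)\p_t h$ and $D_h\Delta_x w = D_h\widetilde{D}'(h)\Delta_x h + D_h\widetilde{D}''(h)|\nabla_x h|^2$, substituting the $h$-equation gives
\begin{equation*}
  \p_t w = D_h\Delta_x w - D_h\widetilde{D}''(h)|\nabla_x h|^2 - \beta\widetilde{D}'(h)h + \alpha\vr\widetilde{D}'(h) \,.
\end{equation*}
Applying $\p_x^k$ for $|k|\leq s$, pairing with $\p_x^k w$ and integrating by parts, the Laplacian produces the dissipation $-\mathbb{D}_l$ which passes to the left, and it remains to bound the three nonlinearities by $C(1+E_l^{s+1})$. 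The linear-in-$h$ pieces $-\beta\widetilde{D}'(h)h$ and $\alpha\vr\widetilde{D}'(h)$ are controlled by composition and product estimates using the bound $b<\infty$ from (H2), yielding polynomial factors of the form $1+E_l^{s+1}$.

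The hard part will be the quadratic term $D_h\widetilde{D}''(h)|\nabla_x h|^2$. After $\p_x^k$ and Leibniz, its most dangerous piece is the one in which up to $s+1$ spatial derivatives land on a single factor of $h$ (e.g. the term $K_{43}=2D_h\langle \widetilde{D}''(h)\p_x^k h\,\Delta_x h, \p_x^k\widetilde{D}(h)\rangle$ of Remark \ref{Rmk1-3}), which cannot be placed in $L^\infty_x$. I would put the most-differentiated factor in $L^2_x$, the second gradient factor in $L^\infty_x$ via $H^2_x\hookrightarrow L^\infty_x$, and the composition coefficient $\widetilde{D}''(h)$ together with its derivatives in $L^\infty_x$ through (H2) and Fa\`a di Bruno, obtaining bounds such as $\|h\|_{H^s_x}\|\Delta_x h\|_{H^2_x}(1+\|h\|^{s-1}_{H^{s-1}_x})\|\nabla_x h\|_{H^{s-1}_x}\lesssim(1+\|h\|^{s-1}_{H^s_x})\|h\|^2_{H^s_x}\lesssim 1+E_l^{s+1}$. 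It is precisely the need for $\|\Delta_x h\|_{H^2_x}\leq\|h\|_{H^s_x}$ — enough smoothness to place the second gradient in $L^\infty$ — that forces $s\geq 4$. Finally I emphasize that the two inequalities are kept separate and are \emph{not} added together, since summing them would reintroduce the unfavorable coupling $E_l^{1/2}D_l$ of \eqref{Abst-0}.
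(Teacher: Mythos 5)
Your proposal is correct and follows essentially the same route as the paper's proof: the same isolation of the all-derivatives-on-$\widetilde{D}(h)$ commutator bounded via $E_l^{1/2}D_l^{1/2}\mathbb{D}_l^{1/2}$ and Young's inequality (the paper's $J_1$ in \eqref{J1}), the same auxiliary evolution equation \eqref{equation 5.5} for $\widetilde{D}(h)$ obtained by multiplying the $h$-equation by $\widetilde{D}'(h)$, the same identification of the $K_{43}$-type term as the reason $s\geq 4$ is needed through $\| \Delta_x h \|_{H^2_x} \leq \| h \|_{H^s_x}$, and the same insistence on keeping the two inequalities separate. One small correction: in the intermediate commutators $W_k$ the gradient falling on $\vr$ must be counted as energy ($\| \nabla_x \vr \|_{H^{s-1}_x} \leq \| \vr \|_{H^s_x} \lesssim E_l^{1/2}$) rather than as a factor $D_l^{1/2}$ as your parenthetical suggests --- counting both it and the pairing factor $\nabla_x \p^k_x \vr$ as dissipation would reintroduce the unabsorbable structure $E_l^{1/2} D_l$ --- and with this energy counting your stated absorption $\tfrac{1}{4} D_l + C (1 + E_l^s) E_l$ is exactly the paper's bound \eqref{J2}.
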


Moreover, the following lemma will be frequently used when estimating the quantities $\p^k_x \widetilde{D} (h)$ with $1 \leq |k| \leq s$.

\begin{lemma}\label{Lmm-D(h)}
	Assume that $\widetilde{D} (\cdot) $ satisfies the assumption $(\mathrm{H}2)$. Then, for all $1 \leq |k| \leq s \, (s \geq 3)$, there hold
	\begin{equation*}
	  \begin{aligned}
	    \| \p_x^k \widetilde{D} (h_0+h) \|_{L^2_x} & \lesssim ( 1 + \| h \|^{s-1}_{H_x^{s-1}} ) \| \nabla_x h \|_{H_x^{s-1}} \,, \\
	    \| \p_x^k \widetilde{D} (h_0+h) \|_{L^4_x} & \lesssim ( 1 + \| h \|^{s-1}_{H_x^s} ) \| \nabla_x h \|_{H_x^s} \,, \\
	    \| \p_x^k \widetilde{D} (h_0+h) \|_{L^\infty_x} & \lesssim ( 1 + \| h \|^{s-1}_{H_x^{s+1}} ) \| \nabla_x h \|_{H_x^{s+1}} \,.
	  \end{aligned}
	\end{equation*}
    for any constant $h_0 \geq 0$.
\end{lemma}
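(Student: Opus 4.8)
The plan is to reduce all three inequalities to the higher-order chain rule together with the observation that, under the hypothesis $(\mathrm{H}2)$, every derivative of $\widetilde{D}$ up to order $s$ is bounded on $\R$ by the single constant $b$. First I would expand, for any multi-index $1 \le |k| \le s$, by Fa\`a di Bruno's formula,
\begin{equation*}
  \p_x^k \widetilde{D}(h_0+h) = \sum_{j=1}^{|k|} \widetilde{D}^{(j)}(h_0+h) \sum_{\substack{\mu_1+\cdots+\mu_j=k\\ |\mu_i|\ge1}} c_{\mu}\, \p_x^{\mu_1}h\cdots\p_x^{\mu_j}h \,,
\end{equation*}
noting that the constant $h_0\ge0$ enters only through the argument of $\widetilde{D}^{(j)}$, since $\p_x(h_0+h)=\p_x h$. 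Because $h_0+h$ is real-valued and $(\mathrm{H}2)$ gives $\sup_{\omega\in\R}|\widetilde{D}^{(j)}(\omega)|\le b$ for all $0\le j\le s$, and here $j\le|k|\le s$, each prefactor satisfies $\|\widetilde{D}^{(j)}(h_0+h)\|_{L^\infty_x}\le b$ uniformly in $h_0$ and $h$. Thus $h_0$ is harmless, and it remains to control the homogeneous products $\p_x^{\mu_1}h\cdots\p_x^{\mu_j}h$ in $L^2_x$, $L^4_x$, and $L^\infty_x$.

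Second, I would estimate each product by the classical Moser--Gagliardo--Nirenberg multiplicative inequality: for $\sum_{i=1}^j|\mu_i|=|k|$ with each $|\mu_i|\ge1$,
\begin{equation*}
  \big\| \p_x^{\mu_1}h\cdots\p_x^{\mu_j}h \big\|_{L^p_x} \lesssim \|h\|_{L^\infty_x}^{\,j-1}\sum_{|m|=|k|}\| \p_x^m h \|_{L^p_x} \,, \qquad p\in\{2,4,\infty\} \,.
\end{equation*}
The one place where genuine interpolation is needed is precisely here: one cannot simply place the top-order factor in $L^2_x$ and the remaining factors in $L^\infty_x$, because that split over-counts regularity and fails to reproduce the advertised tame bound. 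Gagliardo--Nirenberg instead redistributes the derivatives so that exactly one full-order factor survives, while the other $j-1\le|k|-1\le s-1$ factors cost only $\|h\|_{L^\infty_x}$ each.

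Third, I would convert the right-hand side into the stated Sobolev norms using the three-dimensional embeddings $H^1_x\hookrightarrow L^4_x$ and $H^2_x\hookrightarrow L^\infty_x$, writing each $|k|$-th order derivative $\p_x^m h$ (with $|m|=|k|\ge1$) as a derivative of $\nabla_x h$ of order $|m|-1$. This yields $\sum_{|m|=|k|}\|\p_x^m h\|_{L^2_x}\lesssim\|\nabla_x h\|_{H^{s-1}_x}$, $\sum_{|m|=|k|}\|\p_x^m h\|_{L^4_x}\lesssim\|\nabla_x h\|_{H^{s}_x}$, and $\sum_{|m|=|k|}\|\p_x^m h\|_{L^\infty_x}\lesssim\|\nabla_x h\|_{H^{s+1}_x}$, while $\|h\|_{L^\infty_x}\lesssim\|h\|_{H^{s-1}_x}\le\|h\|_{H^{s+1}_x}$ by $H^2_x\hookrightarrow L^\infty_x$; the hypothesis $s\ge3$ is used exactly here to secure $H^{s-1}_x\hookrightarrow L^\infty_x$. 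Summing the finitely many Fa\`a di Bruno terms and bounding $\|h\|_{L^\infty_x}^{j-1}\lesssim 1+\|h\|_{H^\sigma_x}^{s-1}$ (valid since $0\le j-1\le s-1$) for the relevant $\sigma\in\{s-1,s,s+1\}$ produces the three claimed inequalities. The main obstacle is thus the second step, namely selecting the Moser interpolation so that the tame structure $(1+\|h\|^{s-1})\|\nabla_x h\|$ emerges; tracking that the number of composition derivatives never exceeds $|k|\le s$ (so that $(\mathrm{H}2)$ applies) and that the constant shift $h_0$ is inert is then routine bookkeeping.
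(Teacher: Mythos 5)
Your proposal is correct, and it fills in exactly the argument the paper alludes to: the paper omits the proof entirely, saying only that the lemma ``can be directly verified by the Sobolev theory,'' and your route --- Fa\`a di Bruno with $\sup_{\omega}|\widetilde{D}^{(j)}(\omega)|\leq b$ for $j\leq s$ from (H2) (so the constant shift $h_0$ is indeed inert), Moser--Gagliardo--Nirenberg redistribution of derivatives in $L^2_x$, $L^4_x$, $L^\infty_x$, and the embeddings $H^1_x\hookrightarrow L^4_x$, $H^2_x\hookrightarrow L^\infty_x$ with $j-1\leq s-1$ giving the tame factor $1+\|h\|^{s-1}_{H^\sigma_x}$ --- is the standard realization of that claim. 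Your bookkeeping is sound, including the observation that each factor carries at least one derivative (so every term produces the $\|\nabla_x h\|$ factor) and that $s\geq 3$ is what secures $\|h\|_{L^\infty_x}\lesssim\|h\|_{H^{s-1}_x}$.
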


The proof of Lemma \ref{Lmm-D(h)} can be directly verified by the Sobolev theory. The details are omitted for simplicity.

\begin{proof}[Proof of Lemma \ref{Lmm-AD}]

The $L^2$ estimates will be first derived, which indicates the major structures of the energy functional. Then the higher order energy bounds will be obtained, which shall be consistent with the structures of $L^2$-estimates.

\vspace*{2mm}

{\bf Step 1. $L^2$ estimates.}

\vspace*{2mm}

From taking $L^2$-inner product with ${\vr}(t,x)$ on the first equation of system \eqref{AD-EECP} and integrating by parts over $x\in \T^3$, one has
\begin{align}\label{l1}
   \tfrac{1}{2} \tfrac{\d}{\d t} \| \vr \|^2_{L^2_x} + \| \nabla_x \vr \|^2_{L^2_x (\widetilde{D})} = \l \vr \nabla_x \widetilde{D} (h), \nabla_x \vr \r_{L_x^2} + \gamma \l \vr n , \vr \r_{L_x^2}  \,,
\end{align}
By the Sobolev embedding $H_x^2 (\mathbb{T}^3) \hookrightarrow L_x^\infty (\mathbb{T}^3)$, $H_x^1 (\mathbb{T}^3) \hookrightarrow L_x^4 (\mathbb{T}^3)$, the fact $\widetilde{D} (h) \geq d > 0$ and the H\"older inequality, the right-hand side of \eqref{l1} can be bounded by
\begin{equation*}
  \begin{aligned}
    \tfrac{1}{2}\| \nabla_x \vr \|_{L^2_{x}(\widetilde{D})}^2 + C \| \vr\|_{H^2_{x}}^2 \| \nabla_x \widetilde{D} (h) \|_{L^2_{x}}^2 + C \| n \|_{L^2_{x}} \| \vr \|_{H^1_{x}}^2 \,.
  \end{aligned}
\end{equation*}
Consequently,
\begin{align}\label{equation 5.1}
  \tfrac{1}{2} \tfrac{\d}{\d t} \| \vr \|_{L^2_{x}}^2 + \tfrac{1}{2} \| \nabla_x \vr \|_{L^2_{x}(\widetilde{D})}^2 \leq C \| \vr\|_{H^2_{x}}^2 \| \nabla_x \widetilde{D} (h) \|_{L^2_{x}}^2 + C \| n \|_{L^2_{x}} \| \vr \|_{H^1_{x}}^2 \,.
\end{align}
The next is to take $L^2$-inner product with $h(t,x)$ on the second equation of system \eqref{AD-EECP} and to integrate by parts over $x \in \T^3$. It infers that
\begin{align*}
  \tfrac{1}{2} \tfrac{\d}{\d t} \| h \|_{L^2_{x}}^2 + D_h \| \nabla_x h \|_{L^2_{x}}^2 + \beta \| h \|_{L^2_{x}}^2 = \alpha \l \vr, h \r_{L^2_x} \leq \tfrac{\beta}{2} \| h \|_{L^2_{x}}^2 + \tfrac{\alpha^2}{2\beta} \| \vr \|_{L^2_{x}}^2 \,.
\end{align*}
Namely,
\begin{align}\label{equation 5.2}
  \tfrac{1}{2} \tfrac{\d}{\d t} \| h \|_{L^2_{x}}^2 + D_h \| \nabla_x h \|_{L^2_{x}}^2 + \tfrac{\beta }{2} \| h \|_{L^2_{x}}^2 \leq \tfrac{\alpha^2}{2\beta} \| \vr \|_{L^2_{x}}^2 \,.
\end{align}
Finally, from taking $L^2$-inner product with $n(t,x)$ on the third equation of system \eqref{AD-EECP} and integrating by parts over $x\in \mathbb{T}^3$,
\begin{align}\label{equation 5.3}
  \tfrac{1}{2} \tfrac{\d}{\d t} \| n \|_{L^2_x}^2 + D_n \| \nabla_x n \|_{L^2_x}^2 = - \xi \l \vr n, n \r_{L_x^2} \leq C \| \vr \|_{L^2_x} \| n \|_{H^1_x}^2 \,,
\end{align}
where the last inequality is derived from the Sobolev embedding theory. Together with the inequalities \eqref{equation 5.1}-\eqref{equation 5.2}-\eqref{equation 5.3}, there holds
\begin{align}\label{equation 5.4}
  & \tfrac{1}{2} \tfrac{\d}{\d t} ( \| \vr \|_{L^2_{x}}^2 + \| h \|_{L^2_{x}}^2 + \| n \|_{L^2_{x}}^2)\no \\
   & + \tfrac{1}{2} \| \nabla_x \vr \|_{L^2_{x} (\widetilde{D})}^2 + D_h \| \nabla_x h \|_{L^2_{x}}^2 + \tfrac{\beta}{2} \| h \|_{L^2_{x}}^2 + D_n \| \nabla_x n \|_{L^2_{x}}^2 \no \\
   & \lesssim E_l (t) \mathbb{D}_l (t) + E_l (t) + E_l^\frac{3}{2} (t) \,.
\end{align}
where the symbols $E_l (t)$ and $\mathbb{D}_l (t)$ are defined in \eqref{AD-Loc-Ener}.

There is a trouble quantity $C \| \nabla_x \widetilde{D} (h) \|_{L^2_{x}}^2 \| \vr \|_{H^2_{x}}^2$ coming from the diffusion term $\Delta_x ( \widetilde{D} (h) \vr )$ when proving the local solution with large initial data. The natural way to control this quantity is to dominate it by $C \sup_{\zeta \in \R } |\widetilde{D}' (\zeta)| \| \nabla_x h \|^2_{L^2_x} \| \vr \|_{H^2_x}^2 $, where the norm $\| \vr \|_{H^2_x}^2$ will be regarded as a part of energy. However, the norm $\| \nabla_x h \|^2_{L^2_x}$ can only be absorbed by the dissipative term $D_h \| \nabla_x h \|^2_{L^2_x}$ under the small size of the norm $\| \vr \|^2_{H^2_x}$, in which the small initial data are required. This is inconsistent with our goal of proving a large initial local solution. In order to overcome this difficulty, we try to seek an extra energy inequality associated with the composed function $\widetilde{D} (h)$. More precisely, we multiply the second $h$-equation of \eqref{AD-EECP} by $\widetilde{D}' (h)$ and then obtain
\begin{align}\label{equation 5.5}
  \p_t \widetilde{D} (h) = D_h \Delta_x \widetilde{D} (h) - D_h \widetilde{D}''(h) |\nabla_x h|^2 - \beta \widetilde{D}' (h) h + \alpha \vr \widetilde{D}' (h) \,,
\end{align}
where the relation
\begin{align*}
  \Delta_x \widetilde{D} (h) = \widetilde{D}' (h) \Delta_x h + \widetilde{D}'' (h) |\nabla_x h|^2
\end{align*}
is utilized. By taking $L^2$-inner product with $\widetilde{D}(h)$ on the equation \eqref{equation 5.5} and integrating by parts over $x\in \T^3$, one has
\begin{equation*}
  \begin{aligned}
    \tfrac{1}{2} \tfrac{\d}{\d t} \| \widetilde{D} (h)  \|_{L^2_{x}}^2 & + D_h \| \nabla_x \widetilde{D} (h) \|_{L^2_{x}}^2 \\
    & = - D_h \l \widetilde{D}'' (h) |\nabla_x h|^2 , \widetilde{D} (h) \r_{L^2_x} - \beta \l \widetilde{D}' (h) h, \widetilde{D} (h) \r_{L^2_x} + \alpha \l \vr \widetilde{D}' (h) , \widetilde{D} (h) \r_{L^2_x} \,.
  \end{aligned}
\end{equation*}
From the H\"older inequality, the Sobolev embedding theory and the hypotheses \eqref{equation 1.12} (i.e., (H2)), the right-hand side of the previous equality can be bounded by
\begin{equation*}
  \begin{aligned}
    \| \nabla_x h \|^2_{L^2_x} + \| h \|_{L^2_x} + \| \vr \|_{L^2_x} \lesssim E_l^\frac{1}{2} (t) + E_l (t) \lesssim 1 + E_l (t) \,.
  \end{aligned}
\end{equation*}
Therefore,
\begin{equation}\label{equation 5.6}
  \begin{aligned}
    \tfrac{1}{2} \tfrac{\d}{\d t} \| \widetilde{D} (h) \|_{L^2_{x}}^2 + D_h \| \nabla_x \widetilde{D}  (h) \|_{L^2_{x}}^2 \lesssim 1 + E_l (t) \,.
  \end{aligned}
\end{equation}

\vspace*{2mm}

{\bf Step 2. Higher order energy estimates.}

\vspace*{2mm}

First, we act $k$-order derivative operators $\partial_x^k$ on the first equation of system \eqref{AD-EECP} for all $1 \leq |k|\leq s$, take $L^2$-inner product by dot with $\p_x^k \vr $ and integrate by parts over $x \in \T^3$. It infers that
\begin{equation}\label{J1J2J3J4}
  \begin{aligned}
    \tfrac{1}{2} \tfrac{\d}{\d t} \| \p^k_x \vr \|^2_{L^2_x} + & \| \nabla_x \p^k_x \vr \|^2_{L^2_x (\widetilde{D})} \\
    = & \underbrace{ - \l \vr \nabla_x \partial_x^k \widetilde{D} (h) , \nabla_x \p^k_x \vr \r_{L^2_x} }_{J_1} + \underbrace{ \l W_k (\vr, h) , \nabla_x \p^k_x \vr \r_{L^2_x} }_{J_2} + \underbrace{ \gamma \l \p^k_x (n \vr) , \p^k_x \vr \r_{L^2_x} }_{J_3} \,,
  \end{aligned}
\end{equation}
where $W_k (\vr, h) = \vr \nabla_x \partial_x^k \widetilde{D} (h) - [\nabla_x \partial_x^k, \widetilde{D} (h)] \vr$ and the communicator operator $[X,Y]=XY-YX$ is employed. By the H\"older inequality, the lower bound $\widetilde{D} (h) \geq d > 0$ in \eqref{equation 1.12} and the Sobolev embedding theory, the quantity $J_1$ can be bounded by
\begin{equation}\label{J1}
	\begin{aligned}
		J_1 \leq \tfrac{1}{4} \| \nabla_x \partial_x^k \vr \|^2_{L^2_x (\widetilde{D})} + C \| \vr \|_{H^2_x}^2 \| \nabla_x \widetilde{D} (h) \|^2_{H^s_x} \leq \tfrac{1}{4} \| \nabla_x \partial_x^k \vr \|^2_{L^2_x (\widetilde{D})} + C E_l (t) \mathbb{D}_l (t) \,.
	\end{aligned}
\end{equation}
Here the symbols $E_l (t)$ and $\mathbb{D}_l (t)$ are defined in \eqref{AD-Loc-Ener}. Note that the $L^2$ norm of $W_k (\vr , h)$ with $1 \leq |k| \leq s$ can be bounded by
\begin{equation*}
	\begin{aligned}
		\| W_k (\vr, h) \|_{L^2_x} \lesssim \| \vr \|_{H^s_x} (1 + \| \nabla_x h \|^{s-1}_{H^{s-1}_x}) \| \nabla_x h \|_{H^{s-1}_x} \lesssim (1 + E_l^\frac{s-1}{2} (t) ) E_l (t) \,,
	\end{aligned}
\end{equation*}
where Lemma \ref{Lmm-D(h)}, the H\"older inequality, the Sobolev embedding theory and the Young's inequality are utilized. There therefore hold
\begin{equation}\label{J2}
  \begin{aligned}
    J_2 \leq & \tfrac{1}{4} \| \nabla_x \partial_x^k \vr \|^2_{L^2_x} + C \| W_k (\vr, h) \|_{L^2_x}^2 \\
    \leq & \tfrac{1}{4} \| \nabla_x \p^k_x \vr \|^2_{L^2_x (\widetilde{D})} + C (1 + E_l^{s-1} (t) ) E_l^2 (t) \,,
  \end{aligned}
\end{equation}
where the lower bound $\widetilde{D} (h) \geq d > 0$ in \eqref{equation 1.12} is also used. Moreover, the term $J_3$ can be dominated by
\begin{equation}\label{J3}
  \begin{aligned}
    J_3 \lesssim ( \| \vr \|_{H^s_x} + \| n \|_{H^s_x} ) ( \| \nabla_x n \|^2_{H^{s-1}_x} + \| \nabla_x \vr \|^2_{H^{s-1}_x} ) \lesssim E_l^\frac{3}{2} (t) \,,
  \end{aligned}
\end{equation}
where the H\"older inequality and the calculus inequality $\| f g \|_{H^s_x} \lesssim \| f \|_{H^s_x} \| g \|_{H^s_x} \ (\forall s \geq 2)$ have been utilized. It is derived from plugging the bounds \eqref{J1}, \eqref{J2} and \eqref{J3} into \eqref{J1J2J3J4} that
\begin{equation}\label{equation 5.12}
  \begin{aligned}
    \tfrac{1}{2} \tfrac{\d}{\d t} \| \p^k_x \vr \|^2_{L^2_x} + \tfrac{1}{2} \| \nabla_x \p^k_x \vr \|^2_{L^2_x (\widetilde{D})} \lesssim E_l (t) \mathbb{D}_l (t) + (1 + E_l^s (t)) E_l (t)
  \end{aligned}
\end{equation}
for all $1 \leq |k| \leq s$.

Next we act $k$-order derivative operator on the second equation of system \eqref{AD-EECP} for all $1 \leq |k| \leq s$, take $L^2$-inner product by dot with $\p_x^k h$ and integrate by parts over $x \in \T^3$. We thereby have
\begin{equation*}
  \begin{aligned}
    \tfrac{1}{2} \tfrac{\d}{\d t} \| \p^k_x h \|^2_{L^2_x} + D_h \| \nabla_x \p^k_x h \|^2_{L^2_x} + \beta \| \p^k_x h \|^2_{L^2_x} = \alpha \l \p^k_x \vr , \p^k_x h \r_{L^2_x} \,.
  \end{aligned}
\end{equation*}
The H\"older inequality and the Young's inequality reduce to
\begin{equation*}
  \begin{aligned}
    \alpha \l \p^k_x \vr , \p^k_x h \r_{L^2_x} \leq \tfrac{\beta}{2} \| \p^k_x h \|^2_{L^2_x} + \tfrac{\alpha^2}{2 \beta} \| \p^k_x \vr \|^2_{L^2_x} \leq \tfrac{\beta}{2} \| \p^k_x h \|^2_{L^2_x} + C \| \nabla_x \vr \|^2_{H^{s-1}_x} \,.
  \end{aligned}
\end{equation*}
Consequently, we have
\begin{equation}\label{equation 5.13}
  \begin{aligned}
    \tfrac{1}{2} \tfrac{\d}{\d t} \| \p^k_x h \|^2_{L^2_x} + D_h \| \nabla_x \p^k_x h \|^2_{L^2_x} + \tfrac{\beta}{2} \| \p^k_x h \|^2_{L^2_x} \lesssim \| \nabla_x \vr \|^2_{H^{s-1}_x} \lesssim E_l (t)
  \end{aligned}
\end{equation}
for all $1 \leq |k| \leq s$.

Finally, from acting $\p_x^k$ on the third equation of system \eqref{AD-EECP} for all $1 \leq |k| \leq s $, taking $L^2$-inner product by dot with $\p_x^k n$ and integrating by parts over $x \in \T^3$, we derive
\begin{equation}\label{l5}
  \begin{aligned}
    \tfrac{1}{2} \tfrac{\d}{\d t} \| \p^k_x n \|^2_{L^2_x} + D_n \| \nabla_x \p^k_x n \|^2_{L^2_x} = - \xi \l \p^k_x (\vr n) , \p^k_x n \r_{L^2_x} \,.
  \end{aligned}
\end{equation}
It is derived from the H\"older inequality and the calculus inequality $\| f g \|_{H^s_x} \lesssim \| f \|_{H^s_x} \| g \|_{H^s_x} (\forall s \geq 2)$ that
\begin{equation*}\label{l6}
  \begin{aligned}
    - \xi \l \p^k_x (\vr n) , \p^k_x n \r_{L^2_x} \lesssim ( \| n \|_{H^s_x} + \| \vr \|_{H^s_x} ) ( \| \nabla_x n \|^2_{H^{s-1}_x} + \| \nabla_x \vr \|^2_{H^{s-1}_x} ) \lesssim E_l^\frac{3}{2} (t) \,.
  \end{aligned}
\end{equation*}
We therefore obtain
\begin{equation}\label{equation 5.14}
  \begin{aligned}
    & \tfrac{1}{2} \tfrac{\d}{\d t} \| n \|^2_{H^s_x} + D_n \| \nabla_x n \|^2_{H^s_x} \lesssim E_l^\frac{3}{2} (t)
  \end{aligned}
\end{equation}
for all $1 \leq |k| \leq s$. Recalling the definitions of $E_l (t)$, $D_l (t)$ and $\mathbb{D}_l (t)$ in \eqref{AD-Loc-Ener}, together with the inequalities \eqref{equation 5.4}, $\eqref{equation 5.12}$, $\eqref{equation 5.13}$ and $\eqref{equation 5.14}$, one has
\begin{equation}\label{equation 5.15}
  \begin{aligned}
    \tfrac{1}{2} \tfrac{\d}{\d t} E_l (t) + D_l (t) \lesssim E_l (t) \mathbb{D}_l (t) + \big( 1 + E_l^s (t) \big) E_l (t) \,.
  \end{aligned}
\end{equation}

One next will deal with the quantity $\mathbb{D}_l (t) : = D_h \| \nabla_x \widetilde{D} (h) \|^2_{H^s_x}$. From applying the $k$-order derivative operator to the evolution of $\widetilde{D} (h)$ in \eqref{equation 5.5} for all $1 \leq |k| \leq s$, taking $L^2$-inner product by dot with $\p_x^k \widetilde{D} (h)$ and integrating by parts over $x\in \T^3$, we derive
\begin{equation}\label{K1K2K3K4}
  \begin{aligned}
    & \tfrac{1}{2} \tfrac{\d}{\d t} \| \p^k_x \widetilde{D} (h) \|^2_{L^2_x} + D_h \| \nabla_x \p^k_x \widetilde{D} (h) \|^2_{L^2_x} \\
    = & \underbrace{ - D_h \sum_{0 \neq a \leq k} \l \p^a_x \widetilde{D}'' (h) \p^{k-a}_x |\nabla_x h|^2, \p^k_x \widetilde{D} (h) \r_{L^2_x} }_{K_1} \ \underbrace{ - \beta \l \p^k_x (\widetilde{D}' (h) h), \p^k_x \widetilde{D} (h) \r_{L^2_x}}_{K_2} \\
    & + \underbrace{ \alpha \l \p^k_x ( \vr \widetilde{D}' (h) ), \p^k_x \widetilde{D} (h) \r_{L^2_x} }_{K_3} \ \underbrace{- D_h \l \widetilde{D}'' (h) \p_x^k | \nabla_x h |^2, \p_x^k \widetilde{D} (h) \r_{L^2_x} }_{K_4} \,.
  \end{aligned}
\end{equation}

By applying the H\"older inequality, Lemma \ref{Lmm-D(h)} and the calculus inequality $\| f g \|_{H^N_x} \lesssim \| f \|_{H^N_x} \| g \|_{H^N_x} \, (\forall N \geq 2)$, the quantity $K_1$ can be bounded by
\begin{equation}\label{K1}
  \begin{aligned}
    K_1 \lesssim ( 1 + \| h \|^{s-1}_{H^{s-1}_x} )^2 \| \nabla_x h \|^2_{H^{s-1}_x} \| | \nabla_x h |^2 \|_{H^{s-1}_x} \lesssim ( 1 + E_l^{s-1} (t) ) E_l^2 (t) \,.
  \end{aligned}
\end{equation}
It is derived from the H\"older inequality, Lemma \ref{Lmm-D(h)} and the conditions \eqref{equation 1.12} that
\begin{equation}\label{K2}
  \begin{aligned}
    K_2 + K_3 \lesssim & ( 1 + \| h \|^{s-1}_{H^{s-1}_x} ) \| \nabla_x h \|_{H^{s-1}_x} \| h \|_{H^s_x} + ( 1 + \| h \|^{s-1}_{H^{s-1}_x} )^2 \| \nabla_x h \|^2_{H^{s-1}_x} \| h \|_{H^s_x} \\
    \lesssim & (1 + E_l^{s-1} (t) ) E_l (t) \,.
  \end{aligned}
\end{equation}
Moreover, by integration by parts, the quantity $K_4$ can be decomposed as four parts:
\begin{equation}\label{K41+K42+K43+K44}
  \begin{aligned}
    K_4 = \underbrace{ 2 D_h \l \widetilde{D}''' (h) |\nabla_x h|^2 \p^k_x h, \p^k_x \widetilde{D} (h) \r_{L^2_x} }_{K_{41}} + \underbrace{ 2 D_h \l \widetilde{D}'' (h) \p^k_x h \nabla_x h, \nabla_x \p^k_x \widetilde{D} (h) \r_{L^2_x} }_{K_{42}} \\
    + \underbrace{ 2 D_h \l \widetilde{D}'' (h) \p^k_x h \Delta_x h, \p^k_x \widetilde{D} (h) \r_{L^2_x} }_{K_{43}} \underbrace{ - \sum_{0 \neq a < k} D_h \l \widetilde{D}' (h) \nabla_x \p^a_x h \cdot \nabla_x \p^{k-a}_x h, \p^k_x \widetilde{D} (h) \r_{L^2_x} }_{K_{44}} \,.
  \end{aligned}
\end{equation}
The term $K_{41}$ can by bounded by
\begin{equation}\label{K41}
  \begin{aligned}
    K_{41} \lesssim & \| \nabla_x h \|^2_{H^2_x} \| h \|_{H^s_x} ( 1 + \| h \|^{s-1}_{H^{s-1}_x} ) \| \nabla_x h \|_{H^{s-1}_x} \lesssim ( 1 + E_l^\frac{s-1}{2} ) E_l^2 (t) \,,
  \end{aligned}
\end{equation}
where the H\"older inequality, the Sobolev embedding $H^2_x (\T^3) \hookrightarrow L^\infty_x (\T^3)$, the condition \eqref{equation 1.12} and Lemma \ref{Lmm-D(h)} have been used. Moreover, the term $K_{42}$ can be controlled by
\begin{equation}\label{K42}
  \begin{aligned}
    K_{42} \leq & C \| \nabla_x h \|_{H^2_x} \| h \|_{H^s_x} \| \nabla_x \p^k_x \widetilde{D} (h) \|_{L^2_x} \leq \tfrac{D_h}{2} \| \nabla_x \p^k_x \widetilde{D} (h) \|_{L^2_x}^2 + C E_l^2 (t) \,.
  \end{aligned}
\end{equation}
For the term $K_{43}$,
\begin{equation}\label{K43}
  \begin{aligned}
    K_{43} \lesssim & \| h \|_{H^s_x} \| \Delta_x h \|_{H^2_x} ( 1 + \| h \|^{s-1}_{H^{s-1}_x} ) \| \nabla_x h \|_{H^{s-1}_x} \lesssim ( 1 + E_l^\frac{s-1}{2} (t) ) E_l^\frac{3}{2} (t) \,,
  \end{aligned}
\end{equation}
where the inequality $\| \Delta_x h \|_{H^2_x} \leq \| h \|_{H^s_x}$ has been used in the last inequality, so that $s \geq 4$ is required. Furthermore, it is derived from the H\"older inequality, the condition \eqref{equation 1.12}, the Sobolev theory and Lemma \ref{Lmm-D(h)} that
\begin{equation}\label{K44}
  \begin{aligned}
    K_{44} \lesssim & \| h \|_{H^4_x} \| h \|_{H^s_x} ( 1 + \| h \|^{s-1}_{H^{s-1}_x} ) \| \nabla_x h \|_{H^{s-1}_x} + ( 1 + \| h \|^{s-1}_{H^{s-1}_x} ) \| \nabla_x h \|_{H^{s-1}_x}^3 \\
    \lesssim & ( 1 + E_l^\frac{s-1}{2} (t) ) E_l^\frac{3}{2} (t) \,,
  \end{aligned}
\end{equation}
where $s \geq 4$ is also required. Then we deduce from plugging \eqref{K41}, \eqref{K42}, \eqref{K43} and \eqref{K44} into \eqref{K41+K42+K43+K44} that
\begin{equation}\label{K4}
  \begin{aligned}
    K_4 \leq \tfrac{D_h}{2} \| \nabla_x \p^k_x \widetilde{D} (h) \|_{L^2_x}^2 + C ( 1 + E_l^\frac{s-1}{2} (t) ) E_l^2 (t) \,.
  \end{aligned}
\end{equation}
Together with the bounds \eqref{K1}, \eqref{K2} and \eqref{K4}, the relations \eqref{equation 5.6} and \eqref{K1K2K3K4} imply that
\begin{equation*}
  \begin{aligned}
    \tfrac{\d}{\d t} \| \widetilde{D} (h) \|^2_{H^s_x} + D_h \| \nabla_x \widetilde{D} (h) \|^2_{H^s_x} \lesssim ( 1 + E_l^s (t) ) E_l (t) + E_l^\frac{1}{2} \,.
  \end{aligned}
\end{equation*}
Recalling the definitions of $E_l (t)$, $\mathbb{E}_l (t)$ and $\mathbb{D}_l (t)$ in \eqref{AD-Loc-Ener}, one obtains
\begin{equation*}
  \begin{aligned}
    \tfrac{\d}{\d t} \mathbb{E}_l (t) + \mathbb{D}_l (t) \lesssim 1 + E_l^{s+1} (t) \,.
  \end{aligned}
\end{equation*}
Thus the proof of Lemma \ref{Lmm-AD} is finished.
\end{proof}

\subsection{Local well-posedness of \eqref{AD-EECP} system with large initial data}\label{Sec: Local-Result to the limit equation}

In this subsection, the goal is to justify the local well-posedness of the system \eqref{AD-EECP} with large initial data \eqref{IC-AD-EECP}, namely to prove the part (1) of Theorem \ref{Thm3}. The linear iterative approximate systems of \eqref{AD-EECP} are initially constructed as follows: for all integers $l \geq 0$,
\begin{align}\label{equation 6.1}
  \left\{
    \begin{array}{ll}
      \p_t \vr^{l+1} - \Delta_x ( \widetilde{D} (h^l) \vr^{l+1} ) = \gamma n^l \vr^{l+1} \,, \\
      \p_t h^{l+1} = D_h \Delta_x h^{l+1} - \beta h^{l+1} + \alpha \vr^{l+1} \,, \\
      \p_t n^{l+1} = D_n \Delta_x n^{l+1} - \xi \vr^l n^{l+1} \,, \\
      ( \vr^{l+1}, h^{l+1}, n^{l+1})|_{t=0} = ( \vr^{in}_a, h^{in}_a, n^{in}_a ) \,.
    \end{array}
  \right.
\end{align}
The iteration starts from
\begin{equation}\label{equation 6.2}
  ( \vr^0 (t,x), h^0 (t,x), n^0 (t,x) ) = ( \vr^{in}_a (x), h^{in}_a (x), n^{in}_a (x) ) \,.
\end{equation}

In the arguments proving the convergence ($l \rightarrow \infty$) of the approximate solutions \eqref{equation 3.1}, it is essential to obtain uniform (in $l \geq 0$) energy estimates of \eqref{equation 6.1} in a uniform lower bound lifespan time, whose derivations are the almost same as the derivations of the a priori estimates for the system \eqref{AD-EECP} with initial data \eqref{IC-AD-EECP} in Lemma \ref{Lmm-AD}. The arguments of the uniform lower bound lifespan time can be referred to \cite{JL-SIAM-2019}, for instance. The convergence arguments are a standard process. For simplicity, we will only consider the a priori estimates in Lemma \ref{Lmm-AD} for the smooth solutions of \eqref{AD-EECP}-\eqref{IC-AD-EECP} on some time interval.

From Lemma \ref{Lmm-AD},
\begin{equation}\label{AD-Loc-1}
  \left\{
    \begin{array}{l}
      \tfrac{\d}{\d t} E_l (t) + D_l (t) \leq C E_l (t) \mathbb{D}_l (t) + C ( 1 + E_l^s (t) ) E_l (t) \,, \\
      \tfrac{\d}{\d t} \mathbb{E}_l (t) + \mathbb{D}_l (t) \leq C ( 1 + E_l^{s+1} (t) ) \,.
    \end{array}
  \right.
\end{equation}
Observe that
\begin{equation*}
  \begin{aligned}
    & E_l (0) = E_l^{in} \,, \ \mathbb{E}_l (0) = \mathbb{E}_l^{in} : = \| \widetilde{D} (h^{in}_a) \|^2_{H^s_x} \leq C_1 (1 + (E_l^{in})^s) \,.
  \end{aligned}
\end{equation*}
Here $E_l^{in}$ is given in \eqref{IC-A1}. For some constant $\mathcal{C}_3 > \max \{ 2, e^{C \mathbb{E}_l^{in}} \} \geq 2$, we define a number
\begin{equation*}
  \begin{aligned}
    T^\star = \sup \{ \tau \geq 0, \sup_{0 \leq t \leq \tau} E_l (t) \leq \mathcal{C}_3 E_l^{in} \} \geq 0 \,.
  \end{aligned}
\end{equation*}
The continuity of $E_l (t)$ implies that $T^\star > 0$. Then the first inequality of \eqref{AD-Loc-1} implies that
\begin{equation}\label{AD-Loc-2}
  \begin{aligned}
    \tfrac{\d}{\d t} E_l (t) + D_l (t) \leq C \big[ 1 + \mathcal{C}_3^s ( E_l^{in} )^s + \mathbb{D}_l (t) \big] E_l (t)
  \end{aligned}
\end{equation}
for all $t \in [0, T^\star]$. Then we deduce from the Gr\"onwall inequality that
\begin{equation}\label{AD-Loc-3}
  \begin{aligned}
    E_l (t) \leq E_l^{in} \exp \big\{ C [ (1 + \mathcal{C}_3^s ( E_l^{in} )^s) t + \int_0^t \mathbb{D}_l (\tau) d \tau ] \big\} \,.
  \end{aligned}
\end{equation}
Furthermore, the second inequality of \eqref{AD-Loc-1} reduces to
\begin{equation}\label{AD-Loc-4}
  \begin{aligned}
    \mathbb{E}_l (t) + \int_0^t \mathbb{D}_l (\tau) d \tau \leq \mathbb{E}_l^{in} + C \big( 1 + \mathcal{C}_3^{s+1} ( E_l^{in} )^{s+1} \big) t
  \end{aligned}
\end{equation}
for $0 \leq t \leq T^\star$. It is implied by \eqref{AD-Loc-3} and \eqref{AD-Loc-4} that for $0 \leq t \leq T^\star$,
\begin{equation*}
  \begin{aligned}
    E_l (t) \leq E_l^{in} \exp \big\{ C \mathbb{E}_l^{in} + C [ 1 + \mathcal{C}_3^s ( E_l^{in} )^s + C ( 1 + \mathcal{C}_3^{s+1} (E_l^{in})^{s+1} ) ] t \big\} \,.
  \end{aligned}
\end{equation*}
If we require $ E_l^{in} \exp \big\{ C \mathbb{E}_l^{in} + C [ 1 + \mathcal{C}_3^s ( E_l^{in} )^s + C ( 1 + \mathcal{C}_3^{s+1} (E_l^{in})^{s+1} ) ] t \big\} < \mathcal{C}_3 E_l^{in} $, which means that
\begin{equation}\label{T0}
  \begin{aligned}
    t < T_0 : = \tfrac{\ln \mathcal{C}_3 - C \mathbb{E}_l^{in}}{ C [ 1 + \mathcal{C}_3^s ( E_l^{in} )^s + C ( 1 + \mathcal{C}_3^{s+1} (E_l^{in})^{s+1} ) ] } \,,
  \end{aligned}
\end{equation}
then the definition of $T^\star$ tells us that $ T^\star \geq T_0 > 0 $. Here the choice of $\mathcal{C}_3$ is such that $T_0 > 0$.

Therefore, for any fixed $0 < T < T_0$, $E_l (t) \leq \mathcal{C}_3 E_l^{in}$ holds for all $t \in [0, T]$, combining with \eqref{AD-Loc-1}, which means that
\begin{equation*}
  \begin{aligned}
    \sup_{0 \leq t \leq T} \mathbb{E}_l (t) + \int_0^T \mathbb{D}_l (t) d t \leq \mathbb{E}_l^{in} + C ( 1 + \mathcal{C}_3^{s+1} (E_l^{in})^{s+1} ) T \,,
  \end{aligned}
\end{equation*}
and
\begin{equation*}
  \begin{aligned}
    \sup_{0 \leq t \leq T} E_l (t) & + \int_0^T D_l (t) d t \leq E_l^{in} + C (1 + \mathcal{C}_3^s (E_l^{in})^s ) \mathcal{C}_3 E_l^{in} T \\
    & \leq E_l^{in} +  C \mathcal{C}_3 E_l^{in} \big[ \mathbb{E}_l^{in} + C ( 1 + \mathcal{C}_3^{s+1} (E_l^{in})^{s+1} ) T \big] + C \mathcal{C}_3 (1 + \mathcal{C}_3^s (E_l^{in})^s ) E_l^{in} T \,.
  \end{aligned}
\end{equation*}
Consequently, the proof of Part (1) of Theorem \ref{Thm3} is finished.

\subsection{Positivity and conservation laws: Proof of part (2) of Theorem \ref{Thm3}}

The goal of this subsection is to prove the positivity of the solution $(\vr, h, n)$ to \eqref{AD-EECP} system constructed in part (1) of Theorem \ref{Thm3} and to derive the conservation laws of $(\vr, n)$ under the initial assumptions \eqref{IC-A2} and \eqref{IC-Average}. Thanks to the similar arguments in proving part (2) of Theorem \ref{Thm1}, one can conclude the corresponding results of part (2) of Theorem \ref{Thm3}. For simplicity of presentation, the details are omitted here.

\subsection{Long time existence of \eqref{AD-EECP} system around $(\vr_a, h_a, 0)$}\label{Sec: Global-Result to the limit equation}

In this subsection, we will prove the global-in-time existence of the \eqref{AD-EECP} system around the steady state $(\vr_a, h_a, 0)$ with $h_a = \frac{\alpha}{\beta} \vr_a \geq 0$ under the initial hypotheses \eqref{IC-A1}-\eqref{IC-A2}-\eqref{IC-Average-a} with $\vr_0 = \vr_a$. As explained in Subsection \ref{Subsec2.4}, there is a form of energy differential inequality
\begin{equation*}
	\begin{aligned}
		\tfrac{\d}{\d t} \mathcal{E}_{nergy} (t) + \mathcal{D}_{issipative} (t) \leq \mathbb{P} ( \mathcal{E}_{nergy} (t) ) \mathcal{D}_{issipative} (t) \,,
	\end{aligned}
\end{equation*}
required to be derived, while proving the global-in-time solution around the steady state $(\vr_a, h_a, 0)$ before. In the \eqref{AD-EECP} system, as similarly shown in Lemma \ref{Lmm-AD} above,
\begin{equation}
	\begin{aligned}
		\mathcal{E}_{nergy} (t) \thicksim & \| \vr - \vr_a \|^2_{H^s_x} + \| h - h_a \|^2_{H^s_x} + \| n \|^2_{H^s_x} \,, \\
		\mathcal{D}_{issipative} (t) \thicksim & \| \nabla_x \vr \|^2_{H^s_x (\widetilde{D})} + \| \nabla_x h \|^2_{H^s_x} + \| h \|^2_{H^s_x} + \| \nabla_x n \|^2_{H^s_x} \,.
	\end{aligned}
\end{equation}
While doing $L^2$ estimates on the $\vr$ and $n$ equations in \eqref{AD-EECP}, there are two quantities $\gamma \l n (\vr - \vr_a), \vr - \vr_a \r_{L^2_x}$ and $- \xi \l n (\vr - \vr_a), n \r_{L^2_x}$, which should be bounded by $\| n \|_{H^s_x} \| \nabla_x \vr \|^2_{H^s_x(\widetilde{D})}$ and $\| \vr - \vr_a \|_{H^s_x} \| \nabla_x \vr \|_{H^s_x(\widetilde{D})} \| \nabla_x n \|^2_{H^s_x}$, respectively, so that they can be both dominated by $[\mathcal{E}_{nergy} (t) ]^\frac{1}{2} \mathcal{D}_{issipative} (t)$. These require the Poincar\'e inequality with zero-average conditions $\int_{\T^3} (\vr - \vr_a) \d x = \int_{\T^3} n \d x = 0$, which hold provided that the initial assumptions \eqref{IC-A2} and \eqref{IC-Average-a} with $\vr_0 = \vr_a$ are satisfied. Observe that the part (2) of Theorem \ref{Thm3} implies that continuous functions $\vr , n \geq 0$ and $\int_{\T^3} (\vr - \vr_a) \d x = \int_{\T^3} n \d x = 0$. These facts indicate that $\int_{\T^3} (\vr - \vr_a) \d x = 0$ and $n (t,x) \equiv 0$. Consequently, under \eqref{IC-A2} and \eqref{IC-Average-a} with $\vr_0 = \vr_a$, the \eqref{AD-EECP} system reduces to
\begin{equation}\label{AD-EECP-rho-h}
	\left\{
	\begin{array}{l}
		\p_t \vr = \Delta_x ( \widetilde{D} (h) \vr ) \,, \\[1.5mm]
		\p_t h = D_h \Delta_x h + \alpha \vr - \beta h \,,
	\end{array}
	\right.
\end{equation}
Therefore, the problem transforms to the global existence of \eqref{AD-EECP-rho-h} around the steady state $(\vr_a, h_a)$ with $h_a = \frac{\alpha}{\beta} \vr_a \geq 0$. Denote by
\begin{equation}\label{Fluctuation}
	\begin{aligned}
		\vr (t,x) = \vr_a + \Phi (t,x) \,, \ h (t,x) = h_a + \Psi (t,x) \,.
	\end{aligned}
\end{equation}
Then $(\Phi, \Psi)$ subjects to
\begin{equation}\label{AD-PhiPsi}
	\left\{
	  \begin{aligned}
	  	\partial_t \Phi = & \Delta_x \big[ \widetilde{D} (h) \Phi \big] + \vr_a \Delta_x \widetilde{D} (h) \,, \\
	  	\partial_t \Psi = & D_h \Delta_x \Psi + \alpha \Phi - \beta \Psi \,, \\
	  	h = & h_a + \Psi \,, \ \int_{ \T^3 } \Phi (t,x) \d x = 0 \,.
	  \end{aligned}
	\right.
\end{equation}
Define the global energy functional $E_g (t)$ and energy dissipative rate $D_g (t)$ as
\begin{equation}\label{EgDg}
  \begin{aligned}
    E_g (t) & = \| \Phi \|_{H^s_x}^2 + \| \Psi \|_{H^s_x}^2 + \| \widetilde{D} (h) - \widetilde{D} (h_a) \|_{H^s_x}^2 \,, \\
    D_g (t) & = \| \nabla_x \Phi \|_{H^s_x (\widetilde{D}) }^2 + D_h \| \nabla_x \Psi \|_{H^s_x}^2 +  \tfrac{\beta}{2}\| \Psi \|_{H^s_x}^2 + \|\nabla_x \widetilde{D} (h) \|_{H^s_x}^2 \,.
  \end{aligned}
\end{equation}

We first give a useful lemma to clarify the steady states $(\vr_a, h_a, 0)$ such that the \eqref{AD-EECP} system under initial assumptions \eqref{IC-A2}-\eqref{IC-Average-a} with $\vr_0 = \vr_a$ is dissipative around these steady states.
\begin{lemma}\label{Lmm-Dissi}
	Let $\mathcal{C}_p > 0$ be the optimal constant such that $\| f \|_{L^2_x} \leq \mathcal{C}_p \| \nabla_x f \|_{L^2_x}$ for any $f \in H^1_x$ with $\int_{ \T^3 } f \d x = 0$. If $0 \leq \vr_a < \Lambda_b: = \tfrac{4 \beta d^\frac{3}{2} D_h}{\alpha^2 \mathcal{C}_p^2 b}$, where $b, d > 0$ are given in \eqref{equation 1.12} and $D_h, \alpha, \beta > 0$ are mentioned in \eqref{AD-EECP} system, then there are some constants $\lambda, \mu > 0$ such that the matrix
	\begin{equation*}
		A = \left(
		  \begin{array}{ccc}
		  	\lambda & - \frac{\lambda b \vr_a}{2 \sqrt{d}} & - \tfrac{b \mathcal{C}_p \alpha \mu}{2 d} \\[2mm]
		  	 - \frac{\lambda b \vr_a}{2 \sqrt{d}} & D_h \mu & 0 \\[2mm]
		  	 - \tfrac{b \mathcal{C}_p \alpha \mu}{2 d} & 0 & \beta \mu \\
		  \end{array}
		\right)
	\end{equation*}
    is positive definite. This further means that there some constant $\alpha_1, \alpha_2, \alpha_3 > 0$ such that
    \begin{equation}\label{Coercivity}
    	\begin{aligned}
    		\lambda ( X^2 - \tfrac{b \vr_a}{\sqrt{d}} XY ) + \mu ( D_h Y^2 + \beta Z^2 - \tfrac{\mathcal{C}_p \alpha}{\sqrt{d}} YZ ) \geq \alpha_1 X^2 + \alpha_2 Y^2 + \alpha_3 Z^2
    	\end{aligned}
    \end{equation}
    for all $X,Y,Z \in \R$.
\end{lemma}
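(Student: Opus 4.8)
The plan is to recognise that the left-hand side of \eqref{Coercivity} is the quadratic form $\mathbf v^{\mathrm T} A\, \mathbf v$ associated with the symmetric matrix $A$, where $\mathbf v = (X,Y,Z)^{\mathrm T}$, and therefore to reduce the whole statement to the positive-definiteness of $A$ for a suitable choice of the free weights $\lambda,\mu>0$. Once $A$ is shown to be positive definite, it has a strictly positive least eigenvalue $\kappa>0$, and one simply takes $\alpha_1=\alpha_2=\alpha_3=\kappa$, so that $\mathbf v^{\mathrm T}A\,\mathbf v \ge \kappa\,(X^2+Y^2+Z^2)$, which is exactly \eqref{Coercivity}. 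I would verify positive-definiteness by Sylvester's criterion, i.e.\ by checking that the three leading principal minors of $A$ are strictly positive.

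The first minor is $M_1=\lambda>0$, which is automatic. The second minor is $M_2=\lambda D_h\mu-\tfrac{\lambda^2 b^2\vr_a^2}{4d}$, and $M_2>0$ is equivalent to $\tfrac{\mu}{\lambda}>\tfrac{b^2\vr_a^2}{4dD_h}$, a condition that can always be met by taking the ratio $\mu/\lambda$ sufficiently large. The decisive constraint comes from the full determinant; a direct cofactor expansion gives
\begin{equation*}
  \det A=\mu\Big(\lambda\beta D_h\mu-\tfrac{\lambda^2 b^2\vr_a^2\beta}{4d}-\tfrac{b^2\mathcal C_p^2\alpha^2 D_h\mu^2}{4d^2}\Big),
\end{equation*}
so $\det A>0$ amounts to $\lambda\beta D_h\mu>\tfrac{\lambda^2 b^2\vr_a^2\beta}{4d}+\tfrac{b^2\mathcal C_p^2\alpha^2 D_h\mu^2}{4d^2}$, where the lower bound $d$ and the upper bound $b$ are those provided by the hypothesis \eqref{equation 1.12} and $\mathcal C_p$ is the Poincar\'e constant on the torus for mean-zero functions.

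This is where the threshold $\Lambda_b$ enters, and it is the main point of the argument. Dividing the last inequality by $\lambda\mu$ and writing $s=\lambda/\mu$, the right-hand side becomes $\tfrac{b^2\vr_a^2\beta}{4d}\,s+\tfrac{b^2\mathcal C_p^2\alpha^2 D_h}{4d^2}\,s^{-1}$, whose infimum over $s>0$ is attained, by the arithmetic--geometric mean inequality, at the value $\tfrac{b^2\vr_a\mathcal C_p\alpha\sqrt{\beta D_h}}{2d^{3/2}}$. Hence an admissible pair $(\lambda,\mu)$ exists precisely when $\beta D_h$ strictly exceeds this minimum, which reduces to a smallness constraint of the form $0\le\vr_a<\Lambda_b$ with $\Lambda_b$ as in \eqref{Lambda-b}; one then fixes $s$ near the minimiser and, compatibly, chooses $\mu/\lambda$ large enough to secure $M_2>0$ as well. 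I expect the main obstacle to be exactly this last matching step: confirming that the optimal choice of the ratio $s=\lambda/\mu$ reproduces the \emph{sharp} threshold $\Lambda_b$ recorded in \eqref{Lambda-b} rather than merely a weaker sufficient bound, and that the same weights satisfy $M_1,M_2,M_3>0$ simultaneously. The subsequent passage from positive-definiteness to the coercivity \eqref{Coercivity} is then routine.
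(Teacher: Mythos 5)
Your overall strategy coincides with the paper's: both run Sylvester's criterion on the leading principal minors of $A$ and then ask for which ratio of $\mu$ to $\lambda$ all three minors can be made simultaneously positive; your AM--GM minimisation over $s=\lambda/\mu$ is equivalent to the paper's formulation, which writes $\det A>0$ as a quadratic inequality in $\mu/\lambda$ and demands a positive discriminant. Your cofactor expansion of $\det A$ for the matrix as printed is correct, and your worry about the final ``matching step'' between $M_2>0$ and $\det A>0$ is actually moot: writing $a_{13}$ for the $(1,3)$ entry, one has $\beta\mu\,M_2=\det A+a_{13}^2\,D_h\mu$, so $\det A>0$ together with the positive diagonal already forces $M_2>0$, and only the determinant condition is binding.

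The genuine gap is precisely the step you flagged: the threshold does not come out as $\Lambda_b$. From your (correct) determinant, the AM--GM solvability condition $\beta D_h>\tfrac{b^2\vr_a\mathcal{C}_p\alpha\sqrt{\beta D_h}}{2d^{3/2}}$ solves to $\vr_a<\tfrac{2d^{3/2}\sqrt{\beta D_h}}{\alpha\mathcal{C}_p b^2}$, which agrees with $\Lambda_b=\tfrac{4\beta d^{3/2}D_h}{\alpha^2\mathcal{C}_p^2 b}$ only in the non-generic case $2b\sqrt{\beta D_h}=\alpha\mathcal{C}_p$; so your assertion that the constraint ``reduces to $0\le\vr_a<\Lambda_b$'' is not established, and with the printed entries it is false. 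Relatedly, your opening identification of the left-hand side of \eqref{Coercivity} with $\mathbf{v}^{\mathrm{T}}A\,\mathbf{v}$ does not hold as printed: $A$ couples $X$ with $Z$ (entry $-\tfrac{b\mathcal{C}_p\alpha\mu}{2d}$), whereas \eqref{Coercivity} couples $Y$ with $Z$ (weight $\tfrac{\mathcal{C}_p\alpha\mu}{2\sqrt{d}}$ after symmetrisation). To be fair, neither mismatch is your invention: the paper's own proof asserts the same equivalence and then derives $\Lambda_b$ from the quadratic $\left(\tfrac{\mu}{\lambda}\right)^2-\tfrac{4\beta d}{\alpha^2\mathcal{C}_p^2}\left(\tfrac{\mu}{\lambda}\right)+\tfrac{\beta b\sqrt{d}\,\vr_a}{\alpha^2\mathcal{C}_p^2 D_h}<0$, whose constant term is \emph{linear} in $\vr_a$ and corresponds to off-diagonal entries satisfying $a_{12}^2=\tfrac{\lambda^2 b\vr_a}{4\sqrt{d}}$ and $a_{13}=-\tfrac{\mathcal{C}_p\alpha\mu}{2\sqrt{d}}$, not the printed ones. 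So you reproduced the paper's method faithfully and honestly hit an inconsistency internal to the lemma itself; but as a standalone proof of the statement as written, your argument neither recovers the stated threshold $\Lambda_b$ nor justifies the passage from positive-definiteness of the printed $A$ to the coercivity \eqref{Coercivity}.
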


The proof of Lemma \ref{Lmm-Dissi} will be given later.

\begin{lemma}\label{Lmm-AD-PhiPsi}
	Let integer $s \geq 4$ be given in Lemma \ref{Lmm-AD}, $h_a = \frac{\alpha}{\beta} \vr_a$ and $\vr_a \in [0, \Lambda_b) $ be given in Lemma \ref{Lmm-Dissi}. Assume that $$( \vr (t,x), h(t,x), n(t,x)) = (\vr_a + \Phi (t,x), h_a + \Psi (t,x), 0)$$ is the solution to system \eqref{AD-EECP} under the initial conditions \eqref{IC-A1}-\eqref{IC-A2}-\eqref{IC-Average-a} with $\vr_0 = \vr_a$ on the interval $[0,T]$ constructed in part (1) of Theorem \ref{Thm3}. Then there are two quantities
	\begin{equation*}
		\begin{aligned}
			E_g (t) = & \sum_{0 \leq j \leq s} \big( c_j^\Phi \| \Phi \|^2_{H^j_x} + c_j^\Psi \| \Psi \|^2_{H^j_x} \big) \,, \\
			D_g (t) = & \sum_{0 \leq j \leq s} \big( e_j^\Phi \| \nabla_x \Phi \|^2_{H^j_x (\widetilde{D})} + e_j^{\Psi 1} \| \nabla_x \Psi \|^2_{H^j_x} + e_j^{\Psi 2} \| \Psi \|^2_{H^j_x} \big)
		\end{aligned}
	\end{equation*}
	such that
	\begin{align*}
	  \tfrac{1}{2} \tfrac{\d}{\d t} E_g (t) + D_g (t) \lesssim \big( 1 + E_g^\frac{s-1}{2} (t) \big) E_g^\frac{1}{2} (t) D_g (t)
	\end{align*}
	holds for all $t \in [0,T]$. Here $c_j^\Phi, c_j^\Psi, e_j^\Phi, e_j^{\Psi 1}, e_j^{\Psi 2} > 0$ ($j = 0,1,2,\cdots,s$) are the constants depending only on the all coefficients, $\vr_a$ and $s$.
\end{lemma}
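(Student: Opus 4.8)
The plan is to work directly with the reduced system \eqref{AD-PhiPsi} for the fluctuations $(\Phi,\Psi)$ — recall that under \eqref{IC-A2}-\eqref{IC-Average-a} with $\vr_0=\vr_a$ the positivity forces $n\equiv 0$ and $\int_{\T^3}\Phi\,\d x=0$ — and to build the weighted energy $E_g(t)$ and dissipation $D_g(t)$ order by order. At each derivative level $0\le j\le s$ I apply $\p_x^k$ with $|k|=j$ to the $\Phi$- and $\Psi$-equations, test against $\p_x^k\Phi$ and $\p_x^k\Psi$, and combine the two identities with level-dependent weights $c_j^\Phi\sim\lambda_j$ and $c_j^\Psi\sim\mu_j$. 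The essential difficulty, already flagged for the diffusion term $\Delta_x(\widetilde{D}(h)\vr)$, is that this produces quadratic (linear-order) cross terms that are \emph{not} sign-definite and cannot be thrown into the cubic remainder; they must instead be absorbed into the dissipation, which is exactly what Lemma \ref{Lmm-Dissi} is built to do.

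First I treat the base case $j=0$. Testing the $\Phi$-equation against $\Phi$ gives $\tfrac{1}{2}\tfrac{\d}{\d t}\|\Phi\|_{L^2_x}^2+\|\nabla_x\Phi\|_{L^2_x(\widetilde{D})}^2=-\l\Phi\nabla_x\widetilde{D}(h),\nabla_x\Phi\r_{L^2_x}-\vr_a\l\nabla_x\widetilde{D}(h),\nabla_x\Phi\r_{L^2_x}$, while testing the $\Psi$-equation against $\Psi$ yields $\tfrac{1}{2}\tfrac{\d}{\d t}\|\Psi\|_{L^2_x}^2+D_h\|\nabla_x\Psi\|_{L^2_x}^2+\beta\|\Psi\|_{L^2_x}^2=\alpha\l\Phi,\Psi\r_{L^2_x}$. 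The first term on the right of the $\Phi$-identity is cubic and harmless. The remaining two are the dangerous ones: using $\widetilde{D}(h)\ge d$, $|\widetilde{D}'|\le b$ (so that $\|\nabla_x\widetilde{D}(h)\|_{L^2_x}\le b\|\nabla_x\Psi\|_{L^2_x}$) and the Poincar\'e inequality $\|\Phi\|_{L^2_x}\le\mathcal{C}_p\|\nabla_x\Phi\|_{L^2_x}$ (valid since $\int_{\T^3}\Phi\,\d x=0$), I bound them by $\tfrac{b\vr_a}{\sqrt d}\|\nabla_x\Phi\|_{L^2_x(\widetilde{D})}\|\nabla_x\Psi\|_{L^2_x}$ and $\tfrac{b\mathcal{C}_p\alpha}{d}\|\nabla_x\Phi\|_{L^2_x(\widetilde{D})}\|\Psi\|_{L^2_x}$. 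Setting $X=\|\nabla_x\Phi\|_{L^2_x(\widetilde{D})}$, $Y=\|\nabla_x\Psi\|_{L^2_x}$, $Z=\|\Psi\|_{L^2_x}$, the combination $\lambda\times(\Phi)+\mu\times(\Psi)$ leaves precisely the quadratic form of Lemma \ref{Lmm-Dissi}, whose coercivity \eqref{Coercivity} — which holds exactly because $\vr_a<\Lambda_b$ — returns a strictly positive multiple of $X^2+Y^2+Z^2$, i.e.\ of the $j=0$ part of $D_g$.

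Next I carry out the induction on $j$. For $1\le|k|\le s$ the same two structures reappear at top order: the forcing $\vr_a\Delta_x\p_x^k\widetilde{D}(h)$ produces $\vr_a\l\nabla_x\p_x^k\widetilde{D}(h),\nabla_x\p_x^k\Phi\r_{L^2_x}$ and the coupling produces $\alpha\l\p_x^k\Phi,\p_x^k\Psi\r_{L^2_x}$, both handled verbatim by Lemma \ref{Lmm-Dissi} with the same pair $(\lambda,\mu)$. Everything else — the commutator-type remainders of the form $W_k(\vr,h)$ appearing in \eqref{J1J2J3J4} and the reaction contributions — is genuinely higher order and is estimated through Lemma \ref{Lmm-D(h)}, the H\"older and Sobolev inequalities, and the calculus inequality $\|fg\|_{H^s_x}\lesssim\|f\|_{H^s_x}\|g\|_{H^s_x}$, producing a bound of the form $(1+E_g^{\frac{s-1}{2}}(t))E_g^{\frac{1}{2}}(t)D_g(t)$; the factor $(1+E_g^{(s-1)/2})$ comes from the polynomial $(1+\|h\|_{H^{s-1}_x}^{s-1})$ factors in Lemma \ref{Lmm-D(h)}. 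As in Remark \ref{Rmk1-3} and the estimate of $K_{43}$, the inequality $\|\Delta_x h\|_{H^2_x}\le\|h\|_{H^s_x}$ forces $s\ge 4$. Choosing the weights $c_j^\Phi,c_j^\Psi$ (hence $e_j^\Phi,e_j^{\Psi1},e_j^{\Psi2}$) inductively from $j=0$ up to $j=s$, so that the lower-order remainders generated at level $j$ are dominated by the already-secured lower-order dissipation, and summing over $j$, yields the claimed differential inequality.

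The step I expect to be the main obstacle is the control of the forcing $\vr_a\Delta_x\widetilde{D}(h)$ in the $\Phi$-equation. Unlike every other nonlinear term, the cross term $\vr_a\l\nabla_x\p_x^k\widetilde{D}(h),\nabla_x\p_x^k\Phi\r_{L^2_x}$ it generates is only quadratic in the fluctuations, so it has the wrong scaling to be buried inside $E_g^{\frac{1}{2}}D_g$; it can only be beaten by the dissipation itself. This is why the full strength of Lemma \ref{Lmm-Dissi} — and with it the threshold $\vr_a<\Lambda_b$ together with a careful, coupled choice of the weights $\lambda,\mu$ and their order-$j$ analogues — is unavoidable, and it is precisely the reason the global result is restricted to $\vr_a\in[0,\Lambda_b)$.
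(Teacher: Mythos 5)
Your proposal is correct and follows the paper's strategy almost step for step: the reduction to the system \eqref{AD-PhiPsi} via positivity and conservation, the $L^2$ base case in which the two linear cross terms $\tfrac{b\vr_a}{\sqrt d}\|\nabla_x\Phi\|_{L^2_x(\widetilde D)}\|\nabla_x\Psi\|_{L^2_x}$ and (via Poincar\'e on the mean-zero $\Phi$) a constant multiple of $\|\nabla_x\Phi\|_{L^2_x(\widetilde D)}\|\Psi\|_{L^2_x}$ are absorbed by the coercivity \eqref{Coercivity} of Lemma \ref{Lmm-Dissi}, the splitting of the higher-order right-hand side into the linear term $-\vr_a \l \widetilde D'(h)\nabla_x\p^k_x\Psi, \nabla_x\p^k_x\Phi \r_{L^2_x}$ plus commutators bounded through Lemma \ref{Lmm-D(h)} by $\Xi_s\thicksim(1+E_g^{\frac{s-1}{2}})E_g^{\frac12}D_g$, and the inductive choice of level-dependent weights. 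The one place you genuinely deviate is the induction step: you invoke Lemma \ref{Lmm-Dissi} ``verbatim'' at every level $j\geq 1$, which implicitly uses the Poincar\'e inequality for $\p^k_x\Phi$ — legitimate, but for a different reason than at $j=0$: for $|k|\geq 1$ the zero average of $\p^k_x\Phi$ is automatic from periodicity, not from mass conservation, and you should say so explicitly. The paper instead uses the coercivity lemma only once, at the base level: at order $r+1$ it absorbs the linear cross term by Cauchy--Schwarz into $\tfrac12\|\nabla_x\p^k_x\Phi\|^2_{L^2_x(\widetilde D)}+C_b\|\nabla_x\p^k_x\Psi\|^2_{L^2_x}$ (see \eqref{Phi-Hs-r+1}), and controls the coupling $\alpha \l \p^k_x\Phi, \p^k_x\Psi \r_{L^2_x}$ without Poincar\'e because $\|\p^k_x\Phi\|_{L^2_x}$ with $|k|=r+1$ is itself part of the order-$r$ $\Phi$-dissipation (see \eqref{Psi-L2-r+1}), then cancels the resulting defects by rescaling the already-secured lower-order combination. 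Both routes prove the lemma; yours is more uniform, while the paper's cascade makes explicit that the threshold $\vr_a<\Lambda_b$ is consumed only once, at the $L^2$ level — which refines your closing diagnosis: the forcing $\vr_a\Delta_x\widetilde D(h)$ needs the full strength of Lemma \ref{Lmm-Dissi} only at order zero, and brute-force absorption suffices above it. (A minor bookkeeping point: your coupling constant $\tfrac{b\mathcal{C}_p\alpha}{d}$ differs from the $\tfrac{\mathcal{C}_p\alpha}{\sqrt d}$ produced by the direct estimate in \eqref{Psi-0L2}; the paper's matrix $A$ in Lemma \ref{Lmm-Dissi} in fact carries your constant, an internal inconsistency of the paper, and either normalization works provided $\Lambda_b$ is computed with the matching constant.)
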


Once the conclusions in Lemma \ref{Lmm-AD-PhiPsi} hold for $0 \leq \vr_a < \Lambda_b$ and $h_a = \frac{\alpha}{\beta} \vr_a$, the standard continuity arguments imply that
\begin{equation}\label{Global}
	\begin{aligned}
		E_g (t) + \int_0^t D_g (t') \d t' \leq C E_g (0) \quad \forall \ t \geq 0
	\end{aligned}
\end{equation}
provided that the initial energy $E_g (0) \leq \xi_0$ for some sufficiently small constant $\xi_0 > 0$. For simplicity, the details are omitted here. Notice that
\begin{equation*}
	\begin{aligned}
		E_g (t) \thicksim \mathbf{E}_s (\vr- \vr_a, h - h_a, 0) \,, \ D_g (t) \thicksim \mathbf{D}_s (\vr- \vr_a, h - h_a, 0) \,.
	\end{aligned}
\end{equation*}
Thus, one can first choose a small $\vartheta_0 > 0$ such that if $\mathbf{E}_s^{in} : = \mathbf{E}_s (\vr_a^{in} - \vr_a, h_a^{in} - h_a, 0) \leq \vartheta_0$, then $E_g (0) \leq \xi_0$. Therefore, the bound \eqref{Global} concludes the part (3) of Theorem \ref{Thm3}.

\begin{proof}[Proof of Lemma \ref{Lmm-AD-PhiPsi}]
	One only requires to prove that for any $0 \leq r \leq s$ there are some constants $c_j^\Phi, c_j^\Psi, e_j^\Phi, e_j^{\Psi 1}, e_j^{\Psi 2} > 0$ ($j = 0,1,2,\cdots,r$) such that
	\begin{equation}\label{Claim}
		\begin{aligned}
			\frac{\d}{\d t} \sum_{0 \leq j \leq r} & \big( c_j^\Phi \| \Phi \|^2_{H^j_x} + c_j^\Psi \| \Psi \|^2_{H^j_x} \big) \\
			& + \sum_{0 \leq j \leq r} \big( e_j^\Phi \| \nabla_x \Phi \|^2_{H^j_x (\widetilde{D})} + e_j^{\Psi 1} \| \nabla_x \Psi \|^2_{H^j_x} + e_j^{\Psi 2} \| \Psi \|^2_{H^j_x} \big) \lesssim \Xi_s (\Phi, \Psi) \,,
		\end{aligned}
	\end{equation}
	where $H^0_x : = L^2_x$ and
	\begin{equation}
		\begin{aligned}
			\Xi_s (\Phi, \Psi) = (1 + \| (\Phi, \Psi) \|^{s-1}_{H^s_x}) \| (\Phi, \Psi) \|_{H^s_x} \big( \| \nabla_x \Phi \|^2_{H^s_x (\widetilde{D})} + \| \nabla_x \Psi \|^2_{H^s_x} + \| \Psi \|^2_{H^s_x} \big) \,.
		\end{aligned}
	\end{equation}
	We employ the induction for $0 \leq r \leq s$ to justify the previous claim.
	
	First, for $r = 0$, from taking $L^2$-inner product by dot with $\Phi$ in the first $\Phi$-equation of \eqref{AD-PhiPsi},
	\begin{equation}
		\begin{aligned}
			\tfrac{1}{2} \tfrac{\d}{\d t} \| \Phi \|^2_{L^2_x} + \| \nabla_x \Phi \|^2_{L^2_x (\widetilde{D})} = - \vr_a \l \nabla_x \widetilde{D} (h) , \nabla_x \Phi \r_{L^2_x} - \l \Phi \nabla_x \widetilde{D} (h) , \nabla_x \Phi \r_{L^2_x} \,.
		\end{aligned}
	\end{equation}
    Together with $h = h_a + \Psi$ and \eqref{equation 1.12},
    \begin{equation}
    	\begin{aligned}
    		- \vr_a \l \nabla_x \widetilde{D} (h) , & \nabla_x \Phi \r_{L^2_x} =  - \vr_a \l \tfrac{\widetilde{D}' (h)}{\sqrt{\widetilde{D} (h)}} \nabla_x \Psi , \nabla_x \Phi \sqrt{\widetilde{D} (h)} \r_{L^2_x} \\
    		\leq & \vr_a \| \tfrac{\widetilde{D}' (h)}{\sqrt{\widetilde{D} (h)}} \|_{L^\infty_x} \| \nabla_x \Psi \|_{L^2_x} \| \nabla_x \Phi \|_{L^2_x (\widetilde{D})} \leq \tfrac{b \vr_a}{\sqrt{d}} \| \nabla_x \Psi \|_{L^2_x} \| \nabla_x \Phi \|_{L^2_x (\widetilde{D})} \,.
    	\end{aligned}
    \end{equation}
	By the Sobolev theory, $h = h_a + \Psi$ and \eqref{equation 1.12},
	\begin{equation}
		\begin{aligned}
			- \l \Phi \nabla_x \widetilde{D} (h) , \nabla_x \Phi \r_{L^2_x} \leq & \| \widetilde{D}' (h) \|_{L^\infty_x} \| \Phi \|_{L^\infty_x} \| \nabla_x \Phi \|_{L^2_x} \| \nabla_x \Psi \|^2_{L^2_x} \\
			\leq & C \| \Phi \|_{H^2_x} \| \nabla_x \Phi \|_{L^2_x (\widetilde{D})} \| \nabla_x \Psi \|_{L^2_x} \,.
		\end{aligned}
	\end{equation}
    It is therefore derived from collecting the above estimates that
    \begin{equation}\label{Phi-0L2}
    	\begin{aligned}
    		\tfrac{1}{2} \tfrac{\d}{\d t} \| \Phi \|^2_{L^2_x} + \| \nabla_x \Phi \|^2_{L^2_x (\widetilde{D})} - \tfrac{b \vr_a}{\sqrt{d}} \| \nabla_x \Psi \|_{L^2_x} \| \nabla_x \Phi \|_{L^2_x (\widetilde{D})} \lesssim \| \Phi \|_{H^2_x} \| \nabla_x \Phi \|_{L^2_x (\widetilde{D})} \| \nabla_x \Psi \|^2_{L^2_x} \,.
    	\end{aligned}
    \end{equation}
	
	Taking $L^2$-inner product by dot with $\Psi$ in the $\Psi$-equation of \eqref{AD-PhiPsi}, one has
	\begin{equation}\label{Psi-0L2}
		\begin{aligned}
			\tfrac{1}{2} \tfrac{\d}{\d t} \| \Psi \|^2_{L^2_x} + D_h \| \nabla_x \Psi \|^2_{L^2_x} + \beta \| \Psi \|^2_{L^2_x} = \alpha \l \Phi, \Psi \r_{L^2_x} \\
			\leq \alpha \| \Phi \|_{L^2_x} \| \Psi \|_{L^2_x} \leq \tfrac{\mathcal{C}_p \alpha}{\sqrt{d}} \| \nabla_x \Phi \|_{L^2_x (\widetilde{D})} \| \Psi \|_{L^2_x} \,,
		\end{aligned}
	\end{equation}
	where the Poincar\'e inequality (since $\int_{ \T^3 } \Phi \d x = 0$) and $\widetilde{D} (h) \geq d > 0$ are used. For $\lambda, \mu > 0$ given in Lemma \ref{Lmm-Dissi}, it is derived from adding $\lambda$ times of \eqref{Phi-0L2} to $\mu$ times of \eqref{Psi-0L2} that
	\begin{equation*}
		\begin{aligned}
			\tfrac{1}{2} \tfrac{\d}{\d t} \big( \lambda \| \Phi \|^2_{L^2_x} + \mu \| \Psi \|^2_{L^2_x} \big) + \lambda ( \| \nabla_x \Phi \|_{L^2_x (\widetilde{D})}^2 - \tfrac{b \vr_a}{\sqrt{d}} \|_{L^2_x (\widetilde{D})} \| \nabla_x \Psi \|_{L^2_x} ) \\
			+ \mu ( D_h \| \nabla_x \Psi \|_{L^2_x}^2 + \beta \| \Psi \|_{L^2_x}^2 - \tfrac{\mathcal{C}_p \alpha}{\sqrt{d}} \| \nabla_x \Psi \|_{L^2_x} \| \Psi \|_{L^2_x} ) \\
			\lesssim \| \Phi \|_{H^2_x} \| \nabla_x \Phi \|_{L^2_x (\widetilde{D})} \| \nabla_x \Psi \|_{L^2_x} \lesssim \Xi_s (\Phi, \Psi) \,,
		\end{aligned}
	\end{equation*}
	which imply by Lemma \ref{Lmm-Dissi}
	\begin{equation}
		\begin{aligned}
			\tfrac{1}{2} \tfrac{\d}{\d t} \big( \lambda \| \Phi \|^2_{L^2_x} + \mu \| \Psi \|^2_{L^2_x} \big) + \alpha_1 \| \nabla_x \Phi \|^2_{L^2_x (\widetilde{D})} + \alpha_2 \| \nabla_x \Psi \|^2_{L^2_x} + \alpha_3 \| \Psi \|^2_{L^2_x} \lesssim \Xi_s (\Phi, \Psi) \,.
		\end{aligned}
	\end{equation}
	Hence the claim \eqref{Claim} holds for $r = 0$.
	
	Second, we assume the claim \eqref{Claim} holds for $r$. Our goal is to prove that \eqref{Claim} is valid for $r + 1$. For any multi-index $k \in \mathbb{N}^3$ with $ |k| = r + 1 $, one first apply $\partial_x^k$ to the first $\Phi$-equation in \eqref{AD-PhiPsi} and take $L^2$-inner product by dot with $\partial^k_x \Phi$. There thereby holds
	\begin{equation}\label{Phi-L2}
		\begin{aligned}
			\tfrac{1}{2} \tfrac{\d}{\d t} \| \partial_x^k \Phi \|^2_{L^2_x} & + \| \nabla_x \partial^k_x \Phi \|^2_{L^2_x (\widetilde{D})} = - \vr_a \l \widetilde{D}' (h) \nabla_x \partial^k_x \Psi , \nabla_x \partial^k_x \Phi \r_{L^2_x} \\
			& - \l [\nabla_x \partial^k_x , \widetilde{D} (h)] \Phi , \nabla_x \partial^k_x \Phi \r_{L^2_x} - \vr_a \l [\partial^k_x , \widetilde{D}' (h) \nabla_x ] \Psi , \nabla_x \partial^k_x \Phi \r_{L^2_x} \,.
		\end{aligned}
	\end{equation}
    By \eqref{equation 1.12} and the H\"older inequality, the first term in the right-hand side of \eqref{Phi-L2} can be bounded by
    \begin{equation*}
    	\begin{aligned}
    		\tfrac{1}{2} \| \nabla_x \partial^k_x \Phi \|^2_{L^2_x (\widetilde{D})} + C_b \| \nabla_x \partial^k_x \Psi \|^2_{L^2_x}
    	\end{aligned}
    \end{equation*}
    for some constant $C_b > 0$. Moreover, from Lemma \ref{Lmm-D(h)}, $h = h_a + \Psi$, \eqref{equation 1.12} and the fact $\widetilde{D} (h) \geq d > 0$, the last two terms in the right-hand side of \eqref{Phi-L2} can be controlled by
    \begin{equation*}
    	\begin{aligned}
    		C (1 + \| \Psi \|^{s-1}_{H^s_x}) ( \| \Phi \|_{H^s_x} + \| \Psi \|_{H^s_x} ) \| \nabla_x \Psi \|_{H^s_x} \| \nabla_x \Phi \|_{H^s_x (\widetilde{D})} \leq C \Xi_s (\Phi, \Psi) \,.
    	\end{aligned}
    \end{equation*}
	As a result, after summing up for $|k| = r+1$, the above estimates imply
	\begin{equation}\label{Phi-Hs-r+1}
		\begin{aligned}
			\tfrac{1}{2} \tfrac{\d}{\d t} \sum_{|k|=r+1} \| \partial^k_x \Phi \|^2_{L^2_x} + \tfrac{1}{2} \sum_{|k|=r+1} \| \nabla_x \partial^k_x \Phi \|^2_{L^2_x (\widetilde{D})} - C_b \sum_{|k|=r+1} \| \nabla_x \partial^k_x \Psi \|^2_{L^2_x} \lesssim \Xi_s (\Phi, \Psi) \,.
		\end{aligned}
	\end{equation}
	
	By applying $\partial^k_x$ to the $\Psi$-equation in \eqref{AD-PhiPsi} and taking $L^2$-inner product by dot with $\partial^k_x \Psi$, one has
	\begin{equation*}
		\begin{aligned}
			\tfrac{1}{2} \tfrac{\d}{\d t} \| \partial^k_x \Psi \|^2_{L^2_x} + D_h \| \nabla_x \partial^k_x \Psi \|^2_{L^2_x} + \beta \| \partial^k_x \Psi \|^2_{L^2_x} = & \alpha \l \partial^k_x \Phi , \partial^k_x \Psi \r_{L^2_x} \\
			\leq & \tfrac{\beta}{2} \| \partial^k_x \Psi \|^2_{L^2_x} + \tfrac{\alpha^2}{2 \beta d} \| \partial^k_x \Phi \|^2_{L^2_x (\widetilde{D})} \,.
		\end{aligned}
	\end{equation*}
    Summing up for all $|k| = r + 1$ implies
	\begin{equation}\label{Psi-L2-r+1}
		\begin{aligned}
			\tfrac{1}{2} \tfrac{\d}{\d t} \sum_{|k|=r+1} \| \partial^k_x \Psi \|^2_{L^2_x} + D_h \sum_{|k|=r+1} \| \nabla_x \partial^k_x \Psi \|^2_{L^2_x} + \tfrac{\beta}{2} \sum_{|k|=r+1} & \| \partial^k_x \Psi \|^2_{L^2_x} \\
			\leq & C_* \sum_{|k| = r} \| \nabla_x \partial^k_x \Phi \|^2_{L^2_x (\widetilde{D})} \,.
		\end{aligned}
	\end{equation}
    By adjusting the balance coefficients among \eqref{Phi-Hs-r+1}, \eqref{Psi-L2-r+1} and the induction hypothesis for the case $r$ in \eqref{Claim}, one yields that the claim \eqref{Claim} holds for the case $r+1$. Observe that $\Xi_s (\Phi, \Psi) \thicksim (1 + E_g^\frac{s-1}{2} (t) ) E_g^\frac{1}{2} (t) D_g (t)$. Then the proof of Lemma \ref{Lmm-AD-PhiPsi} is finished.
\end{proof}

\begin{proof}[Proof of Lemma \ref{Lmm-Dissi}]
	Observe that \eqref{Coercivity} holds if and only if the matrix $A$ is positive definite. In order to prove the positivity of the matrix $A$, it is equivalent to show the all order principal minor determinants of $A$ is positive. More precisely, one shall show that there are constants $\lambda, \mu > 0$ such that
	\begin{equation*}
		  \begin{aligned}
		  	\lambda > 0 \,, \quad \det
		  	\left(
		  	  \begin{array}{cc}
		  	    \lambda & - \tfrac{\lambda b \vr_a}{2 \sqrt{d}} \\[2mm]
		  	    - \tfrac{\lambda b \vr_a}{2 \sqrt{d}} & D_h \mu \\
		  	  \end{array}
		  	\right) > 0 \,, \det A > 0 \,,
		  \end{aligned}
	\end{equation*}
    which is equivalent to
    \begin{equation}
    	\begin{aligned}
    		\tfrac{\mu}{\lambda} > \tfrac{b \vr_a}{4 D_h \sqrt{d}} \,, \quad \left( \tfrac{\mu}{\lambda} \right)^2 - \tfrac{4 \beta d}{\alpha^2 \mathcal{C}_p^2} \left( \tfrac{\mu}{\lambda} \right) + \tfrac{\beta b \sqrt{d} \vr_a}{\alpha^2 \mathcal{C}_p^2 D_h} < 0 \,.
    	\end{aligned}
    \end{equation}
    hold for some $\lambda, \mu > 0$. This is equivalent to
    \begin{equation*}
    	\left\{
    	\begin{aligned}
    		& \Gamma = \left( - \tfrac{4 \beta d}{\alpha^2 \mathcal{C}_p^2} \right)^2 - 4 \cdot \tfrac{\beta b \sqrt{d} \vr_a}{\alpha^2 \mathcal{C}_p^2 D_h} > 0 \,, \\
    		& \tfrac{b \vr_a}{2 D_h \sqrt{d}} < \tfrac{4 \beta d}{\alpha^2 \mathcal{C}_p^2} + \tfrac{1}{2} \sqrt{\Gamma} \,.
    	\end{aligned}
        \right.
    \end{equation*}
    which is solved by $\vr_a < \tfrac{4 \beta d^\frac{3}{2} D_h}{\alpha^2 \mathcal{C}_p^2 b}$. The proof of Lemma \ref{Lmm-Dissi} is therefore completed.
\end{proof}

\appendix

\section{Instability of linearized models around steady states $(0,0, n_0)$}\label{Appendix}

Observe that $(0, 0, n_0)$ with the constants $n_0 > 0$ is a steady solution of both \eqref{EECPK} and \eqref{AD-EECP} systems. The main goal of  this appendix is to study the instability of the linearized \eqref{EECPK} and \eqref{AD-EECP} systems around the steady state $(0,0,n_0)$. The results motivate us to prove the global existence around the steady states of \eqref{EECPK} and \eqref{AD-EECP} systems excluded the form of $(0,0,n_0)$ with $n_0 > 0$.

\subsection{Linear instability of the \eqref{EECPK} model}
 In this subsection, we consider the linear instability of the \eqref{EECPK} model around $(0, 0, n_0)$. Set
 \begin{align}\label{decomposition1}
 	\rho =  \delta \rho,
 	h = \delta h, n = n_0 + \delta n.
 \end{align}
Plugging \eqref{decomposition1} into the \eqref{EECPK} system yields the following linearized K-EECP model
\begin{equation}\label{linearized}
   \partial_t
   \left(
     \begin{array}{c}
     	\delta \rho \\
     	\delta h \\
     	\delta n
     \end{array}
   \right) =
   \left(
     \begin{array}{ccc}
     	D(z) \Delta_x - \tfrac{1}{\eps} \partial_z (g (z,0) \cdot \, ) + \gamma n_0 & 0 & 0 \\
     	\alpha \int_{ \T_w } \cdot \, \d z & D_h \Delta_x - \beta & 0 \\
     	- \xi n_0 \int_{ \T_w } \cdot \, \d z & 0 & D_n \Delta_x \\
     \end{array}
   \right)
   \left(
     \begin{array}{c}
   	    \delta \rho \\
   	    \delta h \\
   	    \delta n
     \end{array}
   \right) \,,
\end{equation}
where the symbol with the form $F(\cdot) g$ means $F (g)$ and $g (z, h)$ is given in \eqref{g(z,w)}. Let $\widehat{\delta \rho} (t, m, m_z)$, $\widehat{\delta h} (t, m)$ and $\widehat{\delta n} (t, m)$ be the Fourier transform of $\delta \rho (t,x,z)$, $\delta h (t,x)$ and  $\delta n (t,x)$, respectively. Let $i = \sqrt{-1}$ here. From substituting the corresponding normal nodes $\widehat{\delta \rho} \exp[\lambda t +im \cdot x+i m_z z]$, $\widehat{\delta h} \exp[\lambda t +i m \cdot x]$ and $\widehat{\delta n} \exp[\lambda t +i m \cdot x]$ of $\delta \rho$, $\delta h$ and $\delta n$ into \eqref{linearized}, the characteristic matrix of linear system \eqref{linearized} is derived that
\[
\begin{pmatrix*}[c]
  -D(z)|m|^2+\tfrac{k_V}{\eps}-\tfrac{g(z,0)m_z}{\eps}i+\gamma n_0-\lambda & 0 & 0\\
 \alpha \int_{\T_w} \exp[im_z z] \d z  & -D_h|m|^2-\beta-\lambda & 0 \\
 -\xi n_0 \int_{\T_w}\exp[im_z z] \d z & 0 & -D_n|m|^2-\lambda
\end{pmatrix*} \,.
\]
Here $m = (m_1, m_2, m_3) \in \mathbb{Z}^3$ and $m_z \in \mathbb{Z}$. Then the eigenvalues of the linear system \eqref{linearized} are
  \begin{equation*}
     \lambda_1=-D_n|m|^2, \ \lambda_2=-D_h|m|^2-\beta, \ \lambda_3=-D(z)|m|^2+\tfrac{k_V}{\eps}+\gamma n_0-\tfrac{g(z,0)m_z}{\eps}i \,.
   \end{equation*}
Observe that if $m = 0$, one has $\lambda_1 = 0$ and $\lambda_2 = - \beta < 0$. Moreover, if $|m|^2$ is sufficiently small, the real part $\Re \lambda_3 = - D(z) |m|^2 + \tfrac{k_V}{\eps} + \gamma n_0 > 0$. It is therefore concluded that
\begin{proposition}
	The linearized system \eqref{linearized} is unstable.
\end{proposition}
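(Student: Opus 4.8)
The plan is to carry out a standard normal-mode (spectral) analysis of the linearized operator in \eqref{linearized}. The crucial structural observation is that \eqref{linearized} is lower triangular: the equation for $\delta\rho$ is self-contained, and $\delta\rho$ enters only as a source term in the $\delta h$- and $\delta n$-equations. Consequently the spectrum of the full linearized operator is the union of the spectra of its diagonal blocks, and it suffices to exhibit a single growing mode coming from the decoupled $\delta\rho$-equation. First I would expand $\delta\rho$, $\delta h$, $\delta n$ in Fourier series on $\T^3\times\T_w$ (discrete frequencies $m\in\mathbb{Z}^3$, $m_z\in\mathbb{Z}$) and substitute the normal modes $\widehat{\delta\rho}\,e^{\lambda t+im\cdot x+im_z z}$ etc., using that $\p_z$ becomes multiplication by $im_z$ and $\Delta_x$ by $-|m|^2$, together with $\p_z g(z,0)\equiv -k_V$ from \eqref{equation 1.4}. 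This produces exactly the lower-triangular characteristic matrix displayed above, whose diagonal entries are $\lambda_1$, $\lambda_2$, $\lambda_3$.

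Next I would simply read off the three eigenvalues. Since $\lambda_1=-D_n|m|^2\le 0$ and $\lambda_2=-D_h|m|^2-\beta<0$ are always non-positive, any instability must originate from $\lambda_3=-D(z)|m|^2+\tfrac{k_V}{\eps}+\gamma n_0-\tfrac{g(z,0)m_z}{\eps}i$, whose real part is $\Re\lambda_3=-D(z)|m|^2+\tfrac{k_V}{\eps}+\gamma n_0$. Because $k_V>0$, $\gamma>0$ and $n_0>0$, this quantity is strictly positive whenever $|m|^2$ is small enough, and in particular for $m=0$. The existence of a mode with $\Re\lambda>0$ is then the definition of linear instability, which closes the argument.

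The one point that deserves genuine care — and the only real obstacle — is that the coefficients $D(z)$ and $g(z,0)$ still depend on $z$, so $\lambda_3=\lambda_3(z)$ is not literally an eigenvalue of a constant-coefficient operator, and one should check that the growing solution is honest rather than merely formal. To settle this cleanly I would take the spatially- and internally-homogeneous perturbation $\delta\rho(t,x,z)\equiv c(t)$ (that is, $m=0$ and $m_z=0$). For such $\delta\rho$ one has $\Delta_x\delta\rho=0$ and, since $\p_z g(z,0)\equiv-k_V$, also $\p_z\big(g(z,0)\,\delta\rho\big)=-k_V\,\delta\rho$, so the $z$-dependence of the coefficients drops out entirely and the $\delta\rho$-equation collapses to the scalar ODE $\dot c=\big(\tfrac{k_V}{\eps}+\gamma n_0\big)c$. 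Its solution $c(t)=c(0)\,e^{(k_V/\eps+\gamma n_0)t}$ grows exponentially, and the constant function belongs to every $H^s_{x,z}$, so it is an admissible perturbation of the steady state $(0,0,n_0)$. This exhibits an explicit solution of the linearized system whose norm is unbounded in time, thereby establishing the claimed instability.
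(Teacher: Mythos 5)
Your proposal is correct and takes essentially the same route as the paper: the same normal-mode substitution produces the same lower-triangular characteristic matrix with eigenvalues $\lambda_1,\lambda_2,\lambda_3$, and instability is read off from $\Re \lambda_3 = -D(z)|m|^2 + \tfrac{k_V}{\eps} + \gamma n_0 > 0$ at low spatial frequency. Your closing verification via the mode $(m,m_z)=(0,0)$ — where the $z$-dependence of the coefficients drops out, the formal eigenvalue computation becomes exact, and one obtains the honest growing solution $c(t)=c(0)\,e^{(k_V/\eps+\gamma n_0)t}$ — is a sensible rigor patch addressing the fact that $\lambda_3=\lambda_3(z)$ is only formal for $m\neq 0$ or $m_z \neq 0$, and it plays the same role as the paper's zero-frequency discussion in Remark \ref{Rmk-A1}.
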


\begin{remark}\label{Rmk-A1}
	The instability of \eqref{linearized} can also be understood by another way. The zero frequency $m = 0$ corresponds to the quantities of ``total mass", namely, $$A_\rho (t) = \iint_{ \T^3 \times \T_w } \delta \rho (t,x,z) \d z \d x \,, \ A_h (t) = \int_{ \T^3 } \delta h (t,x) \d x \,, \ A_n (t) = \int_{ \T^3 } \delta n (t,x) \d x \,. $$
	From \eqref{linearized}, there hold
	\begin{equation}\label{ODE-1}
		\left\{
		  \begin{aligned}
		  	& \tfrac{\d}{\d t} A_\rho (t) = \gamma n_0 A_\rho (t) \,, \\
		  	& \tfrac{\d}{\d t} A_h (t) = \alpha A_\rho (t) - \beta A_h (t) \,, \\
		  	& \tfrac{\d}{\d t} A_n (t) = - \xi n_0 A_\rho (t) \,.
		  \end{aligned}
		\right.
	\end{equation}
    Then the ODE system \eqref{ODE-1} can be solved by
    \begin{equation}
    	\left\{
    	  \begin{aligned}
    	  	A_\rho (t) = & A_\rho (0) e^{\gamma n_0 t} \,, \\
    	  	A_h (t) = & [ A_h (0) - \tfrac{\alpha}{\gamma n_0} A_\rho (0) ] e^{- \beta t} + \tfrac{\alpha}{\gamma n_0} A_\rho (0) e^{(\gamma n_0 - \beta) t} \,, \\
    	  	A_n (t) = & A_n (0) + \tfrac{\xi}{\gamma} A_\rho (0) - \tfrac{\xi}{\gamma} A_\rho (0) e^{\gamma n_0 t} \,.
    	  \end{aligned}
    	\right.
    \end{equation}
	Once $n_0 > 0$ and $A_\rho (0) \neq 0$, $A_\rho (t)$ and $A_n (t)$ are unstable. Moreover, if $\gamma n_0 > \beta$, $A_h (t)$ is also unstable.
\end{remark}

\subsection{Linear instability of the \eqref{AD-EECP} model}
 In this subsection, the main goal is to consider the instability of the linearized \eqref{AD-EECP} system around $(0, 0, n_0)$ with constant $n_0 > 0$. Define a fluctuation
 \begin{align}\label{decomposition2}
   \vr (t,x) = \delta \vr (t,x) \,, \ h (t,x) = \delta h (t,x) \,, \ n (t,x) = n_0 + \delta n (t,x) \,.
  \end{align}
Then the linearized \eqref{AD-EECP} system reads
\begin{equation}\label{linearizedA2}
  \partial_t
  \left(
    \begin{array}{c}
    	\delta \vr \\
    	\delta h \\
    	\delta n
    \end{array}
  \right) =
  \left(
    \begin{array}{ccc}
    	\widetilde{D} (0) \Delta_x + \gamma n_0 & 0 & 0 \\
    	\alpha & D_h \Delta_x - \beta & 0 \\
    	- \xi n_0 & 0 & D_n \Delta_x \\
    \end{array}
  \right)
  \left(
    \begin{array}{c}
  	  \delta \vr \\
  	  \delta h \\
  	  \delta n
    \end{array}
  \right) \,.
\end{equation}
It is easy to see that the corresponding characteristic matrix is
\[
\begin{pmatrix*}[c]
  -\widetilde{D}(0)|m|^2+\gamma n_0-\lambda & 0 & 0\\
 \alpha & -D_h|m|^2-\beta-\lambda & 0 \\
 -\xi n_0 & 0 & -D_n|m|^2-\lambda
\end{pmatrix*} \,.
\]
The eigenvalues thereby are
\begin{equation*}
 \lambda_1 = - D_n |m|^2 \leq 0 \,, \ \lambda_2 = - D_h |m|^2 - \beta < 0 \,, \ \lambda_3 = - \widetilde{D}(0) |m|^2 + \gamma n_0 \,.
\end{equation*}
Note that if $|m|^2$ is small enough, $\lambda_3 > 0$. It thereby infers that
\begin{proposition}
	The linearized AD-EECP system \eqref{linearizedA2} is unstable.
\end{proposition}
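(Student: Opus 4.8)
The plan is to exhibit a Fourier normal mode of the linearized system \eqref{linearizedA2} whose temporal exponent has strictly positive real part; by definition this establishes linear instability. First I would Fourier-transform \eqref{linearizedA2} in the spatial variable $x \in \T^3$, so that each operator $\Delta_x$ becomes multiplication by $-|m|^2$ for a wavenumber $m \in \mathbb{Z}^3$, and seek solutions of normal-mode form $(\widehat{\delta \vr}, \widehat{\delta h}, \widehat{\delta n})\,e^{\lambda t + i m \cdot x}$. This reduces the constant-coefficient PDE system to the family of algebraic eigenvalue problems governed by the characteristic matrix displayed just above the statement, parameterized by $|m|^2$ and the model coefficients.

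The crucial structural observation is that this characteristic matrix is lower triangular: at the linear level around $(0,0,n_0)$ the perturbation $\delta \vr$ feeds into the $\delta h$- and $\delta n$-equations, but there is no feedback into the $\delta \vr$-equation itself. Hence its spectrum consists exactly of the diagonal entries $\lambda_1 = -D_n|m|^2$, $\lambda_2 = -D_h|m|^2 - \beta$, and $\lambda_3 = -\widetilde{D}(0)|m|^2 + \gamma n_0$, with no determinant expansion required.

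Next I would isolate $\lambda_3$ and use the hypotheses to force its positivity. Since $\widetilde{D}(0) \geq d > 0$ by $(\mathrm{H}2)$ and $\gamma, n_0 > 0$, the bound $\lambda_3 = \gamma n_0 - \widetilde{D}(0)|m|^2 > 0$ holds for every admissible wavenumber with $|m|^2 < \gamma n_0 / \widetilde{D}(0)$. In particular the zero mode $m = 0$ is always allowed on the periodic torus $\T^3$ and yields $\lambda_3 = \gamma n_0 > 0$, so the associated normal mode amplifies like $e^{\gamma n_0 t}$. This furnishes the desired unstable direction of the linear evolution and completes the argument.

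There is no substantial analytical obstacle here: the triangular structure renders the spectral computation immediate, and the only point meriting attention is the admissibility of the destabilizing frequencies on $\T^3$. This is exactly why one selects $m = 0$, which is always a valid frequency, so that the instability already manifests at the level of the spatial averages $\int_{\T^3}\delta\vr\,\d x$, consistent with the ODE picture recorded for the kinetic model in Remark \ref{Rmk-A1}.
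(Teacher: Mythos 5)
Your proposal is correct and follows essentially the same route as the paper: Fourier normal modes reduce \eqref{linearizedA2} to the lower triangular characteristic matrix, whose diagonal entries give $\lambda_1 = -D_n|m|^2$, $\lambda_2 = -D_h|m|^2-\beta$, and $\lambda_3 = -\widetilde{D}(0)|m|^2 + \gamma n_0$, with instability from $\lambda_3 > 0$ at small $|m|^2$. Your explicit choice of the always-admissible zero mode $m=0$ on $\T^3$ (with growth rate $\gamma n_0$, matching the total-mass ODE picture of Remark \ref{Rmk-A1}) is a small but welcome sharpening of the paper's phrase ``if $|m|^2$ is small enough,'' since the torus frequencies are discrete.
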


\begin{remark}
	As similarly discussed in Remark \ref{Rmk-A1}, one also can focus on the so-called ``total mass"
	\begin{equation*}
		\begin{aligned}
			M_\vr (t) = \int_{ \T^3 } \delta \vr (t,x) \d x \,, \ M_h (t) = \int_{ \T^3 } \delta h (t,x) \d x \,, \ M_n (t) = \int_{ \T^3 } \delta n (t,x) \d x \,,
		\end{aligned}
	\end{equation*}
    which correspond to the zero frequency case. Observe that $(M_\vr, M_h, M_n)$ solves the ODE system \eqref{ODE-1}. Then the same analysis in Remark \ref{Rmk-A1} concludes that \eqref{linearizedA2} is unstable if $n_0 > 0$ and $M_\vr (0) \neq 0$.
\end{remark}

\section*{Acknowledgments}

The author N. Jiang is supported by grants from the National NSFC under contract Nos. 11971360 and 11731008. Y.-L. Luo is supported by the Starting Research Fund from South China University of Technology (Double First-Class Construction Project) under contract No. D6211300. M. Tang and Y. M. Zhang are partially supported by NSFC 11871340.

\bibliography{reference}

\end{document}